\documentclass[12pt,oneside]{amsart}

\usepackage{persomath}
\usepackage{persobase_patched}

\AtBeginDocument{\selectlanguage{english}}

\calclayout

\usepackage{nicefrac,setspace}
\usepackage{xcolor}
\usepackage{bm}
\usepackage{booktabs}
\usepackage{microtype}

\newcommand{\Spec}{{\rm Spec}}

\newtheorem{defn}[defi]{Definition}

\newtheorem*{theorem*}{Theorem}
\newtheorem*{lemma*}{Lemma}
\newtheorem*{corollary*}{Corollary}

\newcommand{\showcomments}{no}
\renewcommand{\showcomments}{yes}

\newsavebox{\commentbox}
{\ifthenelse{\equal{\showcomments}{yes}}%
{\footnotemark
    \begin{lrbox}{\commentbox}
    \begin{minipage}[t]{1.25in}\raggedright\sffamily\tiny
    \footnotemark[\arabic{footnote}]}
{\begin{lrbox}{\commentbox}}}%
{\ifthenelse{\equal{\showcomments}{yes}}%
{\end{minipage}\end{lrbox}\marginpar{\usebox{\commentbox}}}
{\end{lrbox}}}

\newenvironment{problem}{\begin{prob}}{\end{prob}}

\subjclass[2020]{
20E07, % Subgroup theorems; subgroup growth
20F67, % Hyperbolic groups and nonpositively curved groups
05C80, % Random graphs
20E05, % Free nonabelian groups
}
\keywords{Growth spectrum, free groups, surface groups, hyperbolic groups, non-backtracking matrix, configuration model, growth gap}
\date{\today}

\thanks{
Rémi Coulon acknowledges support from the Agence Nationale de la Recherche under the Grant \emph{GoFR} ANR-22-CE40-0004 and from the Région Bourgogne-Franche-Comté under the grant \emph{ANER 2024 GGD}.
His institute receives support from the EIPHI Graduate School (contract ANR-17-EURE-0002).
Daniel T. Wise's research is supported by the ISF}

\title[Growth spectrum of hyperbolic groups]{On the growth spectrum of hyperbolic groups}

\author{R\'emi Coulon}
\address{Universit\'e Bourgogne Europe, CNRS, IMB UMR 5584, 21000 Dijon, France}
\email{remi.coulon@cnrs.fr}

\author{Michail Louvaris}
\address{Dept of Math, Yale, New Haven, USA}
\email{michail.louvaris@yale.edu}

\author{Daniel T. Wise}
\address{Dept. of Math, Weizmann Institute, Rehovot, Israel}
\email{daniel.wise@weizmann.ac.il}

\author{Gal Yehuda}
\address{Dept of Math, Yale, New Haven, USA}
\email{gal.yehuda@yale.edu}

\begin{document}

\begin{abstract}
We study the growth spectrum of groups acting on hyperbolic spaces, i.e.\ the set of exponential growth rates achieved by subgroups.
For a finitely generated free group or a surface group acting convex-cocompactly on a proper geodesic hyperbolic metric space, we prove that the growth spectrum is the full interval $[0, \omega_G]$.
For any hyperbolic group, we prove that the growth spectrum contains a large interval $[0, \omega_{\mathcal{F}}]$ where $\omega_{\mathcal{F}} \geq \omega_G / 2$, with strict inequality when the action is divergent.
In the case of the Cayley graph of a free group, we also present an approach via the non-backtracking matrix of the configuration model, connecting the density of growth rates to a spectral concentration result for random graphs.
\end{abstract}

\maketitle

\tableofcontents

% -------------------------------------------------------
\section{Introduction}
% -------------------------------------------------------

% \begin{com}
% 	RC. I revised the motivations.
% 	We indeed focus on the growth rate, not on the growth function. 
% 	There are plenty enough good reasons to look at growth rates.
% \end{com}

%For a finitely generated group $G$ and a set of generators $S$ with Cayley graph $\Upsilon = \Upsilon_{G,S}$, let $f(n)$ denote the number of elements of length $\leq n$.
%The \emph{exponential growth rate} of $G$ with respect to $\Upsilon$ is defined as
%$$\omega(G, \Upsilon) = \lim_{n\rightarrow \infty}\frac{1}{n}\ln f(n).$$
Growth in groups is a topic that has driven many developments in geometric group theory with striking achievements such as Gromov's characterization of groups with polynomial growth or Grigorchuk's examples of groups with intermediate growth.
In this article we are interested in groups with exponential growth.
Let $(X, \distV)$ be a metric space and $G$ a group acting properly, by isometries on $X$.
% \begin{com}
% RC. I removed the preliminary definition of a Cayley graph.
% It is indeed verbatim the same.
% There is no need to give twice the same definition.
% \end{com}
The \emph{exponential growth rate of $G$ with respect to $X$}, which measures the ``size'' of its orbits, is defined as
\begin{equation*}
	\omega(G,X) = \limsup_{r \to \infty} \frac 1r \ln \card{\set{g \in G}{\dist x{gx} \leq r}},
\end{equation*}
where $x$ is a point in $X$.
Alternatively, it is the critical exponent of the Poincar\'e series
\begin{equation*}
	\mathcal P_G(s,x) = \sum_{g \in G} e^{-s\dist x{gx}}.
\end{equation*}
This quantity does not depend on $x \in X$, but does depend on the space $X$.
Nevertheless, if there is no ambiguity, we simply denote it by $\omega_G$.
For instance, the rank~$r$ free group $F_r$ has growth rate $\omega( F_r, X) = \ln(2r-1)$ when $X$ is a Cayley graph corresponding to a free generating set.
%Finitely generated groups with polynomial growth were famously characterized by Gromov as those which are virtually nilpotent.
%Since non-elementary word-hyperbolic groups contain a copy of a rank~2 free group, they have exponential growth-rates, i.e.\ $\omega_G>0$ for any geometric action on a metric space $X$, and studies have been made of their growth-rates and the growth of their conjugacy classes, with a main result being that $ce^{\omega n} < f(n) < Ce^{\omega n}$ in this case \cite{CoornaertKnieper2002}.
%There are various surprises about the nature of the growth function $f$; for instance, it is a rational function when $G$ is word-hyperbolic or more generally a geodesically automatic group \cite{epstein1992word}.
If $G$ is a torsion-free discrete convex cocompact group of isometries of $X = \mathbb H^{n+1}$, then $\omega(G, X)$ has numerous interpretations that make it a central object in Riemannian geometry:
not only does it measure the asymptotic behavior of the orbits of $G$, it is also the entropy of the geodesic flow on the unit tangent bundle of the quotient manifold $M = X / G$, the Hausdorff dimension of the limit set $\Lambda(G) \subseteq \partial X$ of $G$, it is also related to the bottom of the spectrum of the Laplace-Beltrami operator on $M$, etc.

Given a collection $\mathcal H$ of subgroups of $G$, its \emph{growth spectrum} is
\begin{equation*}
	{\rm Spec}(\mathcal H,X) = \set{\omega(H, X)}{H \in \mathcal H}.
\end{equation*}
This set is contained in $\intval 0{\omega_G}$.
When $\mathcal H$ is the collection of all subgroups of $G$, we simply denote it by ${\rm Spec}(G,X)$.
There are interesting relationships between the growth rate of $G$ and growth rates of its subgroups.
Of course a finite index subgroup $G'$ of $G$ has the same growth-rate as $G$, so the focus is on infinite index subgroups of $G$.

A classical result on this topic is given by Corlette \cite{Corlette:1990br}:
if $G$ is a lattice in quaternionic or octonionic hyperbolic space then $G$ has a \emph{growth-gap} in the sense that there exists $\omega< \omega_G$ such that $\omega(H,X)\leq \omega$ for each infinite index subgroup $H\subset G$.
This has been generalized to a group $G$ with \emph{Property~$(T)$}, which means that every isometric affine action on a Hilbert space has a global fixed point.
Inspired by the work of Dougall--Sharp \cite{dougall2016amenability}, it was shown in \cite{Coulon:2018aa, coulonTwistedPattersonSullivanMeasures2025} that every hyperbolic group with Property~$(T)$ has a growth-gap. 

For a hyperbolic group, $\omega(H,X)< \omega_G$ whenever $H$ is an infinite index quasiconvex subgroup of a hyperbolic group $G$ \cite{DahmaniFuterWise2019}.
% \begin{com}
% 	RC. Removed the definition of quasi-convex subgroup.
% 	I think this is a sufficiently well known concept in GGT.
% \end{com}
However, given a finitely generated free group $F$, there is a sequence $(H_n)$ of infinite index finitely generated subgroups such that $\omega(H_n,X)$ converges to $\omega_F$. 
Hence $F$ does not have a growth gap.
This no-growth-gap result was generalized from free groups to fundamental groups of compact special cube complexes in \cite{LiWise2020}. 

\subsection{Main results}

A natural problem is to describe ${\rm Spec}(G,X)$: does it cover the whole interval $\intval 0{\omega_G}$?
In this paper we prove several results on the growth spectrum of free groups, surface groups, and more generally, hyperbolic groups.

\begin{mthm}
\label{res: main new}
	Let $G$ be a finitely generated free group or a closed surface group acting properly, by isometries on a proper, geodesic, hyperbolic metric space $(X,d)$.
	Suppose that this action is convex-cocompact.
	Then ${\rm Spec}(G,X) = \intval 0{\omega_G}$.
	Moreover, if $\mathcal F$ stands for the collection of all finitely generated free subgroups of $G$, then ${\rm Spec}(\mathcal F, X)$ is dense in $\intval 0{\omega_G}$.
\end{mthm}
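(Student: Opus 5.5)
The plan is to reduce to density plus a closure argument. First, the density statement: for finitely generated free subgroups, I would show that $\set{\omega(H,X)}{H \in \mathcal F}$ is dense in $\intval 0{\omega_G}$. Since $G$ is free or a closed surface group, it contains finite index subgroups that are free (if $G$ is free) or, for surface groups, quasiconvex free subgroups of infinite index whose growth rates approach $\omega_G$. The latter holds because $G$ is virtually special, or via subsurfaces obtained by removing a small disc from a finite cover. The first step is therefore to produce finitely generated free, quasiconvex subgroups $F$ with $\omega(F,X)$ arbitrarily close to $\omega_G$. The second step is to interpolate downward inside such an $F$. Each finitely generated subgroup $H \le F$ is represented by a finite core graph immersing into the rose, i.e.\ a Stallings graph. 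Its growth rate with respect to $X$ is governed by a transfer operator on the immersed graph, which is quasi-isometrically encoded using the quasiconvexity of $F$ in $X$. I would find a chain of finitely generated subgroups $H_0 \le H_1 \le \dots \le H_N = F'$, where $F'$ has finite index in $F$, such that consecutive growth rates differ by at most $\varepsilon$. To build the chain, I would take a large finite cover of the rose, of girth large, and remove edges one at a time. Removing a single edge from a finite graph with huge girth changes the critical exponent only slightly; this is a continuity estimate that I would prove by comparing Poincaré series. Paths in the smaller graph inject into paths of the larger one, and the extra paths through a single edge in a high-girth graph contribute a proportion controlled by the Perron eigenvector mass at that edge, which is small when the cover is large and close to "uniform." This yields $\varepsilon$-dense values from near $0$ (a cyclic subgroup, rate $0$) up to near $\omega_G$.

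Second, to obtain every value in $\intval 0{\omega_G}$, and not just a dense set, I would use infinitely generated subgroups as limits. Given a target $\omega \in (0,\omega_G)$, I would construct an increasing union $H = \bigcup_n H_n$ of finitely generated free subgroups, with $H_n$ a free factor of $H_{n+1}$ (Stallings folding makes nested graphs possible), and with $\omega(H_n,X) \nearrow \omega$. Growth rate is lower semicontinuous along increasing unions, and in fact $\omega(H) = \sup_n \omega(H_n)$. The reason is that for $s > \sup_n\omega(H_n)$ the Poincaré series of $H$ is the monotone limit of those of the $H_n$, provided their series are uniformly bounded at $s$. I would therefore arrange each step $H_n \subset H_{n+1}$ to add only a small amount to $\mathcal P_{H_n}(s,x)$ for every $s$ above the target. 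The endpoint $0$ is realized by the trivial or cyclic subgroup, and $\omega_G$ by $G$ itself.

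The main obstacle is the uniform control in the second step: an increasing union of subgroups could have a strictly larger critical exponent than the supremum, because $\sup_n \mathcal P_{H_n}(s)$ may diverge even when each series converges. To handle this, I would choose the additions at step $n$ to be attached far out, using new generators of length $L_n \to \infty$ rapidly, chosen in a quasiconvex free subgroup so that the ping-pong/gluing is geodesic up to bounded error. Then I would bound
\begin{equation*}
\mathcal P_{H_{n+1}}(s) \le \mathcal P_{H_n}(s)\bigl(1 + C e^{-(s-\omega_{n+1})L_n}\cdots\bigr),
\end{equation*}
making the product of these factors converge for every $s$ greater than the target. Combined with the density result, which gives free $H_n$ with rates approaching $\omega$ from below, while the extensions keep rates within $\varepsilon_n$, this yields $\omega(H,X) = \omega$.
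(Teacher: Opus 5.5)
Your architecture (density first, then far-out ping-pong and increasing unions) matches the paper. However, the density step rests on an estimate you have not established, and the mechanism you suggest does not give it.

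Deleting an edge $e$ replaces $\pi_1\Gamma$ by a proper free factor, so there is no elementwise comparison of orbit lengths between the two subgroups. The natural comparison, rerouting each crossing of $e$ through $\Gamma\setminus e$, adds at least $N-2$ per crossing, and a loop of length $n$ can cross $e$ about $n/N$ times. So it can double lengths and need not even be injective: girth alone gives a factor $2$ there, not $1+O(1/N)$.

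Your Perron-eigenvector heuristic belongs to the tree metric, where $\omega=\ln\lambda_1(M_\Gamma)$. Even there it is a first-order perturbation argument. You would need it uniformly for arbitrary irregular graphs of girth $\geq N$, since after a few deletions the graph is no longer a regular cover. For a general convex-cocompact action there is no transfer matrix computing $\omega(H,X)$: a quasi-isometric encoding determines growth only up to a multiplicative constant.

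The missing idea is to keep the graph fixed and vary the labels. On a $2r$-regular graph of girth $\geq N$, label each edge $e$ by $w_0(e)^{k(e)}$. Changing one weight by one gives isomorphic subgroups. A reduced loop of length $n$ crosses $e_0$ at most $(n-1)/N+1$ times, and each crossing shifts the orbit point by a bounded amount (Morse lemma). Hence $d(o,\phi(h)o)\le(1+C_1/N)\,d(o,\phi'(h)o)+C_2$, so $|\omega_k-\omega_{k'}|\le C/N$ for any $X$. Lowering the weights one edge at a time, from a large constant weight $m$ (growth $O(1/m)$) to constant weight one (finite index), gives a $C/N$-dense chain.

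The surface case has a second gap: neither justification produces free quasiconvex subgroups with growth close to $\omega_G$. Removing a disc $D$ from a finite cover $S'$ gives no subgroup at all, since $\partial D$ is null-homotopic and $\pi_1(S'\setminus D)\to\pi_1S'$ is surjective, not injective. Virtual specialness (Li--Wise) concerns the cubical or word metric, and growth rates are not quasi-isometry invariant; whether such statements are independent of the metric is listed in the paper as an open problem. The paper instead uses that $N=[G,G]$ satisfies $\omega_N=\omega_G$ because $G/N$ is abelian. It then writes $N$ as an increasing union of finitely generated subgroups $N_k$, which have infinite index and so are free and quasiconvex, and gets $\omega(N_k)\to\omega_G$ from lower semicontinuity.

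That same semicontinuity lemma removes what you call the main obstacle in your second step. A weak-$*$ limit of normalized $\omega(H_n)$-conformal, $H_n$-invariant densities is an $H$-invariant, $\lim_n\omega(H_n)$-conformal density. Hence $\omega(\bigcup_n H_n)=\lim_n\omega(H_n)$ for every increasing sequence with non-elementary union. Your uniform Poincar\'e-series bookkeeping is therefore unnecessary. You only need the ping-pong estimate keeping each $\omega(H_n)$ below the target, which is the paper's growth-controlled argument.
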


In the special case where $G = F_r$ is a free group and $X$ is its Cayley graph with respect to a free basis, \autoref{res: main new} states that the set of growth-rates of finitely generated subgroups of $F_r$ is dense in $[0, \ln(2r-1)]$.
We present two proofs of the density part of \autoref{res: main new} for free groups.
The first, given in \autoref{sec:eigenvalue_approach}, uses the non-backtracking matrix of a model for random graphs and relies on a probabilistic concentration result (\autoref{thm:probabilistic_claim}).
The second, given in \autoref{sec:density_free_group}, is purely geometric and also works for any convex-cocompact action on a hyperbolic space.

\medskip
In view of Corlette's result it is natural to ask whether there are groups with \emph{several} gaps in their growth spectrum.
Although we do not fully answer this question, we prove that there is no growth gap in the lower half of the growth spectrum of hyperbolic groups.
Recall that the action of a group $G$ on a metric space $X$ is \emph{divergent} if its Poincar\'e series $\mathcal P_G(s,x)$ diverges at $s = \omega_G$ for some (hence every) $x \in X$.

\begin{mthm}
\label{res: free subgroup spectrum}
	Let $(X,d)$ be a proper, geodesic, hyperbolic space.
	Let $G$ be a group acting properly, by isometries on $X$.
	Denote by $\mathcal F$ the collection of all free subgroups of $G$ which are convex-cocompact (for the action on $X$).
	There is $\omega_{\mathcal F} \in \R_+$ such that 
	\begin{enumerate}
		\item \label{enu: free subgroup spectrum - density}
		${\rm Spec}(\mathcal F,X)$ is dense in $\intval 0{\omega_{\mathcal F}}$;
		\item \label{enu: free subgroup spectrum - realized}
		$\intval 0{\omega_{\mathcal F}}$ is contained in ${\rm Spec}(G,X)$,
		\item \label{enu: free subgroup spectrum - 1/2}
		$\omega_\mathcal F \geq \omega_G / 2$,  with a strict inequality if the action of $G$ is divergent.
	\end{enumerate}	
\end{mthm}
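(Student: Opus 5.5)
Set $\omega_{\mathcal F} = \sup {\rm Spec}(\mathcal F, X)$; the three items then become concrete claims about this supremum. We assume $G$ is non-elementary; otherwise the statement is trivial with $\omega_{\mathcal F} = 0$.

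\emph{Density (item \ref{enu: free subgroup spectrum - density}).} Fix a convex-cocompact free subgroup $H$ with $\omega(H,X)$ close to $\omega_{\mathcal F}$. Let $T$ be the Cayley tree of $H$ for a free basis. The orbit map $T \to X$ is a quasi-isometric embedding, so growth rates of subgroups of $H$ measured in $X$ can be computed through a weighted length on $T$. We then argue inside $H$, as in the geometric proof of the density part of \autoref{res: main new} (\autoref{sec:density_free_group}). Take finitely generated subgroups $K \le H$ given by finite covers or subgraphs of a rose. Deleting or adding one long edge (a generator $h$ with large displacement) changes the growth rate by an amount that tends to $0$ as the displacement tends to infinity; this needs a continuity estimate for the critical exponent of the Poincaré series when a ``small'' free factor is added. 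Starting from the trivial group and from subgroups with growth rate close to $\omega(H,X)$, we interpolate by such small steps. This yields an $\varepsilon$-net of values in $[0,\omega(H,X)]$, and hence density in $[0,\omega_{\mathcal F}]$. Every subgroup of a finitely generated convex-cocompact free group that is built this way is again free and convex-cocompact, because it is quasiconvex in $H$.

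\emph{Realization (item \ref{enu: free subgroup spectrum - realized}).} To get every value $\omega \in [0,\omega_{\mathcal F}]$ exactly, we build an increasing union $H_1 < H_2 < \cdots$ of free factors of a convex-cocompact free group. Each $H_{n+1}$ is obtained from $H_n$ by adding a generator with huge displacement, chosen so that $\omega(H_n,X) \nearrow \omega$. The limit $H = \bigcup H_n$ is an infinitely generated free group. Since the Poincaré series is monotone, $\omega(H,X) \ge \lim \omega(H_n)$. For the reverse inequality we choose the displacements of the new generators growing fast enough that the Poincaré series of $H$ at any $s > \omega$ is a convergent perturbation of those of the $H_n$. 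This works because, for a free product with ping-pong, the series of $H$ is controlled by sums of products of the series of the factors.

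\emph{The bound $\omega_{\mathcal F} \ge \omega_G/2$ (item \ref{enu: free subgroup spectrum - 1/2}), which is the main obstacle.} The plan is a ping-pong/Schottky construction using many elements. Fix two independent loxodromics $a,b$ giving cone-type separation. For large $r$, the annulus $S_r = \{g : r-\ell \le \dist x{gx} \le r\}$ has about $e^{\omega_G r}$ elements. Greedily select a subset $S'_r \subset S_r$ in which any two elements $g,h$ satisfy $(g^{-1}x \mid h^{-1}x)_x$ and $(gx \mid hx)_x$ at most about $r/2$. Modifying by the prefixes $a$ and $b$ handles the inverse condition. By a counting argument with balls of radius $r/2$, we can keep roughly $e^{\omega_G r/2}$ elements. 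With Gromov products bounded by $r/2 - C$ and lengths $r$, the local-to-global principle for quasi-geodesics shows that $S'_r$ freely generates a convex-cocompact free group. Each reduced word of length $n$ has displacement at most $nr$, so the growth rate is at least $\ln|S'_r|/r \to \omega_G/2$.

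For the strict inequality under divergence, we use that divergence gives a Patterson--Sullivan measure without atoms of full shadow type. Shadow estimates then show that the number of pairs in $S_r$ whose Gromov product exceeds $(1/2-\delta) r$ is a negligible fraction. This lets us take overlap threshold $(1/2+\delta)r$ while still selecting $e^{(\omega_G/2+\delta')r}$ elements, with the cancellation per letter only $(1/2-\delta)r$. The word-length estimate then becomes displacement $\ge n(1-2\cdot\text{overlap})$ balanced against the count, so a better tradeoff beats $1/2$. Making this quantitative is the delicate point, and I would first try it through the Bishop--Jones-type lower bound for shadows of a divergent group.
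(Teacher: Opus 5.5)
\textbf{Item~(1).} This part is fine and matches the paper. Take a convex-cocompact free $H$ with $\omega(H,X)$ close to $\omega_{\mathcal F}$ and apply the density part of \autoref{res: main new} to $H$ acting on $X$. The add/delete-a-long-edge mechanism you sketch is not needed, and by itself it does not connect the two ends, since the edges of a finite cover of the rose are short. \autoref{sec:density_free_group} gets small steps instead by lengthening one edge at a time of a large-girth regular graph.

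\textbf{Item~(3).} Your greedy Schottky construction is a plausible elementary route to the non-strict bound for cocompact actions. It has two genuine problems.

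First, the count ``about $e^{\omega_G r/2}$ elements of $S_r$ share a given half-prefix'' needs the point at distance $r/2$ on $[x,gx]$ to lie near the orbit $Gx$. This fails for general proper actions (for instance with parabolics), and the theorem is stated for arbitrary proper actions.

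Second, and decisively, the construction is capped at $\omega_G/2$, so it cannot give the strict inequality. For a free basis every pair of letters occurs consecutively in some reduced word. So the local-to-global step needs \emph{all} Gromov products among the points $gx, g^{-1}x$ ($g\in S'_r$) to be at most some $t<r/2$; a small fraction of pairs with larger overlap already breaks it. But then the points at distance $t+C$ on the segments $[x,gx]$ are $(2C-4\delta)$-separated. Hence $\#S'_r\leq e^{(\omega_G+o(1))t}$, and your lower bound $\ln\#S'_r/r$ never exceeds $\omega_G/2+o(1)$. Your ``better tradeoff'' also runs the wrong way. A lower bound $n(r-2t)$ on displacements of words of length $n$ gives an \emph{upper} bound on growth; lower bounds on growth come only from the upper bound $nr$.

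The missing idea is the paper's. For a loxodromic $g$ and $n$ large, rotating families show that the normal closure $N=\langle\langle g^n\rangle\rangle$ is free. It is an increasing union of finitely generated convex-cocompact subgroups $H_k\in\mathcal F$, so lower semicontinuity (\autoref{res: semicontinuity}) gives $\omega_N\leq\liminf_k\omega(H_k)\leq\omega_{\mathcal F}$. The known theorem on infinite normal subgroups then gives $\omega_N\geq\omega_G/2$, strictly when the action is divergent. Freeness of $N$ comes from the rotation structure, not from small cancellation between full-length generators, and that is exactly what lets it beat your barrier.

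\textbf{Item~(2).} This also has a gap. Adding one generator of huge displacement $D$ raises the growth only by an amount tending to $0$ with $D$; from the trivial group, two such generators give growth $O(1/D)$. So nothing in your scheme forces $\omega(H_n,X)$ to climb to $\omega$, and keeping displacements large to control the limit works directly against that climb.

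What is needed at each step is to absorb a conjugate of some $K_{n+1}\in\mathcal F$ whose growth is already close to $\omega$ (this is where item~(1) is used), while keeping the growth below $\omega$. This is the growth-controlled property of \autoref{res: free product}. After conjugating by a high power of a loxodromic, ping-pong shows the new group is an amalgam over the finite maximal elliptic normal subgroup. Its Poincar\'e series is bounded, up to constants, by $\sum_k\bigl[\mathcal P_{M_n}(s)\,\mathcal P_{K_{n+1}}(s)\,e^{-4s\ell}\bigr]^k$, which converges for every $s$ above both growth rates once $\ell$ is large. No tuning of displacements is then needed for the exact value. Since $M_n\to\bigcup_n M_n$ in the Chabauty topology, monotonicity and \autoref{res: semicontinuity} give $\omega(\bigcup_n M_n,X)=\lim_n\omega(M_n,X)=\omega$ (\autoref{res: sup realized}).

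\textbf{Elementary case.} This case is not trivial as you claim. For parabolic $G$, $\omega_G$ can be positive while $\mathcal F$ is trivial. Non-elementarity is therefore an implicit hypothesis, as in the paper's proof.
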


\subsection{The eigenvalue approach for the Cayley graph}\label{sub:eigenvalue_approach_intro}

The first approach to the density part of \autoref{res: main new} in the case of a free group acting on its Cayley graph (with respect to a free basis) is via the spectral theory of the non-backtracking matrix.
The free group $F$ of rank $r$ is identified with $\pi_1 B_r$ where $B_r$ is a bouquet of $r$ circles.
The universal cover of $B_r$ is a tree $X$  isomorphic to the Cayley graph of $F$ with respect to the basis of $F$ given by the loops of $B_r$.
Subgroups of $F$ are studied via \emph{immersions} $A\rightarrow B_r$, which are locally injective maps \cite{Stallings83}.
Every finitely generated subgroup $H\subset F$ arises as the $\pi_1$-image of such an immersion with compact domain.

For each $\omega\in [0,\ln(2r-1)]$ it is easy to imagine subtrees $Y$ of $X$ whose growth-rate is $\omega$: one simply chooses $Y$ so that the average degrees of sufficiently large balls approach $e^\omega+1$.
The challenge is to realize such subtrees as universal covers of compact graphs that admit immersions into $B_r$.
By Leighton's theorem \cite{AngluinGardiner1981}, every finite graph of degree at most $2r$ admits an immersion into $B_r$, so the problem reduces to finding graphs with the right growth.

The key tool is the \emph{non-backtracking matrix} $M_\Gamma$ of a graph $\Gamma$, a variation of the adjacency matrix of $\Gamma$.
It is a matrix indexed by oriented edges whose power $M_\Gamma^k$ ``counts'' the number of non-backtracking paths of length $k$ in $\Gamma$.
Therefore its leading eigenvalue $\lambda_1(M_\Gamma)$ satisfies the following property :  $\ln \lambda_1(M_\Gamma)$ equals $\omega(H,Y)$, where $H = \pi_1\Gamma$ and $Y$ is the universal cover of $\Gamma$ \cite{angel2015non}.
An immersion $\Gamma \to B_r$ induces an equivariant isometry from $Y$ to $X$.
Hence $\ln \lambda_1(M_\Gamma)$ is also the growth rate of $H$ seen as subgroup of $F$.
Using a probabilistic concentration result -- see \autoref {thm:probabilistic_claim} proved in \cite{louvaris2024density} -- one can produce graphs $\Gamma$ with a precise control on the leading eigenvalue of their non-backtracking matrix, hence subgroups with dense growth rates in $[0, \omega_F]$.
The details of this strategy are sketched in \autoref{sec:eigenvalue_approach}.
Once the density part is established, the continuity part of \autoref{res: main new} for the free group $F$ acting on $X$ is also proven \cite{louvarisSubgroupsFreeGroup2025}. 

\subsection{The geometric approach}
\label{sub:geometric_approach_intro}
The second approach is purely geometric.
It starts with the same idea though.
Assume for simplicity that $X$ is still the Cayley graph of the free group $F$ with respect to a free basis.
Each finitely generated subgroup of $F$ is represented by the immersion of a finite graph in $B_r$.
Consider now a $2r$-regular graph $\Gamma_0$ with a very large girth $N$.
Since $\Gamma_0$ is regular, any subgroup $H_0$ of $F$ obtained from an immersion of $\Gamma_0$ in $B_r$ will have finite index, and thus $\omega_{H_0} = \omega_F$.
Now produce by induction a sequence of graphs $(\Gamma_n)$ where $\Gamma_{n+1}$ is obtained from $\Gamma_n$ by subdividing one edge.
After choosing immersions in $B_r$, it provides a sequence of finitely generated subgroups $(H_n)$ of $F$.
Since the original group $\Gamma_0$ had a large girth, we can prove that at each step the growth rate of $H_n$ decreases by at most by $\omega_F/N$.
If in the process we have inductively subdivided sufficiently many times \emph{every} edge of the original graph $\Gamma_0$, then the resulting group $H_n$ can have an arbitrary small growth rate.
In this way the set $\{\omega(H_n), n \in \N\}$ approximates, up to an error $1/N$ every number in $[0, \omega_F]$.
Since the girth $N$ of $\Gamma_0$ can be chosen arbitrarily large, this gives arbitrarily fine control over the growth rates, yielding the density part of \autoref{res: main new} without any probabilistic tool.
This argument can be extended to the general situation where $X$ is a hyperbolic geodesic metric space, with a convex-cocompact action of the group $F$.

In this geometric setting, the continuity part of \autoref{res: main new} for the free group $F$ can be obtained by combining a ping-pong argument with the lower semi-continuity of the map assigning to each subgroup its growth rate.
% \begin{com}
% 	RC. I expanded a bit that part, since it is the core of the paper!
% \end{com}

\subsection{Open problems}

Given a group $G$ acting on a metric space $X$ we say that the \emph{growth density} holds if the set of growth rates of finitely generated subgroups of $G$ is dense in $[0, \omega_G]$.
% \begin{com}
% 	RC. I shorten the name of the property to \emph{growth density}.
% 	Indeed the terminology was not always consistent in the questions below.
% \end{com}

\begin{problem}
Does growth density hold for $A*B$ if it holds for $A$ and $B$?
What properties on $A,B$ would enable this?
\end{problem}

\begin{problem}
Does growth density hold for $\pi_1X$ when $X$ is compact special and $\pi_1X$ is non-elementary (relatively) hyperbolic?
\end{problem}

\begin{problem}
Does growth density hold for $\pi_1M^n$ for any classical hyperbolic $n$-manifold with $n\geq 3$?
\end{problem}

Recall that when $G$ is a word-hyperbolic group with Property~$(T)$, there is a gap $(\omega_G-\epsilon, \omega_G)$ at the top of the range of possible growth exponents of subgroups.

\begin{problem}
Are there finitely many ``gaps'' in the range of growths within $[0,\omega_G]$?
Can one produce further gaps with a short exact sequence?
\end{problem}

\begin{problem}[Foundational]
Is the existence of a growth gap (at the top) independent of generators?
Is growth density independent of generators?
Is it independent of choice of proper cocompact action on metric space $X$, i.e.\ independent of choice of proper metric on $G$?
\end{problem}

\begin{rema}
	If $F$ is the free group acting on its Cayley graph with respect to a free basis, the density part of \autoref{res: main new} was first proved by the last three authors in  \cite{louvaris2024density} using a probabilistic approach.
	Since then, a deterministic strategy has been independently proposed in \cite{Coulon:2025cr} and \cite{timar2026density}.
	The goal of this article is to extend this method to a broader geometric context.
	% \begin{com}
	% 	RC. I rephrased the remark to make sure that nobody will tell us that we ``stole'' the result from Timar.
	% \end{com}
\end{rema}

% -------------------------------------------------------
\section{The eigenvalue approach for the Cayley graph}
\label{sec:eigenvalue_approach}
% -------------------------------------------------------

\begin{nota}
	% \begin{com}
	% 	RC. To be consistent throughout the paper, I rewrote everything using Serre's conventions for graphs.
	% \end{com}
	In this article we use the definition of graphs given by Serre \cite{Serre:1977wy}.
	More precisely a graph is a pair $\Gamma = (V,E)$ where $V$ and $E$ are respectively the vertex and the edge set.
	It comes with an involution of $E$ reversing the orientation of edges, which we write $e \mapsto \bar{e}$.
	The initial and terminal vertices of an edge $e \in E$ are denoted by $o(e)$ and $t(e)$ respectively.
	The degree of a vertex $v \in V$, denoted by $\deg (v)$ is the number of edges $e \in E$ such that $o(e) = v$.
	A loop is an edge $e \in E$ such that $o(e) = t(e)$.
	We often confuse the graph $\Gamma$ and its topological realization.
\end{nota}

In this section we prove the density part of \autoref{res: main new} in the special case where $G = F_r$ is a free group and $X$ is its Cayley graph with respect to a free basis, via the non-backtracking matrix and the configuration model.
%In this section we use the combinatorial convention for graphs: a graph $G = (V, E)$ consists of a vertex set $V$ and an edge set $E$ of unordered pairs $\{u,v\}$ with $u \neq v$.
%This differs from Serre's convention used in \autoref{sec:density_free_group}, where $E$ is a set of oriented edges equipped with an involution; the two conventions are equivalent up to the identification of each unordered edge $\{u,v\}$ with the pair of oriented edges $(u,v)$ and $(v,u)$.

\subsection{Subgroups, subtrees, and immersed graphs}\label{sub:groups_to_graphs}

Let $B_r$ be a graph with one vertex and $r$ edges.
The fundamental group of $B_r$ based at its vertex is isomorphic to $F_r$.
Letting the basepoint of $B_r$ be its vertex, we have $\pi_1B_r\cong F_r$
where $F_r$ is a free group of rank~$r$ whose basis corresponds to the directed loops of $B_r$.
An \emph{immersion} $f: A\rightarrow B$ of graphs is a locally-injective map sending vertices to vertices and edges to edges.
The main method used to study free groups and their subgroups
was popularized by Stallings \cite{Stallings83}. In particular, we use:

\begin{lemm}[See Stallings {\cite[prop~5.3]{Stallings83}}]
\label{lem:immersion pi_1 injective} 
	Let $(A,a)$ and $(B,b)$ be graphs
	with basepoints $a$ and $b$.
	Let $f: A\rightarrow B$ be a basepoint preserving immersion. Then $f_*: \pi_1(A,a)\rightarrow \pi_1(B,b)$ is injective.
\end{lemm}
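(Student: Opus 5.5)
The plan is the standard Stallings argument: show that an immersion sends reduced edge-loops to reduced edge-loops, and that a nontrivial reduced loop in a graph is never null-homotopic.

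First we recall the combinatorial description of $\pi_1$ of a graph. Every element of $\pi_1(A,a)$ is represented by an edge-path $e_1 e_2 \cdots e_n$ with $o(e_1) = t(e_n) = a$ and $t(e_i) = o(e_{i+1})$. Such a path can be freely reduced by deleting backtracks $e_i e_{i+1}$ with $e_{i+1} = \bar e_i$; this does not change the homotopy class. Hence every class contains a \emph{reduced} loop, meaning one with no backtracks.

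The key fact we will use is that a reduced loop of positive length is not null-homotopic. To see this, lift the loop to the universal cover $\tilde A$, which is a tree. The lift is again a reduced path, and in a tree a reduced path is the unique geodesic between its endpoints, so it is injective on vertices. A path of positive length therefore has distinct endpoints, the lift is not closed, and the loop is nontrivial in $\pi_1$. Equivalently, the classical statement is that reduced loops based at $a$ are in bijection with $\pi_1(A,a)$.

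Now let $\gamma = e_1\cdots e_n$ be a reduced loop at $a$ with $n \geq 1$, representing a nontrivial element. Its image $f(\gamma) = f(e_1)\cdots f(e_n)$ is a loop at $b$. Suppose a backtrack occurred, that is, $f(e_{i+1}) = \overline{f(e_i)}$. Then $e_{i+1}$ and $\bar e_i$ are two edges with the same origin $t(e_i)$ and the same image under $f$. Local injectivity at the vertex $t(e_i)$ forces $e_{i+1} = \bar e_i$, which contradicts the assumption that $\gamma$ is reduced. So $f(\gamma)$ is a reduced loop of length $n \geq 1$ in $B$, and by the key fact it is nontrivial in $\pi_1(B,b)$. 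Hence $\ker f_*$ is trivial.

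The only substantive step is the claim that reduced loops are nontrivial. The tree argument needs the universal cover of a graph to be a tree, which is standard and can be cited from Serre or Stallings. The rest is routine bookkeeping with Serre's edge conventions, in particular using local injectivity on the edges issuing from each vertex, $\{e : o(e) = v\}$.
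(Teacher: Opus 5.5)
Your proof is correct, and it is the standard Stallings argument. The paper does not reprove this lemma; it simply cites Stallings' Proposition~5.3, whose proof runs the same way. Injectivity of $f$ on each star $\{e : o(e)=v\}$, together with $f(\bar e)=\overline{f(e)}$, shows that reduced loops map to reduced loops, and a reduced loop of positive length is nontrivial because its lift to the universal cover, a tree, is not closed.
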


% \begin{com}
% 	RC. I suggest that we remove the proof and give the precise reference to Stallings' paper.
% 	The given proof is essentially the same.
% \end{com}

%The proof of 
%\autoref{lem:immersion pi_1 injective}  closest to the intentions of this text is that the induced map
%$\tilde f: \widetilde A \rightarrow \widetilde B$ between the universal covers is injective, since it is an immersion between trees. This implies $\pi_1$-injectivity as follows:
%If $\sigma \rightarrow A$ is a closed based path
%with $[\sigma]\neq 1_{\pi_1A}$ then its lift $\tilde \sigma \rightarrow \widetilde A$ is not closed. Hence $\tilde f \circ \tilde \sigma$ is not closed in $\widetilde B$ by injectivity of $\tilde f$. Hence $[f\circ\sigma] \neq 1_{\pi_1B}$.

\begin{lemm}
    Let $\Gamma = (V, E)$ be a finite graph whose degrees are in $\{1, \ldots, 2r\}$. 
 %    \begin{com}
 %    	RC. The proof is only for finite graphs. So the statement should say finite. 
	% There is probably already a reference in the literature for this (even for infinite graphs).
 %    \end{com}
    % \begin{com}
    % 	RC. For consistency, and to avoid confusions, I renamed the graphs $\Gamma$ instead of $G$.
    % \end{com}
    Then there is an immersion from $\Gamma$ to $B_r$. 
\end{lemm}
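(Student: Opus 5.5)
The plan is to reduce to the regular case and then build the immersion by an edge colouring. An immersion $\Gamma \to B_r$ is the same as a labelling of the edges of $\Gamma$ by the $2r$ oriented edges $a_1^{\pm1}, \dots, a_r^{\pm1}$ of $B_r$, compatible with the involution ($\bar e$ gets the inverse label of $e$). Local injectivity means that at every vertex the edges $e$ with $o(e)=v$ carry pairwise distinct labels. If $\Gamma \subseteq \Gamma'$ is a subgraph, the restriction of an immersion $\Gamma' \to B_r$ is still an immersion. So it suffices to embed $\Gamma$ in a finite $2r$-regular graph and to immerse that graph.

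\emph{Step 1 (completion).} Take two disjoint copies $\Gamma$ and $\Gamma'$, writing $v \mapsto v'$ for the identification. For every vertex $v$ of $\Gamma$, add $2r - \deg(v) \geq 0$ new edges (with their reverses) joining $v$ to $v'$. Each such edge raises $\deg(v)$ and $\deg(v')$ by exactly one. The resulting finite graph $\Gamma''$ is therefore $2r$-regular and contains $\Gamma$. Doubling is used because the deficit $2r - \deg(v)$ may be odd, so it cannot always be filled by adding loops at $v$, each of which adds $2$ to the degree.

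\emph{Step 2 (balanced orientation).} Every vertex of $\Gamma''$ has even degree $2r$. Each connected component therefore has an Eulerian circuit. Traversing it selects one edge from each pair $\{e, \bar e\}$, giving an orientation in which every vertex has exactly $r$ outgoing and $r$ incoming selected edges. A loop traversed once counts as one outgoing and one incoming, consistent with Serre's convention that a loop contributes $2$ to the degree.

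\emph{Step 3 (colouring).} Form the bipartite multigraph $\mathcal B$ with vertex sets $V^{\mathrm{out}}$ and $V^{\mathrm{in}}$, two copies of the vertex set of $\Gamma''$. Each selected edge $e$ becomes an edge from $o(e)^{\mathrm{out}}$ to $t(e)^{\mathrm{in}}$, so a loop becomes an ordinary edge between the two copies of its vertex. By Step 2, $\mathcal B$ is $r$-regular. By König's theorem (Hall's theorem applied repeatedly), its edge set decomposes into $r$ perfect matchings $P_1, \dots, P_r$. Label each selected edge in $P_i$ by $a_i$ and its reverse by $a_i^{-1}$. At each vertex $v$ there is then exactly one outgoing edge labelled $a_i$, coming from the edge of $P_i$ at $v^{\mathrm{out}}$. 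There is also exactly one outgoing edge labelled $a_i^{-1}$, namely the reverse of the edge of $P_i$ at $v^{\mathrm{in}}$. Hence the $2r$ edges starting at $v$ receive distinct labels, and sending all vertices to the vertex of $B_r$ gives an immersion $\Gamma'' \to B_r$. Restricting it to $\Gamma$ finishes the proof.

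I expect the only delicate point to be the bookkeeping with Serre's conventions, in particular loops and multiple edges. One must check that the Eulerian orientation and the bipartite decomposition treat a loop correctly, so that its two ends receive the distinct labels $a_i$ and $a_i^{-1}$. König's theorem holds for bipartite multigraphs, so multiple edges cause no trouble.
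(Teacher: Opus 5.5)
Your proof is correct, but it follows a somewhat different route from the paper's. The paper cites a decomposition theorem of K\"onig that applies directly to graphs with degrees at most $2r$. It splits $E$ into $r$ involution-invariant classes $E_1,\dots,E_r$, with at most two edges of each class issuing from any vertex. Each class therefore spans a disjoint union of segments and cycles. The paper orients each such component arbitrarily and labels its edges $a_i^{\pm1}$.

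You first embed $\Gamma$ in a $2r$-regular graph $\Gamma''$ by doubling. You then reprove the needed decomposition in the regular case, which is the classical proof of Petersen's $2$-factor theorem: an Eulerian orientation followed by splitting the $r$-regular bipartite double into $r$ perfect matchings. The edges labelled $a_i^{\pm1}$ in your construction form a $2$-factor of $\Gamma''$ that is already coherently oriented. Restricting to $\Gamma$ gives exactly the segments and cycles of the paper's $E_i$. So the two arguments rest on the same combinatorial structure, and your Euler-circuit step replaces the paper's ``choose an orientation of each component''.

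The paper's route buys brevity, at the price of a less standard reference. Yours is self-contained, using only Euler circuits and Hall's theorem for bipartite multigraphs. It also gives a stronger conclusion: every label appears exactly once at each vertex of $\Gamma''$. Hence $\Gamma''\to B_r$ is a finite covering, and $\Gamma$ sits inside a finite cover of $B_r$. Your Steps~2--3 also supply the fact asserted without proof in Section~3: a finite $2r$-regular graph admits a reduced labelling by $a_1^{\pm1},\dots,a_r^{\pm1}$ whose image has finite index.
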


\begin{proof}
	According to Konig \cite[Chapter~XI, thm~6]{Konig:1990th}, there is a partition of $E$ into $r$ (possibly empty) subsets $E_1, \dots, E_r$ with the following properties.
	For each $i \in \{1, \dots, r\}$, the set $E_i$ is invariant under the involution $e \mapsto \bar e$.
	Moreover, for every vertex $v \in V$, for every $i \in \{1, \dots, r\}$, there are at most two edges $e \in E_i$ such that $o(e) = v$.
	It follows that the subgraph induced by $E_i$ is a union of disjoint cycles and segments.
	We choose of each for this component an arbitrary orientation.
	It defines an immersion $\Gamma \to B_r$. 
\end{proof}

\begin{coro} \label{cor:immersion_and_subtree}
$\widetilde \Gamma \rightarrow \widetilde B_r$ is injective and $\pi_1 \Gamma\rightarrow \pi_1B_r$ is injective.
\end{coro}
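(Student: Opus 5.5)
The plan is to fix an immersion $f\colon \Gamma \to B_r$, given by the preceding lemma, and study its lift $\tilde f\colon \widetilde\Gamma \to \widetilde B_r$ between universal covers. Fix a vertex $a$ of $\Gamma$, its image $b=f(a)$ (the unique vertex of $B_r$), and lifts $\tilde a\in\widetilde\Gamma$, $\tilde b\in\widetilde B_r$. Lifting $f\circ p_\Gamma$ through the covering $p_B\colon\widetilde B_r\to B_r$ gives $\tilde f$ with $\tilde f(\tilde a)=\tilde b$; this uses that $\widetilde\Gamma$ is simply connected.

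\textbf{Step 1: $\tilde f$ is an immersion.} The covering maps $p_\Gamma$ and $p_B$ are local isomorphisms of graphs: for each vertex $v$ they induce bijections between the edges issuing from $v$ and those issuing from its image. Since $p_B\circ\tilde f=f\circ p_\Gamma$ and $f$ is locally injective, $\tilde f$ is locally injective as well. Concretely, two distinct edges $e\neq e'$ with $o(e)=o(e')$ map under $p_\Gamma$ to distinct edges at the same vertex. Then $f$ sends these to distinct edges, so $\tilde f(e)\neq\tilde f(e')$.

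\textbf{Step 2: injectivity on the tree.} Both $\widetilde\Gamma$ and $\widetilde B_r$ are trees. Let $u\neq v$ be vertices of $\widetilde\Gamma$ and let $\gamma$ be the geodesic edge path joining them; it is reduced, meaning it contains no subpath $e\bar e$. By Step 1, consecutive edges $e_i,e_{i+1}$ of $\gamma$ satisfy $\bar e_i\neq e_{i+1}$ with $o(\bar e_i)=o(e_{i+1})$, so their images also satisfy $\overline{\tilde f(e_i)}\neq\tilde f(e_{i+1})$. Hence $\tilde f\circ\gamma$ is a reduced path of positive length in a tree, and such a path cannot be closed. Therefore $\tilde f(u)\neq\tilde f(v)$. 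Injectivity on edges follows in the same way, since an edge is determined by its endpoints together with local injectivity, and loops do not occur in a tree.

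\textbf{Step 3: $\pi_1$-injectivity.} This is exactly \autoref{lem:immersion pi_1 injective} applied to $f$ with basepoints $a,b$. Alternatively, it can be derived from Step 2. Take a nontrivial class $[\sigma]\in\pi_1(\Gamma,a)$. It is represented by a reduced closed path, whose lift from $\tilde a$ ends at some vertex $\tilde a'\neq\tilde a$ because $\pi_1$ acts freely on $\widetilde\Gamma$. By injectivity, $\tilde f(\tilde a')\neq\tilde b$, so the lift of $f\circ\sigma$ is not closed, and hence $[f\circ\sigma]\neq1$.

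The only delicate point is Step 1, that is, making the local injectivity transfer through the covering maps using Serre's conventions. It is handled by the fact that coverings are stars-bijective; everything else is the standard fact that reduced paths in trees are geodesics.
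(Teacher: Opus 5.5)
Your proof is correct and follows essentially the paper's route: the paper deduces the corollary from the immersion lemma together with Stallings' lemma, and its underlying argument (the lift to universal covers is an immersion between trees, hence injective, which in turn forces $\pi_1$-injectivity) is exactly your Steps 1--3. The only implicit assumption, which the paper also leaves unstated, is that $\Gamma$ is connected, so that $\widetilde\Gamma$ is a tree.
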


\subsection{Growth rates as eigenvalues of the non-backtracking matrix}\label{sub:growth_as_eigenvalue}

Let $\Gamma = (V,E)$ be a finite connected graph. 
% \begin{com}
% 	RC. Removed the definition of the adjacency matrix that was not used.
% \end{com}
%Let $A_\Gamma$ be its adjacency matrix:
% \[\{A\}_{u,v \in V^2}=\mathbf{1}\left( \{u,v\} \in E \right).\]

\begin{defn}[Non-backtracking matrix]\label{defn_of_non-backtracking}
    The \emph{non-backtracking matrix} of $\Gamma$ is the $\card E \times \card E$ matrix $M_\Gamma$ where the entry indexed by $(e,e')$ is one if $t(e) = o(e')$ and $e' \neq \bar e$ and zero otherwise.
 %    \begin{com}
 %    	RC. I changed the notation: $B_r$ is a graph, $M_\Gamma$ is a matrix.
	% This is too close for objects with different nature.
 %    \end{com}
\end{defn}

By the Perron--Frobenius Theorem, the leading eigenvalue $\lambda_1(M_\Gamma)$ of $M_\Gamma$ is a non-negative real number.
If $\Gamma$ is connected, then $\ln \lambda_1(M_\Gamma)$ is the exponential growth rate of balls in the universal cover of $\Gamma$ \cite{angel2015non}.
% \begin{com}DTW: is my adjustment ok? It was this:
% If $\Gamma$ is connected, then $\ln \lambda_1(M_\Gamma)$ equals the exponential of the growth rate  (in the log scale) of the cardinality of balls in the universal cover of $\Gamma$ \cite{angel2015non}.\\
% RC. This is good for me.
% \end{com}
This explains how $\lambda_1(M_\Gamma)$ relates to the growth of groups acting on a tree.
%In particular, $\lambda_1(M_\Gamma) \in (1, 2r-1)$ if and only if $\omega(\widetilde G) \in (0, \ln(2r-1))$.
%We study the growth rate $\omega(\widetilde G)$ by analyzing $\lambda_1(M_\Gamma)$.

\begin{theo}\label{thm:dense_spectral_radius_non-back}
    Let $r \in \N \setminus\{0, 1\}$, $\lambda \in (1, 2r-1)$ and $\epsilon > 0$. 
    There exists a finite connected graph $\Gamma = (V, E)$, whose degrees are in $\{2, \ldots, 2r\}$, such that 
    \begin{align}\label{|lambda-a|}
    	\abs{\lambda_1(M_\Gamma)- \lambda}\leq \epsilon.
    \end{align}
%    Equivalently, $|\omega(\widetilde G) - \omega_0| \leq \epsilon'$ for some $\epsilon' \to 0$ as $\epsilon \to 0$.
	% \begin{com}
	% 	RC. I don't think that it is necessary to rephrase each time in the of exponential growth rate. 
	% 	The explanation above should suffice.
	% \end{com}
\end{theo}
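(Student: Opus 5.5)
The plan follows the section title, via the configuration model, but I will keep a deterministic fallback in view.

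\emph{Choice of degree sequence.} Fix $\lambda\in(1,2r-1)$. Choose a degree distribution $p$ supported on $\{2,\dots,2r\}$ whose size-biased excess mean equals $\lambda$, namely
\[
\frac{\sum_d d(d-1)p_d}{\sum_d d\,p_d}=\lambda .
\]
For instance, mix degrees $2$ and $2r$. Degree $2$ alone gives ratio $1$ and degree $2r$ alone gives $2r-1$. The ratio varies continuously with the mixing weight, and it can be realized (up to $\epsilon/2$) by rational proportions. For large $n$, take a degree sequence on $n$ vertices with these proportions and even total degree. Let $\Gamma_n$ be the random multigraph obtained from the configuration model, i.e.\ a uniform perfect matching of half-edges. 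Then all degrees of $\Gamma_n$ lie in $\{2,\dots,2r\}$.

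\emph{Probabilistic input.} The key claim is that $\lambda_1(M_{\Gamma_n})$ concentrates near the ratio above, with probability tending to $1$. This is the concentration result \autoref{thm:probabilistic_claim} from \cite{louvaris2024density}. Heuristically, the configuration model is locally a Galton--Watson tree whose offspring distribution is size-biased minus one, with mean $\lambda$. Non-backtracking walks counted by $M_{\Gamma_n}^k$ grow like $\lambda^k$ until scale $\log n$, and a trace or high-moment argument on $M^k$ with $k\asymp\log n$ bounds $\lambda_1$. Connectivity is the remaining issue. Minimum degree $\ge 2$ with a positive fraction of vertices of degree $\ge3$ gives a giant component containing all but $o(n)$ vertices, plus possibly small cycles. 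Alternatively, I would condition on connectivity, which has probability bounded below. A cleaner route is to pass to the component $C$ carrying the Perron eigenvector. Then $\lambda_1(M_C)=\lambda_1(M_{\Gamma_n})$, because $\lambda_1$ of a disjoint union is the maximum over components, and $C$ still has degrees in $\{2,\dots,2r\}$. Loops and multi-edges are allowed in Serre's convention, so no simplicity conditioning is needed.

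\emph{Main obstacle.} The hard step is the upper bound $\lambda_1\le\lambda+\epsilon$, because rare dense substructures could inflate the spectral radius. This is exactly where I rely on \autoref{thm:probabilistic_claim} rather than reprove it.

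\emph{Deterministic fallback.} If I wanted to avoid probability, I would start from a $2r$-regular graph of large girth $N$, for which $\lambda_1=2r-1$. I would then subdivide edges one at a time, each time passing to the connected graph. Two facts are needed. First, each single subdivision changes $\lambda_1$ by a factor at most $(2r-1)^{O(1/N)}$, using girth to compare path counts. Second, subdividing every edge many times drives $\lambda_1$ toward $1$. Continuity in small steps then yields every $\lambda\in(1,2r-1)$ within $\epsilon$, by an intermediate-value argument along the sequence.
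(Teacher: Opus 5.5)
Your main route is the paper's proof: take $P$ supported on $\{2,2r\}$ with $\mathbb E[k(k-1)]/\mathbb E[k]=\lambda$, apply \autoref{thm:probabilistic_claim} to degree sequences with these proportions, and replace $\Gamma_n$ by the connected component on which $\lambda_1$ is attained; your deterministic fallback is essentially the weight/subdivision argument of \autoref{sec:density_free_group}. One small adjustment: \autoref{thm:probabilistic_claim} is stated for the uniform model on loopless graphs, not the raw matching (multigraph) model, so rather than asserting that no conditioning is needed, draw $\Gamma_n$ from the model the theorem actually covers; since you only need one graph to exist, nothing else changes.
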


An important tool we use to prove \autoref{thm:dense_spectral_radius_non-back} is randomness. 
Specifically, we prove that a random graph with $n$ vertices satisfying a suitable vertex degree distribution satisfies \eqref{|lambda-a|} with probability tending to one as $n$ tends to infinity.
% \begin{com}
% 	RC. I tried to rephrase this, because the original sentence was unclear to me.
% \end{com}
%
%
%
%Specifically, we prove that for any length~$n$ graphic sequence $\mathbf K = \mathbf K(\alpha)$ satisfying some regularity assumptions, a uniformly chosen graph with degrees $\mathbf K$ satisfies \eqref{|lambda-a|} with probability tending to~1. 

\subsection{Probabilistic settings}
The random graph model we analyze is a random graph with a given degree sequence. 
We formally define the model.
For further details, see \cite[sec~1.4]{bordenave2016lecture}.
% \begin{com}
% 	RC. I found the notations difficult to follow: 
% 	e.g. $\mathbf d$ is a set of degrees, but $\mathbf d_n$ is a collection of cardinality;
% 	the elements of $\mathbf d$ are sometimes written $k$, sometimes $i$;
% 	sometimes its was written $d_n$ sometimes $\mathbf d_n$.
% 	So I proposed slightly different notations.
% 	Feel free to disagree!
% \end{com}

Let $2 \leq k_{\min} < k_{\max}$ be two positive integers, and let $\mathbf K=\{k_{\min},\ldots, k_{\max}\}$ be the set that will represent the possible degrees in the graph.
For each $n \in \N$, let $\mathbf N_n=(n_k)_{k \in \mathbf K}$ be a family with:
\begin{itemize}
    \item $\sum_{k \in \mathbf K} n_k=n$.
    \item $\sum_{k \in \mathbf K} kn_k$ is even.
\end{itemize}

\begin{defn}
\label{defn:configuration_model}
	Let $n \in \N$ and $\mathbf N_n = \{n_k\}_{k \in \mathbf K}$ be as above. 
	Define $\Gamma_n(\mathbf N_n)$ to be the uniform probability distribution over graphs with $n$ vertices, no loops, and with $n_k$ vertices of degree $k$, for every $k \in \mathbf K$. 
	We call $\mathbf N_n$ the \emph{degree distribution} of the random graph.
	% \begin{com}
	% 	RC. I slightly rephrased the definition that was ambiguous to me. 
	% 	Please check if I did not introduce any mistake.
	% 	How important is it really that the graph has no loop?
	% \end{com}
\end{defn}
% \begin{com}
% 	RC. Unless I am mistaken, for growth purpose we only use the "mutligraph" version of the model. 
% 	Indeed a graph with multiple edges or loops, is perfectly suitable to run the immersion argument.
% 	Serre's definition of graphs is essentially equivalent to multigraphs. 
% 	Hence I only kept this one.
% 	This also avoid the probably of writing the second model $\tilde G(d_n)$ that could be confused with the universal cover of $G(d_n)$.
% \end{com}

%
%
%In many cases it is easier to work with the more constructive model of random \emph{multigraphs} with a given degree sequence.
%
%\begin{defn}[Configuration model]\label{defn:configuration_model}
%Let $n \in \N$ and $d_n = \{n_i\}_{i \in \mathbf{d}}$ be as above. 
%Define $\Tilde{G}_n(d_n)$ to be the uniform probability distribution over multigraphs with $n$ vertices, and with degrees $d_n$. 
%\end{defn}

The configuration model can be described algorithmically as follows. 
Consider a set $V$ with $n$ elements with a partition
\begin{equation*}
	V = \bigsqcup_{k \in \mathbf K} V_k, 
	\quad \text{where} \quad 
	\card{V_k} = n_k, \ \forall k \in \mathbf K.
\end{equation*}
For each $k \in \mathbf K$ and $v \in V_k$, let 
% \begin{com}DTW: should this be $\Delta_k$ instead of $\Delta_v$ ?\\ RC. I believe $\Delta_v$ is correct. There is a set of half-edges for each vertex.\end{com} 
$\Delta_v^{(n)} = \set{(v,j)}{ 1 \leq j \leq k}$, and define $\Delta^{(n)}$ as the disjoint union of all $\Delta_v^{(n)}$.
One should think of $\Delta^{(n)}$ as the \emph{set of half-edges} of a graph $\Gamma$ with vertex set $V$ satisfying the constraints of the model.
The set of graphs with $n$ vertices, no loops and degree repartition is exactly the set of \emph{matchings} of $\Delta^{(n)}$, i.e.\ involutions without fixed point of $\Delta^{(n)}$.
Let $M(\Delta^{(n)})$ be the set of matchings of the $\Delta^{(n)}$. 
Given such a matching $\sigma$, there is an edge in the corresponding graph from $v$ to $v'$ if $\sigma$ sends $(v,j)$ to $(v',j')$.
%
%For a vertex $u$, let $d_u=\text{degree}(u)$, let $\Delta^{(n)}_u= \{(u,j), 1\leq j \leq d_u\} $ and let $\Delta^{(n)}=\cup_u \Delta^{(n)}_u$. 
%The set $\Delta^{(n)}$ is the \emph{set of half-edges} of $V(\Tilde{G}_n)$. 
%Then the set of multigraphs  with $n$ vertices and with degrees $d_n$ is exactly the set of matchings of the set $\Delta^{(n)}$, i.e.\ permutations of $\Delta^{(n)}$ that are their 
% own inverse and with no fixed points.  
%Let $M(\Delta^{(n)})$ be the set of matchings of the $\Delta^{(n)}$. 
%Given $\sigma \in M(\Delta^{(n)})$, two vertices are adjacent if two half-edges of those vertices are matched via $\sigma$. 
%
Instead of picking a graph uniformly at random, we can equivalently pick a matching in $M(\Delta^{(n)})$ uniformly at random. 
% Lastly we have:
% \begin{align*}
%     |M(\Delta^{(n)})|
%      \ = \ (|\Delta^{(n)}|-1)(|\Delta^{(n)}|-3)\cdots 1
%      \ = \ (|\Delta^{(n)}|-1){!}{!}
% \end{align*}
% where $k!!$ denotes $(k)(k-2)(k-4)\cdots$.
% \begin{com}
%     RC. Since we are not using it later, I am not sure what the computation $|M(\Delta^{(n)})|$ add to this section.
% \end{com}

\subsection{The probabilistic concentration result and density of growth rates}

%Denote by $\mathbf N = \{\mathbf N_n\}_{n \in \N}$ a sequence of degree sequences, by $\mathbf\Gamma= \{\Gamma_n(\mathbf N_n)\}_{n \in \N}$ a sequence of random graphs, where for each $n$ we draw $\Gamma_n$ from the uniform distribution over graphs with degrees $d_n$, and by $\tilde{\mathbf \Gamma}_n = \{\tilde \Gamma_n(d_n)\}$ a sequence of random multigraphs, drawn from the configuration model. 

The following is the key probabilistic result, whose proof appears in the companion paper \cite{louvaris2024density}.

\begin{theo}[{\cite[thm~4.6]{louvaris2024density}}]\label{thm:probabilistic_claim}
	Let $\mathbf N = \{\mathbf N_n\}_{n \in \N}$ be a sequence of degree distributions.
	Let $\mathbf\Gamma= \{\Gamma_n(\mathbf N_n)\}_{n \in \N}$ be a sequence of random graphs, where for each $n$ we draw $\Gamma_n$ from the uniform distribution over graphs  with degree distribution $\mathbf N_n$.
	Let $P$ be a probability measure on $\mathbf K$.
	
	Suppose that there is $C > 0$ such that for all sufficiently large $n$, the degree distribution $\mathbf N_n = (n_k)_{k \in \mathbf K}$ satisfies
	\begin{align}
	\label{rate of convergence assumption simple graph}
    		\abs{ \frac{n_k}n-P(k)} \leq \frac Cn, \quad \forall  k \in \mathbf K.
	\end{align}
	Then there exists $c > 0$ such that with high probability as $n$ tends to infinity
	\begin{equation*}
		\abs{\lambda_1 \left(M_{\Gamma_n}\right)-\lambda}  \leq   \frac 1{c \log n}, 
		\quad \text{where} \quad 
		\lambda =  \frac{\mathbb E[k(k-1)]}{\mathbb E[k]}.	
	\end{equation*}
	% \begin{com}
	% 	RC. I changed the formula for $\lambda$. 
	% 	I know what is the expectation of a random variable. 
	% 	I am less familiar with the expectation of a probability measure, or any function of this probability measure
	% \end{com}
\end{theo}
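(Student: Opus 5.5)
Although this theorem is quoted from the companion paper, I will prove it directly by the standard route for non-backtracking spectra of sparse random graphs. There are two parts: a first-moment estimate for the leading eigenvalue, and a structural argument that pins $\lambda_1(M_{\Gamma_n})$ to the Perron root of the local weak limit.

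\textbf{Step 1: configuration model and the limit constant.} I first pass from the uniform simple graph $\Gamma_n(\mathbf N_n)$ to the configuration model, i.e.\ a uniform matching of $\Delta^{(n)}$. Since the degrees are bounded by $k_{\max}$, the probability that the matching produces no loops or multiple edges is bounded below by a positive constant independent of $n$. Hence any event holding with high probability for the matching also holds with high probability for $\Gamma_n$.

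Under the matching, exploring along a half-edge reaches a vertex of degree $k$ with probability proportional to $k n_k$. This is the size-biased law, and it is $O(1/n)$-close to $kP(k)/\mathbb E[k]$ by \eqref{rate of convergence assumption simple graph}. The number of fresh half-edges produced is $k-1$, so the mean offspring is $\mathbb E[k(k-1)]/\mathbb E[k]=\lambda$. The local weak limit is therefore a unimodular Galton--Watson tree $T$ whose non-backtracking growth rate is $\lambda$. Because $k_{\min}\ge2$, every vertex has at least one outgoing continuation, so $\lambda\ge1$.

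\textbf{Step 2: upper bound.} Write $\lambda_1(M)\le \|M^\ell\|^{1/\ell}$ and bound $\|M^\ell\|$ by a trace or moment method. Take $\ell=c'\log n$, smaller than the injectivity radius scale. The expected number of non-backtracking walks of length $\ell$ is about $|E|\lambda^\ell$ up to factors $(1+O(\ell/n))^\ell$. The main point is to control the contribution of tangled neighbourhoods, i.e.\ balls of radius $\ell$ containing at least two cycles. I would use the standard fact that with high probability every ball of radius $c'\log n$ contains at most one cycle, and then bound $\mathbb E\,\mathrm{tr}\big((M^\ell)^{*}M^\ell\big)$ by summing over such tangle-free paths. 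This gives $\lambda_1\le \lambda\, (\mathrm{poly}(n))^{1/\ell}=\lambda+O(1/\log n)$. It is precisely the choice $\ell\asymp\log n$ that produces the $1/(c\log n)$ rate.

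\textbf{Step 3: lower bound.} Since $M$ is nonnegative, for any positive vector $x$ we have $\lambda_1(M)\ge \min_e (M^\ell x)_e^{1/\ell}$ restricted to a suitable subset. Instead I use the test vector $x=M^{\ell}\mathbf 1$ and the inequality $\lambda_1^{2\ell}\ge \langle M^{2\ell}\mathbf 1,\mathbf 1\rangle/\langle \mathbf 1,\mathbf 1\rangle$ in the Perron--Frobenius form $\lambda_1(M)\ge (\mathbf 1^T M^{\ell}\mathbf 1/|E|)^{1/\ell}$. The right-hand side counts non-backtracking walks of length $\ell$ per oriented edge. Concentration of this count comes from a second moment, or martingale bounded-differences, estimate on the exploration process: switching one pair in the matching changes the count by at most $\lambda^{\ell}\cdot O(1)$ locally. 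This gives $\mathbf 1^TM^\ell\mathbf 1\ge |E|\lambda^\ell n^{-o(1)}$ with high probability, hence $\lambda_1\ge\lambda-O(1/\log n)$.

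\textbf{Main obstacle.} The main obstacle is Step 2. Naive trace bounds are polluted by the short cycles that $\Gamma_n$ does contain, and making the tangle-free path counting precise with only the $O(1/n)$ control on the degree proportions is the technical heart of the argument. I would follow Bordenave's lecture-notes path counting and adapt it to non-regular degrees by weighting with the size-biased law.
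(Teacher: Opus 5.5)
The paper does not prove this statement; it quotes it from \cite[thm~4.6]{louvaris2024density}, so your sketch has to stand on its own. Step~1, the transfer from the configuration model, is fine. Steps~2 and~3, however, have genuine gaps, and they fail for a common reason. At scale $\ell\asymp\log n$, a multiplicative loss $P$ in front of $\lambda^\ell$ costs $P^{1/\ell}=1+O(\log P/\log n)$. So the rate $1/(c\log n)$ requires losses of size $e^{O(1)}$: polylogarithmic losses only give $\log\log n/\log n$, and polynomial losses give a constant factor.

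\textbf{Step~2.} You claim $(\mathrm{poly}(n))^{1/\ell}=1+O(1/\log n)$, but in fact $(n^a)^{1/(c'\log n)}=e^{a/c'}$. The bound $\lambda_1^{2\ell}\le\mathrm{tr}\big((M^\ell)^*M^\ell\big)$ inherently carries the dimensional factor $|E|\asymp n$. You cannot absorb it by taking $c'$ large, because tangle-freeness of all balls only holds up to radius roughly $\frac14\log_\lambda n$. So, as written, Step~2 does not even show $\lambda_1\to\lambda$. Removing the dimensional factor needs a genuinely different estimate. One option is high-moment traces $\mathrm{tr}\big[(M^\ell(M^\ell)^*)^m\big]$ with $m\to\infty$, together with a decomposition of $M^\ell$ that isolates the Perron direction. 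Another is a Collatz--Wielandt bound $\lambda_1\le\max_e(Mx)_e/x_e$ with a carefully built positive test vector $x$. Either way you would still have to show that the loss is $e^{O(1)}$.

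\textbf{Step~3.} The inequality $\lambda_1(M)\ge(\mathbf 1^TM^\ell\mathbf 1/|E|)^{1/\ell}$ is not a Perron--Frobenius fact. It is the Rayleigh-quotient bound for symmetric matrices, whereas $M_\Gamma$ is only $J$-symmetric: $M_\Gamma^T=JM_\Gamma J$, with $J$ the orientation reversal.

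\begin{itemize}
\item It fails when $M_\Gamma$ is nilpotent, as for finite trees.
\item It also fails with all degrees in $\{2,3\}$. Join the $2^D$ leaves of a binary tree of depth $D$ in pairs by paths of length $D^2$. Then $\lambda_1\le 2^{1/D}$, since consecutive tree excursions, each offering at most $2^D$ choices, are separated by $D^2$ forced steps. Yet the average number of non-backtracking walks of length $D$ is at least of order $2^{D/2}/D^2$.
\end{itemize}

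The valid bound $\lambda_1^\ell\ge\min_e(M^\ell\mathbf 1)_e$ is useless at your scale. When $P(2)>0$, which is exactly the case used in the proof of \autoref{thm:dense_spectral_radius_non-back}, the graph contains with high probability chains of degree-$2$ vertices of length $\asymp\log n$, so this minimum equals $1$.

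Two further problems remain even if your inequality is granted:
\begin{itemize}
\item A loss of $n^{-o(1)}$ only yields $\lambda_1\ge\lambda-o(1)$, with no rate.
\item The bounded-differences constant is not $O(\lambda^\ell)$, since up to order $\ell(k_{\max}-1)^\ell$ walks of length $\ell$ can use a single edge. Concentration therefore needs a typical-case argument.
\end{itemize}

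A working lower bound needs either a well-chosen positive test vector in the min form of Collatz--Wielandt, or a perturbation argument around an approximate Perron vector such as $M^\ell\mathbf 1$. The latter again relies on the norm control that Step~2 fails to provide.
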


\begin{rema}
	In the statement, the expectations are computed with respect to the probability measure $P$ on $\mathbf K$.
\end{rema}

We can now prove \autoref{thm:dense_spectral_radius_non-back} using this probabilistic approach.

\begin{proof}[Proof of \autoref{thm:dense_spectral_radius_non-back}]
	Let $r \in \N$, $r \geq 2$, and let $\lambda \in (1, 2r-1)$. 
	We consider graphs whose vertices have degree $2$ or $2r$.
	In other words we choose a probability measure $P$ supported on $\{2, 2r\}$.
	Such a measure is determined by the value $x$ of $P(2)$.
	Observe that the function 
	\begin{equation*}
		x \mapsto  \frac{\mathbb E[k(k-1)]}{\mathbb E[k]} = \frac{ 2x + 2r(2r-1)(1-x)}{2x + 2r(1-x)} 
	\end{equation*}
	is continuous on $[0, 1]$, with range $[1, 2r-1]$,  so we can choose a probability measure $P$ with 
	\begin{equation*}
		\frac{\mathbb E[k(k-1)]}{\mathbb E[k]} = \lambda.
	\end{equation*}
Choose a sequence of degree distributions $\mathbf N_n=(n_k)_{k\in\{2,2r\}}$ satisfying the hypotheses of \autoref{thm:probabilistic_claim} for $P$.
	By \autoref{thm:probabilistic_claim}, for all sufficiently large $n$ there exists a graph $\Gamma$ with $n$ vertices and degrees in $\{2, 2r\}$ such that $\abs{\lambda_1(M_\Gamma) - \lambda} \leq \epsilon$.
	Note that if $\Gamma$ is disconnected, we can always replace it by a connected component on which the spectral radius of $M_\Gamma$ is realized.
    % \begin{com}
    %     DTW: Do we need to verify that it is connected? is that almost surely the case and we insist on that? 
    %     Or do we pass to a component like:
    %     "If $\Gamma$ is disconnected, replace it by a connected component on which the spectral radius of $M_\Gamma$ is realized." \\
    %     RC. Your paper does not discuss this part :-)
    %     It feels to me that the second option is the good one. \\
    %     RC. Looks good to me.
    % \end{com}
	\end{proof}
	
	We now prove the density of growth rates for the free group acting on its Cayley graph with respect to a free basis.
	
	\begin{proof}[Proof of the density part of \autoref{res: main new} for the free group]
	We identify $F_r$ with the fundamental group $\pi_1B_r$.
	We see each loop of $B_r$ as a generator, and write $X$ for the corresponding Cayley graph.
% old version
%Let $\omega \in (0, \ln(2r-1))$ and $\epsilon > 0$.	By \autoref{thm:dense_spectral_radius_non-back}, there is a graph $\Gamma$ whose vertex degrees are in $\{2, \dots, 2r\}$ and such that $\abs{\lambda(M_\Gamma) - e^\omega} \leq \epsilon$.	Since the logarithm is concave it yields, $\abs{\ln \lambda(M_\Gamma) - \omega} \leq e^{-\omega}\epsilon$.	By \autoref{cor:immersion_and_subtree},	there is an immersion $\Gamma \rightarrow B_r$, and so we can regard $H = \pi_1 \Gamma$ as a subgroup of $\pi_1B_r = F_r$.	With this identification, $\omega(H,X)$ is also the growth rate of balls in the universal cover $\tilde \Gamma$ of $\Gamma$, thus $\omega_H = \ln \lambda_1(M_\Gamma)$.	Consequently $\abs{\omega_H - \omega} \leq \epsilon$.
% \begin{com}DTW:
% Please double check my epsilon-management correction...
% \end{com}
% new version
Let $\omega \in (0, \ln(2r-1))$ and $\epsilon > 0$.
	Let
	$\eta = e^\omega(1-e^{-\epsilon})$.
	By~\autoref{thm:dense_spectral_radius_non-back}, there is a graph $\Gamma$ whose vertex degrees are in $\{2, \dots, 2r\}$ and such that
$\abs{\lambda_1(M_\Gamma)-e^\omega}\leq \eta.$
Since
$
e^\omega-\eta=e^{\omega-\epsilon}$
and %\quad\text{and}\quad
$e^\omega+\eta\leq e^{\omega+\epsilon},
$
we have
$\abs{\ln\lambda_1(M_\Gamma)-\omega}\leq \epsilon$.
By \autoref{cor:immersion_and_subtree}, there is an immersion $\Gamma \rightarrow B_r$, and so we can regard $H=\pi_1\Gamma$ as a subgroup of $\pi_1B_r=F_r$.
With this identification, $\omega(H,X)$ is the growth rate of balls in the universal cover $\widetilde \Gamma$, thus
$\omega_H=\ln\lambda_1(M_\Gamma)$.
Consequently $\abs{\omega_H-\omega}\leq\epsilon$.
\end{proof}

% -------------------------------------------------------
\section{Quasiconvex growth spectrum of free groups}
\label{sec:density_free_group}
% -------------------------------------------------------

In this section we prove the density part of \autoref{res: main new} for free groups with a convex-cocompact action on a hyperbolic space, using purely geometric arguments.
We start by recalling a classical lemma whose proof, relying on a simple counting argument, is left to the reader.

\begin{lemm}
\label{res: comparing growth}
	Let $(X_1, \distV[1])$ and $(X_2, \distV[2])$ be two metric spaces.
	Let $G$ be a group acting properly by isometries on both $X_1$ and $X_2$.
	Suppose that there are $(x_1, x_2) \in X_1 \times X_2$ and $(\kappa, \beta) \in \R_+^* \times \R_+$ such that for every $g \in G$,
	\begin{equation*}
	    \dist[2]{x_2}{gx_2} \leq \kappa \dist[1]{x_1}{gx_1} + \beta.
	\end{equation*}
	Then $\omega(G, X_1) \leq \kappa\, \omega(G, X_2)$.
\end{lemm}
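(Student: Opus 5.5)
The plan is to compare the orbit-counting functions of the two actions directly, using the definition of $\omega(G,X)$ as a $\limsup$. For $i \in \{1,2\}$ and $r \geq 0$ set
\begin{equation*}
	N_i(r) = \card{\set{g \in G}{\dist[i]{x_i}{gx_i} \leq r}}.
\end{equation*}
These are finite because the actions are proper, and non-decreasing in $r$. Since $\omega(G,X_i)$ does not depend on the base point, we may compute it at the given points $x_1$ and $x_2$.

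The first step is a set inclusion. If $\dist[1]{x_1}{gx_1} \leq r$, the hypothesis gives $\dist[2]{x_2}{gx_2} \leq \kappa r + \beta$. Hence
\begin{equation*}
	\set{g \in G}{\dist[1]{x_1}{gx_1} \leq r} \subseteq \set{g \in G}{\dist[2]{x_2}{gx_2} \leq \kappa r + \beta},
\end{equation*}
and therefore $N_1(r) \leq N_2(\kappa r + \beta)$ for every $r \geq 0$.

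The second step passes to exponential rates by the change of variable $s = \kappa r + \beta$. For $r>0$ we write
\begin{equation*}
	\frac 1r \ln N_1(r) \leq \frac{\kappa r+\beta}{r}\cdot \frac{1}{\kappa r+\beta}\ln N_2(\kappa r+\beta).
\end{equation*}
As $r \to \infty$, the first factor tends to $\kappa$ because $\kappa > 0$. Also $s = \kappa r+\beta \to \infty$ monotonically, so the $\limsup$ of the second factor is at most $\limsup_{s\to\infty}\frac1s\ln N_2(s) = \omega(G,X_2)$. Taking $\limsup$ in $r$ then gives $\omega(G,X_1) \leq \kappa\,\omega(G,X_2)$.

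The only point requiring care is this last manipulation of the $\limsup$ of a product. If $\omega(G,X_2) = \infty$ there is nothing to prove. Otherwise both factors are eventually bounded and the first one converges, so the $\limsup$ of the product is at most $\kappa$ times the $\limsup$ of the second factor. Note also that $N_2(s) \geq 1$ since the identity is counted, so all the logarithms are non-negative and well defined. As a sanity check, the same bound follows from Poincaré series: for $s > \kappa\,\omega(G,X_2)$, the hypothesis gives $e^{-s\dist[1]{x_1}{gx_1}} \leq e^{s\beta/\kappa} e^{-(s/\kappa)\dist[2]{x_2}{gx_2}}$, so $\mathcal P_G$ for $X_1$ converges at $s$ and the critical exponents compare in the same way.
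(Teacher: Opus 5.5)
Your proof is correct and is exactly the simple counting argument the paper has in mind; the paper leaves this lemma to the reader. The inclusion of orbit balls gives $N_1(r) \le N_2(\kappa r+\beta)$, and your $\limsup$ step is handled properly, including the non-negativity of the logarithms and the trivial case $\omega(G,X_2)=\infty$. Your Poincaré series cross-check via critical exponents is also valid.
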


\noindent
{\bfseries Word metric on the free group.}
Let $F$ be a finitely generated free group.
We fix once and for all a free basis $\{a_1, \dots, a_r\}$ of $F$.
We denote the corresponding word distance between $g_1, g_2 \in F$ by $\abs{g_1 - g_2}$.

\medskip\noindent
{\bfseries Convex-cocompact action.}
Let $(X,d)$ be a geodesic, $\delta$-hyperbolic metric space.
We fix a base point $o \in X$ and assume that $F$ acts properly by isometries on $X$.
%For simplicity we let $\omega_F = \omega(F,X)$.
We suppose that this action is convex-cocompact, i.e.\ the orbit map $F \to X$ sending $g$ to $go$ is a quasi-isometric embedding:
there are constants $(\kappa, \beta) \in \R_+^* \times \R_+$ such that
\begin{equation*}
    \kappa^{-1}\abs{g_1 - g_2} - \beta \leq \dist{g_1 o}{g_2 o} \leq \kappa \abs{g_1 - g_2}, \quad \forall g_1,g_2 \in F.
\end{equation*}

\medskip\noindent
{\bfseries The graph $\Gamma$.}
Let $N$ be an integer.
Let $\Gamma = (V,E)$ be a finite $2r$-regular graph whose girth is at least $N$.
Recall that $e \mapsto \bar e$ is the involution reversing the orientation of an edge $e \in E$, while $o(e)$ and $t(e)$ stand for the initial and terminal vertices of $e$ respectively.
We write $H$ for the fundamental group of $\Gamma$.

\medskip\noindent
{\bfseries Labellings and weights on $\Gamma$.}
A \emph{labelling} of $\Gamma$ is a map $w \colon E \to F \setminus\{1\}$ such that $w(\bar{e}) = w(e)^{-1}$, for every $e \in E$.
Such a labelling is \emph{reduced} if for every $e,e' \in E$ with $t(e) = o(e')$ and $e' \neq \bar{e}$, the word $w(e)w(e')$ is reduced.
Such a labelling induces a morphism $\phi \colon H \to F$, where the image of an element $h \in H$ is the concatenation of all the labels read on some (hence any) loop of $\Gamma$ representing $h$.
If the labelling is reduced, then $\phi$ is one-to-one.
Since $\Gamma$ is a finite $2r$-regular graph, there is a reduced labelling $w_0 \colon E \to F$ such that every label belongs to $\{a_1^{\pm 1}, \dots, a_r^{\pm 1}\}$ and the image of the corresponding map $\phi \colon H \to F$ has finite index in $F$.
In particular, $\omega(\phi(H), X) = \omega(F, X)$.

A \emph{weight} on $\Gamma$ is a map $k \colon E \to \N \setminus\{0\}$ such that $k(e) = k(\bar{e})$, for every $e \in E$.
Given such a weight we build a reduced labelling $w \colon E \to F \setminus\{1\}$ of $\Gamma$ as follows: $w(e) = w_0(e)^{k(e)}$, for every $e \in E$.
For simplicity, we write $H_k$ for the image of the corresponding morphism $H \to F$ and set $\omega_k = \omega(H_k, X)$.

\begin{prop}
\label{res: comparing growth for same change label}
	There is $C \in \R_+^*$, which does not depend on $\Gamma$, with the following property.
	Let $e_0 \in E$.
	Let $k, k' \colon E \to \N \setminus\{0\}$ be two weights on $\Gamma$.
	Suppose that
	\begin{equation*}
	    \abs{k'(e_0) - k(e_0)} = 1 \quad \text{and} \quad k'(e) = k(e), \ \forall e \in E \setminus\{e_0, \bar{e}_0\}.
	\end{equation*}
	Then $\abs{\omega_k - \omega_{k'}} \leq C/N$.
\end{prop}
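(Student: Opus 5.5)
We compare the two subgroups through the abstract group $H=\pi_1\Gamma$ and \autoref{res: comparing growth}. Let $\phi_k, \phi_{k'} \colon H \to F$ be the injective morphisms given by the reduced labellings $w_0^{k}$ and $w_0^{k'}$, with images $H_k$ and $H_{k'}$. We let $H$ act on $X$ in two ways, through $\phi_k$ and through $\phi_{k'}$. Both actions are proper, and their growth rates are $\omega_k$ and $\omega_{k'}$. By symmetry we may assume $k'(e_0)=k(e_0)+1$. It then suffices to show, for every $h\in H$,
\begin{equation*}
	\dist{o}{\phi_{k'}(h)o} \leq \left(1+\frac{C'}{N}\right)\dist{o}{\phi_k(h)o} + \beta'
	\quad\text{and}\quad
	\dist{o}{\phi_{k}(h)o} \leq \dist{o}{\phi_{k'}(h)o} + \beta'.
\end{equation*}
\autoref{res: comparing growth} then gives $\omega_k \leq \omega_{k'}\leq (1+C'/N)\,\omega_k$. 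Since $\omega_k\leq \omega_F$, this yields $\abs{\omega_k-\omega_{k'}}\leq C'\omega_F/N$, which is the required bound with $C = C'\omega_F$. The constant must depend only on $\kappa,\beta,\delta$ and $r$, not on $\Gamma$ or the weights.

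\textbf{Step 1: word lengths.} Represent $h$ by a cyclically reduced loop, or more generally a reduced edge path $\gamma = e_1\cdots e_m$ in $\Gamma$ from the base vertex. Because the labellings are reduced, $\abs{\phi_k(h)} = \sum_i k(e_i)$ exactly, and similarly for $k'$. Let $p$ be the number of indices $i$ with $e_i\in\{e_0,\bar e_0\}$. Then $\abs{\phi_{k'}(h)} = \abs{\phi_k(h)} + p$. The girth assumption controls $p$. Between two consecutive occurrences of $e_0^{\pm1}$ in a reduced path, the path either closes a cycle, which has length $\geq N$, or runs through a segment that is not a cycle. The only way to avoid a cycle is the pattern $e_0\,\gamma'\,\bar e_0$ with $\gamma'$ a closed reduced loop, again of length $\geq N-1$. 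A bounded number of occurrences may come from the tail near the basepoint, which is absorbed in $\beta'$. So $p \leq m/(N-1) + 2$, and since all weights are $\geq 1$, $m\leq \abs{\phi_k(h)}$. Hence $\abs{\phi_{k'}(h)} \leq (1+2/N)\abs{\phi_k(h)} + 2$, and trivially $\abs{\phi_k(h)}\leq\abs{\phi_{k'}(h)}$.

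\textbf{Step 2: transfer to $X$.} This is the main obstacle. The quasi-isometry constants $\kappa$ of the orbit map would destroy the multiplicative factor $1+O(1/N)$, producing $\kappa^2$ instead. So we must not pass naively through word length. Instead we exploit the structure of the words. The word $\phi_{k'}(h)$ is obtained from $\phi_k(h)$ by inserting, at each of the $p$ occurrences, one extra letter $a^{\pm1}$ inside a power $a^{k(e_0)}$. In $X$, the geodesic from $o$ to $\phi_{k'}(h)o$ fellow-travels, up to a constant depending on $\delta,\kappa,\beta$ (Morse lemma), the broken path through the orbit points of the prefixes. Cutting $\phi_{k'}(h)$ at the $p$ insertion points and using the triangle inequality gives
\begin{equation*}
	\dist{o}{\phi_{k'}(h)o}\leq \dist{o}{\phi_k(h)o} + p\,(\kappa + D).
\end{equation*}
Here $D$ is a Morse-type constant accounting for the rearrangement. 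Concretely, inserting a letter in the middle of a word $uv$ yields $u a v$, and in a hyperbolic space $\dist{o}{uav\,o}\leq \dist{o}{uv\,o}+ \kappa + D$ whenever $u$ and $v$ are long geodesic-like pieces, by a thin-quadrilateral argument. Combining this with $p\leq 2\abs{\phi_k(h)}/N + 2$ and $\abs{\phi_k(h)}\leq \kappa\,\dist{o}{\phi_k(h)o}+\kappa\beta$ gives the first inequality with $C' = 2\kappa(\kappa+D)$.

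The reverse inequality, deleting letters, follows in the same way with the roles swapped. In that direction the count is $p\leq 2\abs{\phi_{k'}(h)}/N+2$, which gives $\omega_k\leq (1+C'/N)\,\omega_{k'}$. This version suffices as well, so we do not need the sharper monotonicity $\omega_k\leq\omega_{k'}$ asserted earlier. The delicate point to verify is the uniformity of $D$. I would obtain it from the fact that a quasi-geodesic in $X$ obtained from a reduced word changes by a bounded Hausdorff amount when a single letter is inserted. Applying this $p$ times, along successive prefixes, accumulates an error only of order $p$.
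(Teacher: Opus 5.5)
Your argument is essentially the paper's proof. Both cut the reduced word at the $p$ occurrences of $e_0^{\pm1}$, use the Morse lemma so that each cut costs a constant depending only on $(\kappa,\beta,\delta)$, pay at most $\kappa$ per modified block, and use the quasi-isometry and the girth only to bound $p$, so that $\kappa^2$ multiplies only $1/N$. The paper treats both words at once: it compares $d(o,go)$ and $d(o,g'o)$ to the common sum $\sum_i d(o,c_io)+\sum_i d(o,b_io)$ and gets $|d(o,go)-d(o,g'o)|\le p(\kappa+8D_0)$. Your letter-by-letter insertion gives the same estimate.

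The step you call delicate is immediate, but not for the reason you give.
\begin{itemize}
\item The Hausdorff-distance claim is false. The quasi-geodesics attached to $uv$ and $uav$ are not at bounded Hausdorff distance. Already in the Cayley tree, the geodesics $[1,uv]$ and $[1,uav]$ branch right after the modified block, so their Hausdorff distance is about the length of the remaining suffix.
\item What works instead: since $uv$ is reduced, the Morse lemma gives $d(o,uo)+d(uo,uvo)\le d(o,uvo)+2D_0$. The triangle inequality gives $d(o,uavo)\le d(o,uo)+d(o,ao)+d(o,vo)\le d(o,uo)+d(uo,uvo)+\kappa$.
\item Hence $D=2D_0$ works, with no length condition on $u$ or $v$.
\item Deletion is identical: use the Morse lemma for $uav$ together with $d(o,uo)\le d(o,uao)+\kappa$.
\end{itemize}

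Two smaller slips. Your inequality (b) holds in the word metric but is not justified in $X$ with a uniform additive constant. You also apply \autoref{res: comparing growth} backwards: (a) yields $\omega_k\le(1+C'/N)\,\omega_{k'}$, and (b) would yield $\omega_{k'}\le\omega_k$, not the reverse. Both slips are harmless once you use the two-sided estimate, as you end up doing.
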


\begin{proof}
	We denote by $\phi, \phi' \colon H \to F$ the monomorphisms associated to the labelling induced by the weights $k$ and $k'$ respectively.
	We claim that for every $h \in H$,
	\begin{equation*}
	    \dist o{\phi(h)o} \leq \left(1 + \frac{C_1}{N}\right)\dist o{\phi'(h)o} + C_2
	\end{equation*}
	where $C_1, C_2 \in \R_+^*$ do not depend on $h \in H$, nor on $\Gamma$.
	
	Let $h \in H$.
	As an element of the fundamental group of $\Gamma$ it can be represented by a non-backtracking oriented edge path $\gamma$ of length $n$.
	We decompose $\gamma$ as
	\begin{equation*}
	    \gamma = \gamma_0 f_1 \gamma_1 \dots f_p \gamma_p,
	\end{equation*}
	where $f_i$ is either $e_0$ or $\bar{e}_0$ and $\gamma_i$ is a path that does not cross $e_0$ or $\bar{e}_0$.
	If $f_i$ and $f_{i+1}$ have opposite orientation (\resp the same orientation), then $\gamma_i$ (\resp $f_i \cup \gamma_i$) is a non-trivial loop.
	Since $\Gamma$ has girth at least $N$, there are at least $N-1$ edges in $\gamma_i$ provided $i \notin\{0, p\}$, hence
	\begin{equation}
	\label{eqn: comparing growth for same change label - card}
	    p + (p-1)(N-1) \leq n \quad \text{i.e.} \quad p \leq \frac{n-1}{N} + 1.
	\end{equation}
	We set $g = \phi(h)$ and write $b_i$ (\resp $c_i$) for the element of $F$ labelling the edge $f_i$ (\resp the path $\gamma_i$).
	Note that $b_i = w_0(e_0)^{\pm k(e_0)}$.
	For $i \in \intvald{0}{p}$ let
	\begin{equation*}
	    x_{2i} = c_0 b_1 c_1 \cdots b_i o \quad \text{and} \quad x_{2i+1} = c_0 b_1 c_1 \cdots b_i c_i o,
	\end{equation*}
	with the convention that the empty word is trivial so that $x_0 = o$.
	% \begin{com}DTW:
	% The indexing here seems inconsistent at $i=0$, since the displayed formula for $x_{2i}$ does not match the convention $x_0=o$.
	% Maybe write the sequence as
	% \[
	% x_0=o,\quad x_1=c_0o,\quad x_2=c_0b_1o,\quad x_3=c_0b_1c_1o,\ldots
	% \]
	% and then give the general formula for $i\ge1$. \\
	% RC. It looks correct to me: $x_i$ uses the first $i$ letters of $c_0b1c_1\dots$.
	% Hence if $i = 0$, the prefix is empty and we get $o$.
	% This is what the convention below is supposed to clarify.
	% \end{com}
	By the Morse Lemma there is $D_0 \in \R_+^*$ (depending only on $\kappa$, $\beta$, $\delta$) and a sequence
	$y_0 = x_0, y_1, \dots, y_{2p+1} = x_{2p+1}$
	of points ordered in this way along $\geo o{go}$ and such that $\dist{x_i}{y_i} \leq D_0$.
	It follows from the triangle inequality that
	\begin{equation*}
	    \abs{\dist o{go} - \left(\sum_{i=0}^p \dist o{c_i o} + \sum_{i=1}^p \dist o{b_i o}\right)} \leq 4pD_0.
	\end{equation*}
	Proceeding similarly for $\phi'(h)$ and using $\dist{b_i o}{b'_i o} \leq \kappa$, we get
	\begin{equation}
	\label{eqn: comparing growth for same change label - diff}
	    \abs{\dist o{go} - \dist o{g'o}} \leq p D_1, \quad \text{where } D_1 = \kappa + 8D_0.
	\end{equation}
	% \begin{com}DTW:
	% Regarding the constant $D_1$.
	% We compare both $\dist o{go}$ and $\dist o{g'o}$ to corresponding sums, and each comparison seems to contribute an error of order $4pD_0$.
	% Is there a cancellation? Is the constant instead
	% something like $\kappa+8D_0$.
	% The value should be right if we declare it.\\
	% RC. Good catch. Fixed.
	% \end{com}
	Since the orbit map is a quasi-isometric embedding,
	\begin{equation}
	\label{eqn: comparing growth for same change label - n}
	    n \leq \kappa \min\left\{\dist o{go}, \dist o{g'o}\right\} + \kappa\beta.
	\end{equation}
	% \begin{com}DTW:
	% From
	% \[
	% \dist{o}{go}\ge \kappa^{-1}|g|-\beta
	% \]
	% one gets $|g|\le \kappa\dist{o}{go}+\kappa\beta$.
	% Since the reduced word length is at least $n$, the additive term might be $\kappa\beta$. \\
	% RC. fixed.
	% \end{com}
	Our claim now follows from \eqref{eqn: comparing growth for same change label - card}, \eqref{eqn: comparing growth for same change label - diff}, and \eqref{eqn: comparing growth for same change label - n}.
	According to \autoref{res: comparing growth}, we obtain
	\begin{equation*}
	    \omega_{k'} \leq \left(1 + \frac{\kappa D_1}{N}\right)\omega_k,
	\end{equation*}
	and since $\omega_k \leq \omega_F$, we get 
	\begin{equation*}
		\omega_{k'} - \omega_k \leq \frac{\kappa D_1 \omega_F}N
	\end{equation*}
	The other inequality is obtained by symmetry.
\end{proof}

\begin{lemm}
\label{res: all alpha achievable}
There is $C \in \R_+^*$, which does not depend on $\Gamma$, such that for every $\alpha \in \intval{0}{\omega_F}$, there is a weight $k$ on $\Gamma$ such that $\abs{\omega_k - \alpha} \leq C/N$.
\end{lemm}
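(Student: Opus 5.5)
The plan is a discrete intermediate value argument along a path of weights, in which each step changes the weight of a single edge pair by one. \autoref{res: comparing growth for same change label} then says that consecutive growth rates along the path differ by at most $C/N$. We start from the constant weight $k_0 \equiv 1$. Its labelling is $w_0$, so $H_{k_0}$ has finite index in $F$ and $\omega_{k_0} = \omega_F$.

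The main point is to find a weight $k_\infty$ with $\omega_{k_\infty} \leq C/N$, using a constant that does not depend on $\Gamma$. I would take $k_\infty \equiv m$, the constant weight, with $m$ large. Let $h \in H$ be represented by a reduced loop $\gamma$ of length $n$ in $\Gamma$. Its image $\phi_{k_\infty}(h)$ is a reduced word of length $mn$, because the labelling is reduced. By the quasi-isometric embedding,
\begin{equation*}
	\dist o{\phi_{k_\infty}(h)o} \geq \kappa^{-1} m n - \beta .
\end{equation*}
Comparing with $\phi_{k_0}(h)$, whose word length is $n$, we get
\begin{equation*}
	\dist o{\phi_{k_0}(h)o} \leq \kappa n \leq \frac{\kappa^2}{m}\left(\dist o{\phi_{k_\infty}(h)o} + \beta\right).
\end{equation*}
\autoref{res: comparing growth} then gives $\omega_{k_\infty} \leq \kappa^2 \omega_F / m$. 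Choosing $m \geq \kappa^2 N$ yields $\omega_{k_\infty} \leq \omega_F/N$, which is of the form $C/N$ with $C$ depending only on $\kappa$ and $\omega_F$, not on $\Gamma$.

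Next we order the edge pairs $\{e,\bar e\}$ of $\Gamma$. We pass from $k_0$ to $k_\infty$ through a finite sequence of weights $k_0 = \kappa_0, \kappa_1, \dots, \kappa_L = k_\infty$. Each step increases the weight of one edge pair by exactly $1$ and leaves the others unchanged; concretely, we raise the first pair from $1$ to $m$, then the second, and so on. By \autoref{res: comparing growth for same change label}, $\abs{\omega_{\kappa_{j+1}} - \omega_{\kappa_j}} \leq C/N$ for every $j$.

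Now fix $\alpha \in \intval 0{\omega_F}$. If $\alpha \leq \omega_{k_\infty}$, then $k_\infty$ works, since $\abs{\omega_{k_\infty} - \alpha} \leq \omega_{k_\infty} \leq \omega_F/N$. Otherwise $\alpha \in \intval{\omega_{\kappa_L}}{\omega_{\kappa_0}}$. Let $j$ be the largest index with $\omega_{\kappa_j} \geq \alpha$; it exists because $\omega_{\kappa_0} = \omega_F \geq \alpha$. If $j = L$, then $\alpha \leq \omega_{\kappa_L}$, so $\alpha = \omega_{\kappa_L}$ and we are done. If $j < L$, then $\omega_{\kappa_{j+1}} < \alpha \leq \omega_{\kappa_j}$, and therefore $\abs{\omega_{\kappa_j} - \alpha} \leq C/N$. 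Taking the maximum of the two constants gives the lemma.

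The only delicate step is the uniform upper bound on $\omega_{k_\infty}$, because the constant must be independent of $\Gamma$. This works since the bound relies only on the word-length stretching by the factor $m$ and on the quasi-isometry constants.
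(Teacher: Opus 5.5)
Your proof is correct and follows the paper's argument. You start from the constant weight $1$ (growth $\omega_F$), show via word-length stretching and \autoref{res: comparing growth} that a large constant weight $m$ has small growth, then move between the two one edge pair at a time, applying \autoref{res: comparing growth for same change label} in a discrete intermediate value argument. The differences are minor. You compare with the action via $\phi_{k_0}$ instead of the tree $T$, which gives $\kappa^2\omega_F/m$ rather than $\kappa\,\omega(H,T)/m$. Taking $m \geq \kappa^2 N$ covers small $\alpha$, including $\alpha = 0$, directly instead of reducing to $\alpha > 0$. Naming the weights $\kappa_j$ clashes with the quasi-isometry constant $\kappa$.
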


\begin{proof}
	We denote by $C$ the constant given by \autoref{res: comparing growth for same change label}.
	Let $T$ be the universal cover of $\Gamma$ (endowed with the metric where each edge has length one) on which $H$ acts by isometries.
	Let $k \colon E \to \N\setminus\{0\}$ be the constant weight equal to one.
	Note that $\omega_k = \omega_F$.
	For every $m \in \N \setminus\{0\}$, denote by $mk$ the constant weight equal to $m$.
	We claim that $\omega_{mk}$ converges to $0$ as $m$ tends to infinity.
	Fix $m \in \N \setminus \{0\}$ and let $\phi \colon H \to F$ be the monomorphism associated to the labelling induced by $mk$.
	By construction there is a point $x \in T$ such that
	\begin{equation*}
		\abs{\phi(h) - 1}= m\dist[T]x{hx}, \quad \forall h \in H.
	\end{equation*}
	Since the orbit map $F \to X$ is a quasi-isometric embedding we get for every $h \in H$,
	\begin{equation*}
		m \dist[T]x{hx} \leq \kappa \dist[X] o{\phi(h)o} + \kappa\beta
	\end{equation*}
	It follows then from \autoref{res: comparing growth} that $\omega_{mk} \leq \kappa \omega(H,T) / m$, whence our claim.
	
	Let $\alpha \in \intval 0{\omega_F}$.
	Without loss of generality we can assume that $\alpha > 0$.
	In view of the above discussion, there is a weight $k_0$ such that $\omega_{k_0} \leq \alpha$.
	Starting from $k_0$, we can produce by induction a sequence of weights $k_0, k_1, k_2, \dots$ as follows: for $i  \in \N$, pick an edge $e_i \in E$ such that $k_i(e_i) \geq 2$ and define $k_{i+1}$ so that $k_i$ and $k_{i+1}$ coincides on $E \setminus \{ e_i, \bar e_i\}$ while $k_{i+1}(e_i) = k_i(e_i) - 1$ and  $k_{i+1}(\bar e_i) = k_i(\bar e_i) - 1$.
	After finitely many step we reach the weight $k_n = k$ which is constant, equal to one, and whose corresponding growth rate is $\omega_k = \omega_F$.
	Hence, there is $i \in \intvald 0{n-1}$ such that $\alpha$ lies in between $\omega_{k_i}$ and $\omega_{k_{i+1}}$.
	According to \autoref{res: comparing growth for same change label}, the distance between these two growth rates is at most $C/N$, whence the result.
\end{proof}

\medskip\noindent
{\bfseries Conclusion.}
Since $C$ does not depend on $\Gamma$, we can apply the construction to a graph $\Gamma$ with arbitrarily large girth.
This gives a way to approximate any $\alpha \in \intval{0}{\omega_F}$ with arbitrary precision by the growth rate of a finitely generated subgroup of $F$, completing the proof of the density part of \autoref{res: main new}.
\begin{rema*}
If the metric on $X$ is explicit (e.g.\ $X$ is the Cayley graph of $F$ with respect to a free basis), then the argument can be turned into an effective algorithm to compute a finitely generated subgroup whose growth rate approximates $\alpha$ with a prescribed precision.
\end{rema*}

% -------------------------------------------------------
\section{Full spectrum}
\label{sec:full_spectrum}
% -------------------------------------------------------

The density part of \autoref{res: main new} being established for free groups, we now prove that the growth spectrum is the full interval $[0, \omega_G]$.
The proof is inspired by the work of the last three authors in \cite{louvarisSubgroupsFreeGroup2025}.
Nevertheless, as explained in this section, the argument works for a general action.

\subsection{Hyperbolic geometry}

This part will need some more advanced features of hyperbolic geometry.
We start with a quick review of this matter.
We only give references for quantitative statements.
For a general introduction, we refer the reader to Gromov's original article \cite{Gromov:1987tk} or \cite{Coornaert:1990tj, Ghys:1990ki, Bridson:1999ky}.

\medskip\noindent
{\bfseries The four point inequality.}
Let $(X,d)$ be a proper, geodesic, metric space.
The Gromov product of three points $x,y,z \in X$ is
\begin{equation*}
    \gro xyz = \frac{1}{2}\left[\dist xz + \dist yz - \dist xy\right].
\end{equation*}
Let $\delta \in \R_+$.
For the remainder of this section we assume that $X$ is $\delta$-hyperbolic, i.e.\
\begin{equation}
\label{eqn: four point hyp}
    \min\left\{\gro xyt, \gro yzt\right\} \leq \gro xzt + \delta, \quad \forall x,y,z,t \in X.
\end{equation}

\medskip\noindent
{\bfseries Quasiconvex subsets.}
A \emph{projection} of a point $x \in X$ on a subset $Y \subset X$ is a point $y \in Y$ such that $\dist xy = d(x,Y)$.
Let $\alpha \in \R_+$.
A subset $Y \subset X$ is \emph{$\alpha$-quasiconvex} if for every $x \in X$, for every $y,y' \in Y$ we have $d(x,Y) \leq \gro y{y'}x + \alpha$.

\begin{lemm}[see for instance {\cite[lem~2.12]{Coulon:2014fr}}]
\label{res: proj qc}
	Let $Y$ be an $\alpha$-quasiconvex subset of $X$.
	Let $x,x' \in X$ and $p,p'$ be respective projections of $x$ and $x'$ on $Y$.
	Then the following holds:
	\begin{enumerate}
	    \item $\gro xyp \leq \alpha$, for every $y \in Y$;
	    \item $\dist p{p'} \leq \max\left\{\dist x{x'} - \dist xp - \dist x{p'} + 2\epsilon, \epsilon\right\}$, where $\epsilon = 2\alpha + \delta$.
	\end{enumerate}
\end{lemm}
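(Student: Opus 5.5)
The plan is to derive both items directly from the definition of $\alpha$-quasiconvexity and the four point inequality~\eqref{eqn: four point hyp}. The only tool beyond these is the elementary identity
\begin{equation*}
	\gro yzx + \gro xzy = \dist xy, \quad \forall x,y,z \in X,
\end{equation*}
which follows by expanding the Gromov products. In the second item I read the term $\dist x{p'}$ as $\dist{x'}{p'}$, which is what the symmetric argument below produces.

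\emph{Item (1).} Let $y \in Y$. I would apply the quasiconvexity of $Y$ with the pair $y, p \in Y$ and the point $x$. Since $p$ is a projection, this gives
\begin{equation*}
	\dist xp = d(x,Y) \leq \gro ypx + \alpha.
\end{equation*}
The identity above gives $\gro xyp = \dist xp - \gro ypx$, so $\gro xyp \leq \alpha$.

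\emph{Item (2).} I would split according to whether $\dist p{p'} \leq \epsilon$; if so there is nothing to prove. Otherwise, assume $\dist p{p'} > \epsilon = 2\alpha + \delta$.

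First, item (1) applied to $x'$, its projection $p'$ and the point $p \in Y$ gives $\gro{x'}p{p'} \leq \alpha$. By the identity this means $\gro{x'}{p'}p = \dist p{p'} - \gro{x'}p{p'} \geq \dist p{p'} - \alpha > \alpha + \delta$. Item (1) applied to $x$, $p$ and $p' \in Y$ gives $\gro x{p'}p \leq \alpha$.

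Next, I would use the four point inequality with base point $p$:
\begin{equation*}
	\min\left\{\gro x{x'}p, \gro{x'}{p'}p\right\} \leq \gro x{p'}p + \delta \leq \alpha + \delta.
\end{equation*}
Since the second term of the minimum exceeds $\alpha + \delta$, this forces $\gro x{x'}p \leq \alpha + \delta$.

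Finally, I would unfold the Gromov products into distances. From $\gro x{x'}p \leq \alpha+\delta$ we get $\dist x{x'} \geq \dist xp + \dist{x'}p - 2\alpha - 2\delta$. From $\gro{x'}p{p'} \leq \alpha$ we get $\dist{x'}p \geq \dist{x'}{p'} + \dist p{p'} - 2\alpha$. Combining the two yields
\begin{equation*}
	\dist x{x'} \geq \dist xp + \dist{x'}{p'} + \dist p{p'} - 2\epsilon,
\end{equation*}
which is the claimed bound in this case.

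The only delicate point is choosing the right base point and triple in the four point inequality so that the lower bound on $\gro{x'}{p'}p$ eliminates the unwanted branch of the minimum. The threshold $\epsilon = 2\alpha+\delta$ is exactly what makes that branch impossible. Everything else is bookkeeping with Gromov products.
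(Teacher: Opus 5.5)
Your proof is correct, and it is essentially the standard argument behind the cited result (the paper gives no proof of its own and only points to Coulon's Lemma~2.12). There, item (1) comes directly from quasiconvexity, and item (2) comes from item (1) plus a single four point inequality based at $p$, which the source packages as a general four-point lemma that you have simply unfolded. Your reading of $d(x,p')$ as $d(x',p')$ is in fact forced: the literal inequality already fails for $x,x'\in Y$ far apart, since then $p=x$, $p'=x'$, and the right-hand side equals $2\epsilon$.
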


In particular the projection on a closed $\alpha$-quasiconvex subset $Y$ is \emph{large-scale $1$-Lipschitz} in the sense that for every $x,x' \in X$, for every $p,p' \in Y$, respective projections of $x$ and $x'$ on $Y$ we have

\begin{equation*}
    \dist p{p'} \leq \dist x{x'} + 4\alpha + 2\delta.
\end{equation*}
Geodesics are $3\delta$-quasiconvex, see \cite[lem~2.24]{Coulon:2014fr}.
The $A$-neighborhood of an $\alpha$-quasiconvex subset is $2\delta$-quasiconvex whenever $A \geq \alpha$, see \cite[lem~2.13]{Coulon:2014fr}.

\medskip\noindent
{\bfseries Boundary at infinity.}
We denote by $\partial X$ the boundary at infinity of $X$ and set $\bar{X} = X \cup \partial X$.
The definition of the Gromov product $\gro xyz$ extends to triples where $x,y \in \bar{X}$ and $z \in X$.
Let $x \in X$ and $\xi \in \partial X$.
For every $L \in \R_+$, we let
\begin{equation*}
    V_\xi(x,L) = \set{z \in \bar{X}}{\gro \xi zx > L}.
\end{equation*}
As $L$ runs over $\R_+$, these sets form a neighborhood basis of $\xi$.
If $(x_n)$ and $(y_n)$ are two sequences converging to $x$ and $y$ respectively, the four point inequality yields
\begin{equation}
\label{eqn: estimate gromov product at infinity}
    \gro xyz \leq \liminf_{n \to \infty} \gro{x_n}{y_n}z \leq \limsup_{n \to \infty} \gro{x_n}{y_n}z \leq \gro xyz + 2\delta.
\end{equation}

\begin{lemm}
\label{res: proj vs gromov product}
	Let $\rho \colon \R_+ \to X$ be a geodesic ray from $x \in X$ to $\xi \in \partial X$.
	Let $y \in X$ and $p$ a projection of $y$ on $\rho$.
	Then $\abs{\dist xp - \gro \xi yx} \leq 7\delta$.
\end{lemm}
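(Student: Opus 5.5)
The plan is to reduce the statement to finite points on the ray, apply \autoref{res: proj qc} to the geodesic ray $\rho$ (which is $3\delta$-quasiconvex), and then pass to the limit using \eqref{eqn: estimate gromov product at infinity}.

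For $t \in \R_+$ large, set $z_t = \rho(t)$. Since $x$ and $z_t$ lie on $\rho$, we have $\dist x{z_t} = t$. We may assume $t$ is much larger than $\dist xp$.

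\textbf{Step 1: estimate $\gro {z_t}yx$ in terms of $\dist xp$.} First apply \autoref{res: proj qc}(1) with $\alpha = 3\delta$, taking the points $x$ and $z_t$ of $Y = \rho$. This gives $\gro y{x}p \leq 3\delta$ and $\gro y{z_t}p \leq 3\delta$. Explicitly:
\begin{itemize}
\item $\dist yx \geq \dist yp + \dist px - 6\delta$;
\item $\dist y{z_t} \geq \dist yp + \dist p{z_t} - 6\delta$.
\end{itemize}
The triangle inequality gives the reverse bounds $\dist yx \leq \dist yp + \dist px$ and $\dist y{z_t} \leq \dist yp + \dist p{z_t}$.

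Since $p$ lies on $\rho$ between $x$ and $z_t$, we have $\dist x{z_t} = \dist xp + \dist p{z_t}$. Expanding
\begin{equation*}
2\gro{z_t}yx = \dist x{z_t} + \dist xy - \dist y{z_t},
\end{equation*}
the terms $\dist yp$ and $\dist p{z_t}$ cancel. Using the two-sided bounds above, we obtain
\begin{equation*}
\abs{\gro{z_t}yx - \dist xp} \leq 6\delta,
\end{equation*}
or even a bit better, since each error is one-sided. Indeed the upper bound is $\gro{z_t}yx \leq \dist xp + 3\delta$, and the lower bound is $\gro{z_t}yx \geq \dist xp - 3\delta$.

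\textbf{Step 2: pass to the limit $t \to \infty$.} Apply \eqref{eqn: estimate gromov product at infinity} with the constant sequence $y$ and with $z_t \to \xi$. This yields
\begin{equation*}
\gro \xi yx \leq \liminf_t \gro{z_t}yx \leq \limsup_t \gro{z_t}yx \leq \gro \xi yx + 2\delta.
\end{equation*}
Combining this with Step 1 gives
\begin{equation*}
\dist xp - 3\delta - 2\delta \leq \gro \xi yx \leq \dist xp + 3\delta,
\end{equation*}
and hence $\abs{\dist xp - \gro\xi yx} \leq 5\delta \leq 7\delta$.

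The only delicate point is Step 1. There we need the quasiconvexity of $\rho$ as a whole, rather than only of a finite geodesic segment, and we need $p$ to lie between $x$ and $z_t$. For the second issue, it suffices to take $t > \dist xp$.

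If one prefers to avoid relying on \autoref{res: proj qc} for the infinite ray, there is an alternative. Note that $p$ is also a projection of $y$ onto the segment $\rho([0,t])$ once $t$ is large, and that geodesic segments are $3\delta$-quasiconvex. This gives the same bounds with the same constants.

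The slack between the $5\delta$ obtained and the stated $7\delta$ absorbs any extra constant coming from the convention used for quasiconvexity of rays.
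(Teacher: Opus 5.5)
Your argument is correct and is essentially the paper's proof. The paper also uses the identity $\dist xp + \gro yzp = \gro zyx + \gro yxp + \gro xzp$ (your expansion of $2\gro{z_t}yx$ is this identity). It combines it with the bounds $\gro yxp \leq 3\delta$ and $\gro y\xi p \leq 3\delta$ from the $3\delta$-quasiconvexity of $\rho$, and with $\gro x\xi p = 0$, before passing to the limit via \eqref{eqn: estimate gromov product at infinity}.

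The only difference is bookkeeping. You take $z = \rho(t)$, so that $\gro x{z}p = 0$ holds exactly, and you bound before taking the limit. As a result you pay the $2\delta$ from the limit only once, and you get $5\delta$ instead of $7\delta$.
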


\begin{proof}
	For every $z \in X$ we have
	\begin{equation*}
	    \dist xp + \gro yzp = \gro zyx + \gro yxp + \gro xzp.
	\end{equation*}
	Passing to the limit as $z$ converges to $\xi$ and using \eqref{eqn: estimate gromov product at infinity}, we get
	\begin{equation*}
	    \abs{\dist xp - \gro \xi yx} \leq \max\left\{\gro y\xi p, \gro yxp + \gro x\xi p\right\} + 4\delta.
	\end{equation*}
	Since $p$ is a projection of $y$ on $\rho$, both $\gro y\xi p$ and $\gro yxp$ are bounded above by $3\delta$, while $\gro x\xi p = 0$.
\end{proof}

\begin{lemm}
\label{res: convex hull in neighborhood}
	Let $\xi \in \partial X$, $x \in X$ and $\ell \in \R_+$.
	Any point lying on a geodesic between two distinct points of $V_\xi(x, \ell)$ belongs to $V_\xi(x, \ell - 4\delta)$.
\end{lemm}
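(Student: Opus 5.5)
Let $y, y' \in V_\xi(x,\ell)$ be distinct, and let $z$ lie on a geodesic $\geo y{y'}$. We must show $\gro \xi z x > \ell - 4\delta$. The plan is to combine two applications of the four point inequality: one saying that $z$ is ``between'' $y$ and $y'$ as seen from $x$, and one propagating the largeness of the Gromov product from $y,y'$ to $z$ through $\xi$.

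\textbf{Step 1 (the point $z$ is far from $x$ relative to $y,y'$).} Since $z \in \geo y{y'}$ we have $\gro y{y'}z = 0$. A direct computation gives
\begin{equation*}
	\gro yzx + \gro {y'}zx = \dist xz + \tfrac12\bigl[\dist xy + \dist x{y'} - \dist yz - \dist z{y'}\bigr] - \dist xz \cdot 0,
\end{equation*}
which simplifies, using $\dist yz + \dist z{y'} = \dist y{y'}$, to $\gro yzx + \gro{y'}zx = \dist xz + \gro y{y'}x - \dist x z + \dist xz$. Rather than rely on this identity, I will use the cleaner standard fact that $\min\{\gro yzx, \gro{y'}zx\} \geq \gro y{y'}x$ up to zero error. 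To see it, write $\gro yzx = \dist xz - \gro y x z$ and note $\gro yxz + \gro {y'}xz \le \dist xz$ is not what we need. Instead, observe that $\gro yzx \geq \gro y{y'}x$ is equivalent to $\dist xz - \dist yz \geq \dist x{y'} - \dist y{y'}$, i.e.\ $\dist x{y'} \le \dist xz + \dist z{y'}$, which is the triangle inequality. Symmetrically $\gro{y'}zx \ge \gro y{y'}x$.

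\textbf{Step 2 (chaining through $\xi$).} By the four point inequality at $x$, $\gro y{y'}x \geq \min\{\gro \xi y x, \gro \xi{y'}x\} - \delta$ up to the boundary correction. Since $\xi \in \partial X$, I will approximate $\xi$ by a sequence $\xi_n \to \xi$ in $X$ and use \eqref{eqn: estimate gromov product at infinity}: for large $n$, $\gro{\xi_n}yx > \ell$ and $\gro{\xi_n}{y'}x > \ell$ (using the liminf bound, which loses nothing). Then $\gro y{y'}x \ge \ell - \delta$, so by Step 1, $\gro yzx > \ell - \delta$ strictly (keeping strictness from $y\in V_\xi(x,\ell)$ carefully). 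Another four point inequality gives $\gro{\xi_n}zx \ge \min\{\gro{\xi_n}yx, \gro yzx\} - \delta > \ell - 2\delta$. Passing to the limit and applying the left inequality of \eqref{eqn: estimate gromov product at infinity} costs at most $2\delta$, giving $\gro \xi z x \geq \ell - 4\delta$.

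\textbf{Main obstacle.} The only delicate point is bookkeeping of strict versus non-strict inequalities and the $2\delta$ loss at infinity: the limit step naturally yields $\geq \ell - 4\delta$, whereas membership in $V_\xi(x,\ell-4\delta)$ requires strict inequality. I will handle this by first fixing $\eta>0$ with $\gro\xi yx, \gro\xi{y'}x > \ell+\eta$ (possible since the inequalities are strict and there are only two points), running the argument with $\ell+\eta$, and concluding $\gro\xi zx \ge \ell+\eta-4\delta > \ell-4\delta$. If $y$ or $y'$ lies in $\partial X$, the same argument applies after approximating them by points along the (bi-infinite or ray) geodesic, again using \eqref{eqn: estimate gromov product at infinity}; I would check the constants there but expect no extra loss since Step 1 is exact.
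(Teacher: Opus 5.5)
Your argument is correct and is essentially the paper's proof. It uses the same two applications of the four point inequality around the exact triangle-inequality bound $\gro yzx \geq \gro y{y'}x$ for $z$ on a geodesic from $y$ to $y'$. The only difference is where the $2\delta$ loss at infinity is paid: you pay it when approximating $\xi$ by interior points, which is why you need the $\eta$-trick for strictness, while the paper applies the four point inequality directly with $\xi$ and pays $2\delta$ for possibly ideal endpoints.

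Two small corrections. First, the final limit uses the right-hand inequality $\limsup_n \gro{\xi_n}zx \leq \gro \xi zx + 2\delta$ of \eqref{eqn: estimate gromov product at infinity}, not the left one. Second, you should write out the case of endpoints in $\partial X$, since it is the case used later for the axis of $h$ in \autoref{res: lower bound displacement}. It does go through with no extra loss: take $y_m, y'_m$ on the geodesic, and use the left-hand inequality to get $\gro{\xi_n}{y_m}x, \gro{\xi_n}{y'_m}x > \ell+\eta$ for all large $n,m$.
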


\begin{proof}
	Let $z$ and $z'$ be two distinct points of $V_\xi(x, \ell)$ and $y$ a point on a geodesic from $z$ to $z'$.
	The triangle inequality combined with \eqref{eqn: estimate gromov product at infinity} yields $\gro z{z'}x \leq \gro zyx + 2\delta$.
	Using the four point inequality we first get 
	\begin{equation*}
		\gro \xi yx 
		\geq \min \left\{ \gro \xi zx, \gro zyx \right\} - \delta
		\geq \min \left\{ \gro \xi zx, \gro z{z'}x \right\} - 3\delta
	\end{equation*}
	and then
	\begin{equation*}
		\gro \xi yx 
		\geq \min \left\{ \gro \xi zx, \gro \xi{z'}x \right\} - 4\delta
		 > \ell - 4\delta. \qedhere
	\end{equation*}
\end{proof}

\medskip\noindent
{\bfseries Group action.}
Let $G$ be a group acting properly by isometries on $X$.
An element $g \in G$ is either \emph{elliptic} (it has bounded orbits), \emph{parabolic} (it has exactly one accumulation point in $\partial X$), or \emph{hyperbolic} (it has exactly two accumulation points in $\partial X$).
The \emph{translation length} and \emph{stable translation length} are respectively
\begin{equation*}
    \norm g = \inf_{x \in X} \dist{gx}x \quad \text{and} \quad \snorm g = \lim_{n \to \infty} \frac{1}{n}\dist{g^n x}x.
\end{equation*}
Given $d \in \R_+$ we define the \emph{characteristic subset}
\begin{equation*}
    \fix{g,d} = \set{x \in X}{\dist x{gx} \leq d}.
\end{equation*}

\begin{lemm}
\label{res: fixed point set}
	Let $g \in G$.
	Let $d \geq 5\delta$ such that $\fix{g,d}$ is non-empty.
	The following holds.
	\begin{enumerate}
	    \item \label{enu: fixed point set - qc} The set $\fix{g,d}$ is $8\delta$-quasiconvex.
	    \item \label{enu: fixed point set - dist} $\dist x{gx} \geq 2d(x, \fix{g,d}) + d - 10\delta$, for every $x \in X \setminus \fix {g,d}$.
	    \item \label{enu: fixed point set - intersection} If $Y$ is a non-empty $\group g$-invariant, closed, $\alpha$-quasiconvex subset of $X$, then $\fix{g,d}$ intersects the $(\alpha + 5\delta)$-neighborhood of $Y$.
	\end{enumerate}
\end{lemm}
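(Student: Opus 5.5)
We prove the three items using only the four point inequality \eqref{eqn: four point hyp} together with \autoref{res: proj qc}, treating $g$ as a fixed isometry. The key quantity is, for $x \in X$, the Gromov product $\gro{g^{-1}x}{gx}x$. Its role is that $\dist x{gx}$ is roughly $2$ times the distance from $x$ to the ``axis'' of $g$ plus the minimal displacement.

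\textbf{Step 1: item (2).} Let $x \notin \fix{g,d}$. Take a geodesic $\geo x{gx}$ and let $m$ be the point on it at distance $\frac12(\dist x{gx} - d) + c\delta$ from $x$, with $c$ a small constant to be tuned.

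We compare $m$ with $gm'$, where $m'$ is the symmetric point on $\geo{g^{-1}x}{x}$, so that $gm'$ lies on $\geo x{gx}$ near $x$. Equivalently, we look at the midpoint region of the broken path $g^{-1}x, x, gx$. Hyperbolicity applied to the points $x, gx, g^2x$ gives $\dist m{gm} \leq d$, so $m \in \fix{g,d}$. Hence
\begin{equation*}
	d(x,\fix{g,d}) \leq \tfrac12(\dist x{gx}-d) + 5\delta,
\end{equation*}
which rearranges to item (2).

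The main obstacle is the precise estimate on $\dist m{gm}$. I would argue as follows. The points $gm$ and $m$ both lie near the geodesic $\geo x{g^2x}$ once $\gro x{g^2x}{gx}$ is controlled. If $\gro x{g^2x}{gx}$ is large, we instead use the point of $\geo x{gx}$ at distance $\gro{g^{-1}x}{gx}x$ from $x$, which is nearly fixed. Handling both regimes with constant $10\delta$ is the delicate bookkeeping.

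\textbf{Step 2: item (1).} Let $y,y' \in \fix{g,d}$ and $z$ on a geodesic $\geo y{y'}$. The function $t \mapsto \dist{\gamma(t)}{g\gamma(t)}$ along $\gamma = \geo y{y'}$ is controlled by comparing the geodesics $\gamma$ and $g\gamma$. These have endpoints at distance at most $d$, so thin quadrilaterals give $\dist z{gz} \leq d + 4\delta$.

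Now apply item (2) to $z$. If $z \notin \fix{g,d}$, then $2d(z,\fix{g,d}) + d - 10\delta \leq d + 4\delta$, so $d(z,\fix{g,d}) \leq 7\delta$. Thus every geodesic between points of $\fix{g,d}$ stays $7\delta$-close to it. Since $\gro y{y'}x \geq d(x,\geo y{y'}) - 4\delta$ in a $\delta$-hyperbolic space, this upgrades to $8\delta$-quasiconvexity after absorbing constants. If the constants do not close exactly, I would refine Step 1's estimate rather than the definition.

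\textbf{Step 3: item (3).} Let $y \in Y$ and set $x = $ a projection of $y$ onto $\fix{g,d}$, which is nonempty and closed. Suppose $d(y,\fix{g,d}) > \alpha + 5\delta$ for all $y \in Y$. By item (2), $\dist y{gy} \geq 2d(y,\fix{g,d}) + d - 10\delta$.

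On the other hand, $gy \in Y$, and projection onto the quasiconvex set $\fix{g,d}$ is $g$-equivariant up to choice. So $gx$ is a projection of $gy$, and $\dist x{gx} \leq d$. By \autoref{res: proj qc}(2) with $\epsilon = 16\delta + \delta$, the path $y \to x \to gx \to gy$ is a quasi-geodesic. Hence the midpoint of $\geo y{gy}$ lies within about $2\delta$ of $\fix{g,d}$.

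That midpoint is also within $\alpha$ of $Y$ by quasiconvexity of $Y$, since $y, gy \in Y$. This gives a point of $\fix{g,d}$ in the $(\alpha + 5\delta)$-neighborhood of $Y$, a contradiction.
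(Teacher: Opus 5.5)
Your Step~1, on which Steps~2 and~3 both rest, has a genuine gap, and it is not a matter of bookkeeping. You assert that the point $m\in\geo x{gx}$ at distance $\frac12(\dist x{gx}-d)+5\delta$ from $x$ satisfies $\dist m{gm}\le d$. This never uses the hypothesis that $\fix{g,d}$ is non-empty, and without that hypothesis it is false: if $\norm g>d\ge 5\delta$, no point of $X$ has displacement at most $d$.

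Write $t=\gro{g^{-1}x}{gx}x=\gro x{g^2x}{gx}$. The tripod comparison in the triangle $(gx,x,g^2x)$ gives $\dist m{gm}\le\dist x{gx}-2\dist xm+4\delta$ whenever $\dist xm\le\min\{t,\frac12\dist x{gx}\}$. So the easy regime is $t$ \emph{large}, and you have the two regimes reversed: when $t$ is small, $m$ and $gm$ lie near $\geo x{g^2x}$ and their distance stays close to $\dist x{gx}$.

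The hard regime is $t<\frac12(\dist x{gx}-d)+5\delta$, i.e.\ $\dist x{g^2x}-\dist x{gx}>d-10\delta$. There the four point inequality only bounds the displacement of the relevant points of $\geo x{gx}$ by about $\dist x{g^2x}-\dist x{gx}+4\delta$, which may exceed $d$. You land in $\fix{g,d+O(\delta)}$, not in $\fix{g,d}$. This is exactly the situation where $d$ is within $O(\delta)$ of $\norm g$, including $d=\norm g$, which the statement allows. Getting below the sharp threshold $d$ needs an argument that actually uses a point of displacement at most $d$, for instance a projection of $x$ on $\fix{g,d}$ together with the minimality of its distance to $x$. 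No choice of the constant $c$ does it.

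The paper does not reprove items~1--2. It quotes \cite[lem~2.8]{Coulon:2018vp} for $d>\max\{\norm g,5\delta\}$ and reaches $d=\norm g$ by letting $d'\downarrow d$, using that the closed sets $\fix{g,d'}$ decrease to $\fix{g,d}$ in the proper space $X$.

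Steps~2 and~3 inherit this gap, and each has a further problem.

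In Step~2, even granting item~2, the bound $d(z,\fix{g,d})\le 7\delta$ combined with $d(x,\geo y{y'})\le\gro y{y'}x+4\delta$ yields only $11\delta$-quasiconvexity. There is nothing to absorb, so you do not reach the stated $8\delta$.

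In Step~3, the claim that the midpoint of $\geo y{gy}$ is about $2\delta$ from $\fix{g,d}$ is unsupported, and \autoref{res: proj qc}(2) is not what gives the quasi-geodesic behaviour. The paper's argument is direct and needs no contradiction. Let $p$ be a projection of $y\in Y\setminus\fix{g,d}$ on $\fix{g,d}$; the case $y\in\fix{g,d}$ is trivial. Item~2 and $\dist p{gp}\le d$ give
\begin{equation*}
	\gro y{gy}p\leq\frac12\left[2\dist yp+\dist p{gp}-\dist y{gy}\right]\leq 5\delta,
\end{equation*}
and the $\alpha$-quasiconvexity of $Y$ then gives $d(p,Y)\le\gro y{gy}p+\alpha\le\alpha+5\delta$.
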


% \begin{com}DTW:
% 	Please check item~\ref{enu: fixed point set - dist} against the cited lemma.
% 	It seems the estimate cannot hold for arbitrary $d\ge 5\delta$.
% 	Indeed, in an $\mathbb R$-tree, let $g$ translate an axis by length $L$. 
% 	If $d>L$ and $x$ lies on the axis, then $\dist{x}{gx}=L$, while $d(x,\fix{g,d})=0$, so the displayed lower bound would imply $L\ge d$.
% 	Maybe the lower bound involves $\|g\|$, rather than $d$, or else $d$ should be fixed to a canonical value such as $d_0=\max\{\|g\|,5\delta\}$.
% 	This matters as \autoref{res: lower bound displacement} uses item~\ref{enu: fixed point set - dist}. \\
% 	RC. Fixed. The inequality is correct… provided $x$ is not already in the characteristic set.
% \end{com}

\begin{proof}
	Items~\ref{enu: fixed point set - qc} and \ref{enu: fixed point set - dist} are proved in \cite[lem~2.8]{Coulon:2018vp} when $d > \max\{\norm g, 5\delta\}$; the general case follows by continuity.
	% \begin{com}DTW:
	% I do not see how the ``general case follows by continuity'' for the estimate in item~\ref{enu: fixed point set - dist}, since the statement appears false when $d$ is larger than the translation length.
	% Should we quote the statement of \cite[lem~2.8]{Coulon:2018vp} here, or rewrite the lemma using the parameter that appears in that cited statement? \\
	% RC. Good catch. Fixed.
	% \end{com}
	For~\ref{enu: fixed point set - intersection}, consider a point $y \in Y$ and let $p$ be its projection onto $\fix{g,d}$.
	Then $\gro y{gy}p \leq 5\delta$ by~\ref{enu: fixed point set - dist}.
	It follows from the quasi-convexity of $Y$ that $p$ belongs to the $(\alpha + 5\delta)$-neighborhood of $Y$.
\end{proof}

An element $g \in G$ is hyperbolic if and only if $\snorm g > 0$.
In such a case the accumulation points of $g$ in $\partial X$ are
\begin{equation*}
    \xi_g = \lim_{n \to \infty} g^{-n}x \quad \text{and} \quad \xi_g' = \lim_{n \to \infty} g^n x.
\end{equation*}
If $\gamma \colon \R \to X$ is a bi-infinite geodesic from $\xi_g$ to $\xi_g'$, then
\begin{equation}
\label{eqn: translating biinfinite geodesics}
    \dist{g^n \gamma(t)}{\gamma(t + n\snorm g)} \leq 20\delta, \quad \forall n \in \Z, \ \forall t \in \R,
\end{equation}
see \cite[lem~2.11]{Coulon:2018vp}.

\begin{lemm}
\label{res: fixed point set large translation}
	Let $g \in G$ be hyperbolic with $\norm g > 8\delta$.
	Let $\gamma \colon \R \to X$ be a bi-infinite geodesic from $\xi_g$ to $\xi_g'$.
	For every $d \geq \norm g$, the set $\fix{g,d}$ is contained in the $A$-neighborhood of $\gamma$, where $A = \frac{1}{2}(d - \norm g) + 13\delta$.
\end{lemm}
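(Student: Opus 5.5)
The plan is to combine \autoref{res: fixed point set}\ref{enu: fixed point set - dist} with the fact that $g$ translates $\gamma$ by $\snorm g$ up to $20\delta$, see \eqref{eqn: translating biinfinite geodesics}. Let $x \in \fix{g,d}$ and let $p = \gamma(t)$ be a projection of $x$ on $\gamma$. We want to show $\dist xp \leq A$. If $\dist xp$ is small, say at most $13\delta$, there is nothing to do. Otherwise we lower-bound $\dist x{gx}$ in terms of $\dist xp$ and compare with $\dist x{gx} \leq d$.

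\textbf{Step 1: a lower bound on the displacement.} The point $gp$ is a projection of $gx$ on $g\gamma$. By \eqref{eqn: translating biinfinite geodesics}, $gp$ lies within $20\delta$ of $\gamma(t+\snorm g)$. A cleaner route is to work with the set $\fix{g,d_0}$ where $d_0 = \norm g + 5\delta$ (so $d_0 \geq 5\delta$ and it is non-empty). The geodesic $\gamma$ is $3\delta$-quasiconvex and, up to $20\delta$, $\langle g\rangle$-invariant. I would therefore apply \autoref{res: fixed point set}\ref{enu: fixed point set - intersection} to the $20\delta$-neighbourhood $Y$ of $\gamma$, or rather to the closure of the union of the $g^n\gamma$, which is $g$-invariant and quasiconvex with constant of order $\delta$. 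This places points of $\fix{g,d_0}$ near $\gamma$.

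\textbf{Step 2: from $d_0$ to $d$.} The key input is \autoref{res: fixed point set}\ref{enu: fixed point set - dist} applied with parameter $d_0$. For $x \in \fix{g,d}\setminus\fix{g,d_0}$ it gives
\begin{equation*}
	d \geq \dist x{gx} \geq 2 d(x,\fix{g,d_0}) + d_0 - 10\delta,
\end{equation*}
hence
\begin{equation*}
	d(x,\fix{g,d_0}) \leq \tfrac12(d - \norm g) + \tfrac52\delta.
\end{equation*}
It then suffices to show that $\fix{g,d_0}$ lies in the $c\delta$-neighbourhood of $\gamma$ for some $c \leq 10.5$, and to add the two distances.

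\textbf{Step 3: controlling $\fix{g,d_0}$.} This is where $\norm g > 8\delta$ is used, and it is the main obstacle. Take $y \in \fix{g,d_0}$ and let $q$ be its projection on $\gamma$. Since $\norm g > 8\delta$, the projections of $y$ and $gy$ on $\gamma$ are far apart: $gq$ is $20\delta$-close to $\gamma$ shifted by $\snorm g \geq \norm g - 16\delta$. We then apply \autoref{res: proj qc}(2) to $\gamma$ (with $\alpha = 3\delta$, $\epsilon = 7\delta$) to $y$ and $gy$. When the projections are more than $\epsilon$ apart, this yields
\begin{equation*}
	\dist y{gy} \geq 2\dist yq + \dist q{q'} - 14\delta,
\end{equation*}
where $q'$ is the projection of $gy$. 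Combining $\dist q{q'} \gtrsim \norm g - O(\delta)$ with $\dist y{gy} \leq \norm g + 5\delta$ bounds $\dist yq$ by a constant multiple of $\delta$. The delicate part is bookkeeping the constants to land exactly on $13\delta$. It may be more economical to skip Step 2 and do Step 3 directly for arbitrary $d$:
\begin{equation*}
	d \geq 2\dist xp + \snorm g - O(\delta), \qquad \snorm g \geq \norm g - 16\delta,
\end{equation*}
which gives $\dist xp \leq \tfrac12(d-\norm g) + O(\delta)$. I would adjust whichever route yields the stated constant $13\delta$.
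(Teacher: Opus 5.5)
There is a genuine gap, and it sits in Step~3, which is where the content of the lemma lies. Step~2 is a correct use of \autoref{res: fixed point set}\ref{enu: fixed point set - dist}. Step~1 only shows that $\fix{g,d_0}$ \emph{meets} a neighbourhood of $\gamma$, not that it is contained in one.

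\textbf{Step 3 does not go through.} You assert that, because $\norm g > 8\delta$, the projections $q,q'$ of $y$ and $gy$ on $\gamma$ are far apart. This is unsupported. The only link you use between the two translation lengths is $\snorm g \geq \norm g - 16\delta$, and for $\norm g > 8\delta$ this gives no positive lower bound at all. So you cannot ensure $\dist q{q'} > \epsilon = 7\delta$, and in the complementary case \autoref{res: proj qc} says nothing about $\dist yq$.

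Even when it applies, \autoref{res: proj qc} only gives $\dist y{gy} \geq \dist yq + \dist{gy}{q'} + \dist q{q'} - 14\delta$. You then silently replace $\dist{gy}{q'} = d(gy,\gamma)$ by $\dist yq$. But $\gamma$ is only $g$-invariant up to $20\delta$ by \eqref{eqn: translating biinfinite geodesics}, and $q'$ is a projection on $\gamma$ whereas $gq$ is a projection on $g\gamma$. So both this substitution and the bound $\dist q{q'} \gtrsim \snorm g$ cost errors of order $20\delta$ or more, and they require $\snorm g$ to be large compared with these errors.

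\textbf{The constant cannot come out right.} Even granting the substitution and a lossless bound $\dist q{q'} \geq \snorm g$, your estimates fall short of $13\delta$. In the direct route, the two losses $2\epsilon = 14\delta$ and $16\delta$ already give $\frac12(d-\norm g) + 15\delta$. In the two-step route you need $\fix{g,\norm g + 5\delta}$ inside the $10.5\delta$-neighbourhood of $\gamma$. That is sharper than what the lemma itself asserts for $d = \norm g + 5\delta$ (namely $15.5\delta$), while the same bookkeeping gives at best $17.5\delta$.

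\textbf{What the paper does, and a possible fix.} The paper does not reprove the statement. It quotes \cite[lem~2.9]{Coulon:2018vp} for $d > \norm g$ and obtains $d = \norm g$ by continuity, since $\fix{g,\norm g} = \bigcap_{d > \norm g}\fix{g,d}$.

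For a self-contained proof, exploit $\norm g > 8\delta$ on the orbit of $x$ rather than through $\snorm g$. Work in the triangle with sides $\geo x{gx}$, $g\geo x{gx}$ and a geodesic from $x$ to $g^2x$.
\begin{enumerate}
\item Comparing the midpoint of $\geo x{gx}$ with its image under $g$ rules out $\dist x{g^2x} \leq \dist x{gx}$, since that case would give $\norm g \leq 4\delta$.
\item Comparing instead the point at distance $\gro{g^{-1}x}{gx}x$ from $x$ with its image yields $\dist x{g^2x} \geq \dist x{gx} + \norm g - 4\delta$, that is, $\gro{g^{-1}x}{gx}x \leq \frac12(\dist x{gx} - \norm g) + 2\delta$.
\item The hypothesis $\norm g > 8\delta$ is exactly what makes $\dist x{gx}$ exceed twice this Gromov product by more than $4\delta$. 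An induction with the four point inequality along the broken line $(g^jx)_{j\in\Z}$ then bounds $\gro{g^{-n}x}{g^nx}x$ uniformly in $n$.
\item Passing to the limit places $x$ within $\frac12(d-\norm g) + O(\delta)$ of $\gamma$, losing only a few $\delta$ at each stage.
\end{enumerate}
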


\begin{proof}
	If $d > \norm g$, the statement is proved in \cite[lem~2.9]{Coulon:2018vp}.
	The general case follows by continuity.
\end{proof}

\begin{lemm}
\label{res: fixed point set elliptic}
	Let $g$ be a hyperbolic element of $G$.
	Let $\gamma \colon \R \to X$ be a bi-infinite geodesic from $\xi_g$ to $\xi_g'$.
	Let $u$ be an elliptic element which commutes with $g$.
	Then $\gamma$ is contained in $\fix{u, 25\delta}$.
\end{lemm}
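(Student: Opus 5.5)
The plan is to exploit the fact that $u$ commutes with $g$, so that the characteristic subset of $u$ is $g$-invariant, and to combine this with the quasi-convexity statements in \autoref{res: fixed point set}.

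\textbf{Step 1: a $g$-invariant piece of $\fix{u,d}$.} Since $u$ is elliptic, some orbit of $u$ is bounded, so $\fix{u,d}$ is non-empty for $d$ large. Take $d = \max\{\norm u, 5\delta\}$, or slightly more than $\norm u$ if needed. Proper actions with bounded orbits give $\norm u \le$ (something), but we want the fixed bound $25\delta$, so a better choice is to work with $d=5\delta$ directly. Properness together with finiteness of $\group u$ gives a quasi-centre of an orbit moved by at most $5\delta$ (a standard fact: a bounded set has a $4\delta$-quasi-centre set of diameter at most $5\delta$). Hence $Z=\fix{u,5\delta}$ is non-empty and $8\delta$-quasiconvex by \autoref{res: fixed point set}~\ref{enu: fixed point set - qc}. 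Because $gu=ug$, we have $gZ = Z$. Thus $Z$ is a non-empty, $g$-invariant, $8\delta$-quasiconvex subset (replace it by its closure if needed, which only costs a negligible constant).

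\textbf{Step 2: $Z$ contains points close to $\gamma$ at both ends.} Pick $z\in Z$. Then $g^n z\in Z$ for all $n\in\Z$, and $g^nz \to \xi_g'$, $g^{-n}z\to\xi_g$. Since $Z$ is $8\delta$-quasiconvex, geodesics $[g^{-n}z,g^nz]$ lie in the $8\delta$-neighborhood of $Z$; more precisely, for any $x$, $d(x,Z)\le \gro{g^{-n}z}{g^nz}x+8\delta$. For a point $x=\gamma(t)$, the Gromov product $\gro{g^{-n}z}{g^nz}{\gamma(t)}$ tends, as $n\to\infty$, to within $2\delta$ of $\gro{\xi_g}{\xi_g'}{\gamma(t)}$ by \eqref{eqn: estimate gromov product at infinity}, and the latter is $0$ up to a few $\delta$ since $\gamma(t)$ lies on a geodesic joining the two points. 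So $d(\gamma(t),Z)\le 8\delta+2\delta+O(\delta)$, say at most about $10\delta$.

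\textbf{Step 3: transfer the displacement bound.} If $y\in Z$ with $\dist{\gamma(t)}y\le 10\delta$, then
\[
\dist{\gamma(t)}{u\gamma(t)}\le 2\dist{\gamma(t)}{y}+\dist y{uy}\le 20\delta+5\delta = 25\delta,
\]
so $\gamma(t)\in\fix{u,25\delta}$.

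\textbf{Main obstacle.} The obstacle is keeping the constants within the budget of $25\delta$. This requires two things. First, $\fix{u,5\delta}$ must be non-empty for an elliptic $u$ (the quasi-centre argument), or else we must run the argument with $d$ slightly above $\norm u$ and check that $\norm u\le 5\delta$ for elliptics. Second, the Gromov product at infinity against points of the geodesic $\gamma$ must be sharp enough to give $d(\gamma(t),Z)\le 10\delta$. For the latter I would first try the estimate $\gro{\xi_g}{\xi_g'}{\gamma(t)}\le 0$ from the boundary extension (taking $\gamma(\pm s)$ as approximating sequences), then apply the four point inequality to compare it with $g^{\pm n}z$. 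If the bookkeeping overshoots, I would instead use projections, via \autoref{res: proj qc} and \autoref{res: proj vs gromov product}, to locate points of $Z$ near $\gamma(t)$.
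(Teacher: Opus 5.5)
Your proposal is correct and is essentially the paper's proof. You use that $\fix{u,5\delta}$ is $g$-invariant and $8\delta$-quasiconvex, apply the quasiconvexity inequality to $g^{-n}z,g^nz$, and combine it with \eqref{eqn: estimate gromov product at infinity} and $\gro{\xi_g}{\xi_g'}{\gamma(t)}=0$ (obtained, as you note, from the approximating sequences $\gamma(\pm s)$) to get $d(\gamma(t),\fix{u,5\delta})\le 10\delta$; the triangle inequality then gives $25\delta$. Your justification that $\fix{u,5\delta}\neq\emptyset$, which the paper leaves implicit, is a valid addition, though its cleaner form is that the centres of a $u$-orbit (which exist since $X$ is proper) form a $u$-invariant set of diameter at most $4\delta$.
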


\begin{proof}
	% \begin{com}DTW: should this be: 
    
 %    Let $x$ be an arbitrary point on $\gamma$. Then $\gro{\xi_g}{\xi_g'}x = 0$. \\
 %    RC. Works for me.
 %    \end{com}
    Let $x$ be an arbitrary point on $\gamma$. 
    Then $\gro{\xi_g}{\xi_g'}x = 0$.
	The set $Y = \fix{u, 5\delta}$ is $\group g$-invariant and $8\delta$-quasiconvex (\autoref{res: fixed point set}).
	By quasiconvexity, 
	\begin{equation*}
		d(x,Y) \leq \gro{g^{-n}y}{g^n y}x + 8\delta, \quad \forall y \in Y,\  \forall n \in \N.
	\end{equation*}
	Passing to the limit gives $d(x,Y) \leq 10\delta$, hence $x \in \fix{u, 25\delta}$.
\end{proof}

% \begin{com}DTW:
% 	This proves the conclusion for the chosen point $x\in\gamma$, but the lemma asserts  every point of $\gamma$ lies in $\fix{u,25\delta}$.
% 	Should the proof begin with an arbitrary $x\in\gamma$?
% 	If so, does $\gro{\xi_g}{\xi_g'}x=0$ holds for every point of the chosen bi-infinite geodesic under our conventions, or whether the constant $25\delta$ should be enlarged. \\
% 	RC. $\gro{\xi_g}{\xi_g'}x = 0$ is a consequence of being on $\gamma$.
% \end{com}

\begin{lemm}
\label{res: density fixed points}
	Let $U, V \subset \bar{X}$ be two open subsets intersecting $\Lambda(G)$.
	There is $g \in G$ such that $g(\partial X \setminus V) \subset U$.
\end{lemm}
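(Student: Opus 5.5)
Let $\Lambda(G)$ denote the limit set. If $\Lambda(G)$ has at most two points (the elementary case), I would treat it separately. Otherwise $G$ is non-elementary, and I will use two standard facts. First, the pairs $(\xi_g, \xi'_g)$ of fixed points of hyperbolic elements are dense in $\Lambda(G)\times\Lambda(G)$. Second, north--south dynamics: if $g$ is hyperbolic, then for every neighborhood $U'$ of $\xi'_g$ and $V'$ of $\xi_g$, we have $g^n(\partial X\setminus V')\subset U'$ for all $n$ large enough. The plan is to choose a hyperbolic $g$ with attracting point $\xi'_g\in U$ and repelling point $\xi_g\in V$, and then take a large power of $g$.

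The first step is density of attracting points. Since $U$ is open and meets $\Lambda(G)$, it contains a point $\eta\in\Lambda(G)$. Pick any hyperbolic $h$; one exists because $G$ is non-elementary. Since $\Lambda(G)$ is the closure of the orbit of any of its points, and it is perfect, there is $a\in G$ with $a\xi'_h\in U$. Then $aha^{-1}$ is hyperbolic with attracting point $a\xi'_h\in U$. The same argument gives a hyperbolic $k$ with repelling point in $V$.

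The second step is to get both conditions for a single element. Given hyperbolic $h_1$ with $\xi'_{h_1}\in U$ and $h_2$ with $\xi_{h_2}\in V$, I would arrange that their four fixed points are distinct, conjugating slightly if needed, which is possible since $\Lambda(G)$ is perfect. Then set $g=h_1^m h_2^{-m}$ for $m$ large. A ping-pong argument on small neighborhoods $U_1\subset U$ of $\xi'_{h_1}$ and $V_1\subset V$ of $\xi_{h_2}$ shows that $g$ maps $\partial X\setminus V_1$ into $U_1$ once $m$ is large. Indeed, $h_2^{-m}$ sends the complement of $V_1$ into a small neighborhood of $\xi'_{h_2}$ ... wait, this needs care with directions. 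I will instead take $g=h_1^m h_2^m$: $h_2^m$ maps $\partial X\setminus V_1$ into a small neighborhood $W$ of $\xi'_{h_2}$, and $W$ avoids a neighborhood of $\xi_{h_1}$, so $h_1^m$ maps $W$ into $U_1$. No further hyperbolicity claim is even needed, since only the inclusion $g(\partial X\setminus V)\subset U$ is required.

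The north--south dynamics must be justified quantitatively in $\bar X$ using the sets $V_\xi(x,L)$. If $z\notin V_{\xi_h}(x,L)$, then $\gro{\xi_h}{z}{x}\le L$. Using \eqref{eqn: translating biinfinite geodesics}, $h^m x$ lies near $\gamma(m\snorm h)$, and the four point inequality shows that $\gro{h^m z}{\xi'_h}{x}$ grows linearly in $m$. I expect this estimate to be the main technical obstacle. The fallback is to quote the classical fact from Gromov or Coornaert--Delzant--Papadopoulos.

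The elementary case needs its own treatment. When $\Lambda(G)$ is a single point, it lies in $U\cap V$ and $g=1$ fails in general, so I would need to restrict the statement or take the standard convention. When $\Lambda(G)$ consists of two points, a hyperbolic element $g$ or its inverse works if the attracting point lies in $U$ and the repelling point lies in $V$; otherwise some power of $g$ or an element swapping the two points handles it.
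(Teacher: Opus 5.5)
Your argument is correct when $G$ is non-elementary, which is the setting where the paper uses the lemma. It is organised differently from the paper's proof, however. The paper quotes Tukia \cite[thm~2R]{Tukia:1994uk} to obtain a single hyperbolic $h$ with repelling point in $V$ and attracting point in $U$, and then takes a large power of $h$.

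The first ``standard fact'' you list, density of fixed-point pairs in $\Lambda(G)\times\Lambda(G)$, is precisely that theorem. If you allow it, the lemma is immediate and your two steps are redundant. What your steps actually do is avoid it. You use only minimality of $\Lambda(G)$, to place the attracting point of one hyperbolic element in $U$ and the repelling point of another in $V$, together with the north--south dynamics of each element separately. The composite $h_1^m h_2^m$ then gives the required inclusion without your needing to know that it is hyperbolic. This is more self-contained; it is essentially the usual proof of Tukia's density statement. The paper's version is shorter, and its extra output (a hyperbolic $g$) is never used, since only the inclusion $g(\partial X\setminus V)\subset U$ matters when the lemma is applied to conjugate $H$ and $H'$.

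Two points should be corrected.

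First, in your second step you only need $\xi'_{h_2}\neq\xi_{h_1}$, not four distinct points. ``Conjugating slightly'' is not a justification as it stands, because conjugating $h_1$ moves both of its fixed points while you must keep the attracting one in $U$. The cleanest fix is to insert an element. The orbit $G\xi'_{h_2}$ is dense in the infinite set $\Lambda(G)$, so there is $c\in G$ with $c\,\xi'_{h_2}\neq\xi_{h_1}$. Then $g=h_1^m c\,h_2^m$ works by your argument:
\begin{itemize}
\item $h_2^m$ pushes $\partial X\setminus V$ into a small neighbourhood $W$ of $\xi'_{h_2}$;
\item by continuity of $c$, the set $cW$ avoids a neighbourhood $W'$ of $\xi_{h_1}$;
\item $h_1^m(\partial X\setminus W')\subset U$ for $m$ large.
\end{itemize}

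Second, your last paragraph is wrong in the two-point case, and no argument can repair it. Take $G=\langle h\rangle$ with $h$ hyperbolic, and let $U=V$ be a small open neighbourhood of $\xi_h$ missing $\xi'_h$. Every $g\in G$ fixes $\xi'_h\in\partial X\setminus V$, yet $\xi'_h\notin U$, and there is no element exchanging the two endpoints. So you should state the non-elementary hypothesis explicitly rather than claim the elementary case. The hypothesis is implicit in the paper and is required by Tukia's theorem. In the one-point case, by contrast, the statement does hold by the convergence property of the action on $\partial X$. Any sequence of distinct elements has a subsequence converging to the limit point uniformly on the compact set $\partial X\setminus V$, so no restriction is needed there.
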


\begin{proof}
	According to \cite[thm~2R]{Tukia:1994uk} there is a hyperbolic element $h \in G$ whose repelling and attracting points belong to $V$ and $U$ respectively.
	% \begin{com}DTW:
	% Are the attracting/repelling directions reversed?
	% A large positive power of $h$ sends points away from the repelling point and toward the attracting point.
	% Thus, to obtain $g(\partial X\setminus V)\subset U$, should the attracting point lie in $U$ and the repelling point lie in $V$?
	% Equivalently, if the fixed points are chosen as written, one needs a large negative power...\\
	% RC. Fixed.
	% \end{com}
	Since $h$ acts on $\bar{X}$ with a North-South dynamics, it suffices to take $g$ to be a sufficiently large power of $h$.
\end{proof}

\begin{lemm}
\label{res: lower bound displacement}
    Let $\rho \colon \R_+ \to X$ be a geodesic ray from $x \in X$ to $\xi \in \partial X$.
    Let $\ell \in \R_+$ and set $x_\ell = \rho(\ell)$.
%     \begin{com}DTW:
% Should instead  be: 
% ``Let $\ell > 4\delta$ and set $x_\ell = \rho(\ell)$.''
% The proof later uses $\gro \xi yx\ge \ell-4\delta$ to conclude that
% $x\notin Y=\fix{h,d}$. This holds when $\ell>4\delta$, and still supports later ping-pong application, where $\ell\ge\ell_0>28\delta$.
% \end{com}
    Let $h \in G$ be a hyperbolic isometry such that $\xi_h$ and $\xi_h'$ belong to $V_\xi(x, \ell + 13\delta)$.
    If $\dist{hx_\ell}{x_\ell} > 32\delta$, then
    \begin{equation*}
        \dist x{hx} \geq \dist x{x_\ell} + \dist{x_\ell}{hx_\ell} + \dist{hx_\ell}{hx} - 30\delta.
    \end{equation*}
\end{lemm}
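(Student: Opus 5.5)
The plan is to project $x$ onto the characteristic subset $Y = \fix{h,d}$ for a well chosen $d$, and to apply \autoref{res: fixed point set}~\ref{enu: fixed point set - dist} to $x$. This gives
\begin{equation*}
	\dist x{hx} \geq 2 d(x,Y) + d - 10\delta.
\end{equation*}
Thus it suffices to show two things: that $x \notin Y$, and that $d(x,Y)$ is roughly $\ell$, with $d$ roughly $\dist{x_\ell}{hx_\ell}$ minus twice the distance from $x_\ell$ to $Y$. Since $\dist{x_\ell}{hx_\ell} > 32\delta$, I would take $d = \max\{\norm h, 5\delta\}$, or simply $d = 5\delta$ adjusted so that $Y$ is nonempty. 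More concretely, let $p$ be a projection of $x_\ell$ on $Y$. Then \autoref{res: fixed point set}~\ref{enu: fixed point set - dist} applied at $x_\ell$ gives $\dist{x_\ell}{hx_\ell} \geq 2\dist{x_\ell}p + d - 10\delta$.

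First I would locate $Y$. By \autoref{res: fixed point set}~\ref{enu: fixed point set - intersection} applied to a bi-infinite geodesic $\gamma$ from $\xi_h$ to $\xi'_h$ (which is $3\delta$-quasiconvex and roughly $\group h$-invariant; otherwise one uses the $20\delta$ estimate \eqref{eqn: translating biinfinite geodesics}), the set $Y$ meets a bounded neighborhood of $\gamma$. Its endpoints lie in $V_\xi(x,\ell+13\delta)$. By \autoref{res: convex hull in neighborhood} and the limit estimate \eqref{eqn: estimate gromov product at infinity}, every point of $\gamma$ therefore lies in $V_\xi(x,\ell+7\delta)$ or so. Hence points $y$ of $Y$ near $\gamma$ satisfy $\gro \xi yx \geq \ell + O(\delta)$.

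By \autoref{res: proj vs gromov product}, the projection of such $y$ on $\rho$ lies at distance at least $\ell + O(\delta)$ from $x$, that is, beyond $x_\ell$. Thin-triangle reasoning then shows that the geodesic from $x$ to $y$ passes within $O(\delta)$ of $x_\ell$, so $\gro x y{x_\ell} = O(\delta)$. The same argument applied to $hx$ and $hy$ (noting that $Y$ is $h$-invariant) shows that $x_\ell$ and $hx_\ell$ both lie near geodesics from $x$, respectively $hx$, to $Y$. In particular $x \notin Y$, because $d(x,Y) \approx \ell + d(x_\ell,Y)$, and $d(x_\ell, Y) > 0$ follows from $\dist{x_\ell}{hx_\ell} > 32\delta \geq d$.

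Combining these, one gets $d(x,Y) \geq \dist x{x_\ell} + d(x_\ell,Y) - O(\delta)$, using \autoref{res: proj qc}: the projection $q$ of $x$ on $Y$ satisfies $\gro x{y}{q} \leq 8\delta$, and $x_\ell$ lies near $\geo xq$. Plugging this into the displacement estimate at $x$, together with the upper bound $\dist{x_\ell}{hx_\ell} \leq 2 d(x_\ell,Y) + d + O(\delta)$ (via the triangle inequality through $p$ and $hp$), yields
\begin{equation*}
	\dist x{hx} \geq 2\ell + \dist{x_\ell}{hx_\ell} - O(\delta),
\end{equation*}
which is the claim since $\dist{x}{x_\ell} = \dist{hx_\ell}{hx} = \ell$. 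The main obstacle is keeping the constant at $30\delta$. I expect this to require choosing $d$ carefully (probably $d = 5\delta$ when $\norm h$ is small, and otherwise working directly with $\gamma$ via \autoref{res: fixed point set large translation}) and tracking the Gromov-product losses in the step showing that $x_\ell$ lies near $\geo xq$. If needed, I would instead argue directly with the four point inequality applied to $x, x_\ell, hx_\ell, hx$, using $\gro{x}{hx_\ell}{x_\ell} \leq O(\delta)$ and the symmetric bound.
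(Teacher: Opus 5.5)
Your endgame is sound. Suppose you know that $x_\ell$ is close to a geodesic from $x$ to its projection $q$ on $Y=\fix{h,d}$, i.e.\ $\gro xq{x_\ell}\leq 5\delta$. Then your route through a projection $p$ of $x_\ell$ and the bound $\dist{x_\ell}{hx_\ell}\leq 2d(x_\ell,Y)+d$ gives exactly the constant $30\delta$, as does the paper's route through $q$ and $hq$.

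\textbf{The gap.} You never prove this localization of $q$. That $Y$ meets a bounded neighborhood of $\gamma$ says nothing about where $q$ lies. When $\norm h$ is small, $\fix{h,d}$ is not contained in any uniformly bounded neighborhood of the axis; in $\mathbb H^2$ it is a tube whose radius tends to infinity as $\norm h\to 0$. It can therefore extend back past $x_\ell$ towards $x$. Then $q$ is indeed close to $\geo xy$, but it sits between $x$ and $x_\ell$, so $x_\ell$ is not close to $\geo xq$. Nothing in your sketch rules this out.

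It cannot be ruled out without the hypothesis $\dist{hx_\ell}{x_\ell}>32\delta$. Take $G=\group h$ acting on $\mathbb H^2$, with the axis of $h$ perpendicular to $\rho$ far beyond $x_\ell$, and $\norm h$ so small that $\dist x{hx}\leq\delta$. Every other hypothesis holds, yet the conclusion fails once $\ell>16\delta$. In your proposal that hypothesis is only used to get $d(x_\ell,Y)>0$. This is not the information needed, and its justification $32\delta\geq d$ is false when $\norm h>32\delta$.

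\textbf{The missing idea.} The paper splits into two cases, with $d=\max\{\norm h,5\delta\}$; note that $\fix{h,5\delta}$ is empty when $\norm h>5\delta$.

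If $\norm h>8\delta$, \autoref{res: fixed point set large translation} puts all of $Y$ in the $13\delta$-neighborhood of $\gamma$. Then \autoref{res: convex hull in neighborhood} gives $\gro\xi qx\geq\ell-4\delta$ for $q$ itself.

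If $\norm h\leq 8\delta$, argue by contradiction. Suppose $\gro\xi qx<\ell-4\delta$.
\begin{itemize}
\item The four point inequality at $x_\ell$, applied to $\xi,\xi_h,q,x$, forces $\gro{\xi_h}q{x_\ell}\leq 2\delta$.
\item Since $Y$ is $\group h$-invariant and $8\delta$-quasiconvex, pushing $q$ towards $\xi_h$ by powers of $h^{-1}$ shows that $x_\ell$ lies in the $12\delta$-neighborhood of $Y$.
\item Hence $\dist{hx_\ell}{x_\ell}\leq d+24\delta\leq 32\delta$, a contradiction.
\end{itemize}
This is exactly where the threshold $32\delta$ comes from.

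Finally, $\gro\xi qx\geq\ell-4\delta$ yields $\gro xq{x_\ell}\leq 5\delta$ along the ray. The constant $30\delta$ is then $10\delta$ from item~\ref{enu: fixed point set - dist} of \autoref{res: fixed point set}, plus $2\cdot 5\delta$ at each of $x_\ell$ and $hx_\ell$.
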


\begin{proof} 
	Let $\gamma \colon \R \to X$ be a bi-infinite geodesic from $\xi_h$ to $\xi'_h$.
	For simplicity we set $Y = \fix{h,d}$, where $d = \max\{ \norm h, 5 \delta\}$.
	Let $y$ be a projection of $x$ on $Y$.
	We claim that $\gro \xi yx \geq \ell - 4\delta$.
	Suppose first that $\norm h > 8\delta$ so that $Y$ is contained in the $13\delta$-neighborhood of $\gamma$ (\autoref{res: fixed point set large translation}).
	In this case, our claim is a consequence of \autoref{res: convex hull in neighborhood}.
	Suppose now that $\norm h \leq 8\delta$.
	% \begin{com}DTW:
	% 	Could we include this four-point argument, or isolate it as a short lemma?
	% 	This claim is used in the estimate in the ping-pong proof, so one should see explicitly why the projection $y$ of $x$ to 
	% 	$\fix{h,\max\{\|h\|,5\delta\})}$ satisfies $\gro{\xi}{y}{x}\ge \ell-4\delta$ in the case $\|h\|\le 8\delta$. \\
	% 	RC. Expanded with the original proof.
	% \end{com}
	Assume also that our claim fails.
	Since the point $x_\ell$ lies on $\rho$, the product $\gro \xi x{x_\ell}$ vanishes.
	Applying the four point inequality, we get
	\begin{equation}
	\label{eqn: lower bound displacement}
		\min\left\{ \gro \xi{\xi_h}{x_\ell}, \gro {\xi_h}y{x_\ell}, \gro yx{x_\ell} \right\} \leq \gro \xi x{x_\ell} + 2\delta \leq 2\delta.
	\end{equation}
	On the one hand, it follows from the triangle inequality that
	\begin{equation*}
		\gro \xi{\xi_h}{x_\ell} \geq \gro \xi{\xi_h}x - \dist x{x_\ell} > 2 \delta.		
	\end{equation*}
	On the other hand, since we assume that our claim is false, the triangle inequality combined with (\ref{eqn: estimate gromov product at infinity}) yields
	\begin{equation*}
		\gro yx{x_\ell} \geq \dist x{x_\ell} - \gro \xi yx - \gro \xi x{x_\ell} - 2\delta > 2 \delta
	\end{equation*}
	Therefore the minimum in (\ref{eqn: lower bound displacement}) can only be achieved by the second term so that $\gro {\xi_h}y{x_\ell} \leq 2\delta$.
	The set $Y$ is $\group h$-invariant and $8\delta$-quasiconvex (\autoref{res: fixed point set}).
	Consequently $x_\ell$ belongs to the $12\delta$-neighborhood of $Y$.
	Hence $\dist {hx_\ell}{x_\ell} \leq d + 24\delta$.
	In particular, our assumption forces $\norm h > 8\delta$, a contradiction.
	
	Note that $\gro {\rho(t)} x{x_\ell} = 0$, whenever $t \geq \ell$.
	By the four point inequality -- see for instance \cite[lem~2.2(1)]{Coulon:2014fr} -- we get
	\begin{align*}
		\gro xy{x_\ell} & \leq \max \left\{ \dist x{x_\ell} - \gro {\rho(t)}yx, \gro {\rho(t)}x{x_\ell} \right\} + \delta, \\
		& \leq \max \left\{ \ell - \gro {\rho(t)}yx, 0 \right\} + \delta.
	\end{align*}
	Passing to the limit as $t$ tends to infinity, we get from our claim that $\gro xy{x_\ell} \leq 5\delta$.
	% \begin{com}DTW:
	% 	Please check the constant $5\delta$ here.
	% 	The preceding claim gives $\gro{\xi}{y}{x}\ge \ell-4\delta$, while \autoref{res: proj vs gromov product} has a $7\delta$ error.
	% 	Perhaps the constant should be enlarged? Is the implication obvious?\\
	% 	RC. Expanded with the original proof.
	% \end{com}
	Note that $x$ does not belong to $Y$ since $\gro \xi yx \geq \ell - 4\delta$.
According to \autoref{res: fixed point set}\ref{enu: fixed point set - dist}, since $y$ is a projection of $x$ on $Y=\fix{h,d}$, we have
% \begin{com}DTW:
% 	The previous displayed equation (commented out) had $\dist{hy}{hy}$, which is zero.
% 	We need the
% 	projection $y$ of $x$ onto $Y=\fix{h,d}$:
% 	\[
% 	\dist{hx}{x}\ge 2d(x,Y)+d-10\delta.
% 	\]
% 	Since $\dist{hx}{hy}=\dist{x}{y}=d(x,Y)$ and
% 	$\dist{hy}{y}\le d$, this gives the displayed inequality below. \\

%     RC. Correct. This was a typo.
% 	\end{com}
\begin{equation*}
		\dist {hx}{x} \geq \dist {hx}{hy} + \dist {hy}{y} + \dist{y}{x} - 10\delta.
\end{equation*}
% old version
%	According to \autoref{res: fixed point set}, 	\begin{equation*}		\dist {hx}x \geq \dist {hy}{hy} + \dist {hy}y + \dist yx - 10\delta.	\end{equation*}
	Together with the inequality $\gro xy{x_\ell} \leq 5\delta$, it yields
	\begin{equation*}
		\dist {hx}x \geq \dist {hx}{hx_\ell} + \dist{hx_\ell}{hy}  + \dist {hy}y + \dist y{x_\ell} + \dist {x_\ell}x - 30\delta.
	\end{equation*}
	The conclusion now follows from the triangle inequality.
\end{proof}

Given a subgroup $H$ of $G$, the \emph{limit set} of $H$, denoted $\Lambda(H)$, is the set of accumulation points in $\bar{X}$ of some (hence any) orbit of $H$.
The subgroup $H$ is \emph{elementary} if $\Lambda(H)$ contains at most two points.

\subsection{Semicontinuity of growth rates}

We now review some properties of growth rates related to conformal densities.
We refer the reader to Coornaert \cite{Coornaert:1993uv} and Coulon \cite{Coulon:2024pa}.

\medskip\noindent
{\bfseries Horocompactification.}
Although, there is a perfectly suitable notion of quasiconformal densities on the Gromov boundary, we prefer to work here with the horoboundary, where conformality is easier to state.
We assume that $X$ is a proper, geodesic, hyperbolic, metric space.
A \emph{cocycle} is a map $c \colon X \times X \to \R$ such that $c(x,z) = c(x,y) + c(y,z)$ for every $x,y,z \in X$.
We denote by $C^*(X)$ the set of continuous cocycles with the topology of uniform convergence on compact subsets of $X$.
We write $\iota \colon X \to C^*(X)$ for the map sending the point $z \in X$ to the cocycle $b_z$ defined as $b_z(x,y) =  \dist xz - \dist yz$, for all $x,y \in X$.
It induces a homeomorphism from $X$ onto its image.
The \emph{horocompactification} $\bar{X}_h$ of $X$ is the closure of $\iota(X)$ in $C^*(X)$.
The \emph{horoboundary} is $\partial_h X = \bar{X}_h \setminus \iota(X)$.
The action of $G$ on $X$ extends to a continuous action of $G$ on $\bar X_h$ preserving $\partial_hX$.
Moreover there is a natural $G$-equivariant surjective map from $\partial_h X \onto \partial X$ such that two cocycles $c$ and $c'$ have the same image if and only if they differ by a bounded cocycle.

\medskip\noindent
{\bfseries Conformal densities.}
Let $H$ be a group acting properly by isometries on $X$ and $\omega \in \R_+$.
A density is a collection $\nu = (\nu_x)_{x \in X}$ of positive measures on $\partial_hX$ such that $\nu_x \ll \nu_y$, for every $x, y \in X$.
Such a density is 
\begin{itemize}
	\item $\omega$-\emph{conformal}, if for every $x,y \in X$ we have
	\begin{equation*}
		\frac{d\nu_x}{d\nu_y} (c) = e^{-\omega c(x,y)}, \quad \nu_y\text{-almost everywhere.}
	\end{equation*}
	\item \emph{$H$-invariant}, if $h_\ast \nu_x = \nu_{hx}$, for every $h \in H$ and $x \in X$.
\end{itemize}
If $H$ is non-elementary, Patterson's construction provides an $H$-invariant, $\omega_H$-conformal density.
Conversely, any $H$-invariant, $\omega$-conformal density satisfies $\omega \geq \omega_H$.

\medskip\noindent
{\bfseries The space of subgroups.}
Given a countable group $G$, we denote by $\mathrm{Sub}(G)$ the set of all its subgroups, endowed with the Chabauty topology.

\begin{lemm}
\label{res: semicontinuity}
	Let $X$ be a proper geodesic hyperbolic metric space.
	Let $G$ be a group acting properly by isometries on $X$.
	Then the map $\mathrm{Sub}(G) \to \R_+$ sending $H$ to $\omega(H,X)$ is lower semicontinuous at every non-elementary subgroup.
\end{lemm}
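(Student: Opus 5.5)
We argue by contradiction with Patterson–Sullivan densities, using the preliminaries on the horocompactification recalled above. Let $H$ be a non-elementary subgroup and $(H_n)$ a sequence of subgroups converging to $H$ in the Chabauty topology. Suppose $\liminf_n \omega(H_n,X) < \omega(H,X)$. After passing to a subsequence, $\omega_n = \omega(H_n,X)$ converges to some $\omega < \omega_H$. The goal is to build an $H$-invariant, $\omega$-conformal density, which contradicts the fact that any such density has exponent at least $\omega_H$.

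\textbf{Densities on $H_n$.} Since $H$ is non-elementary, it contains two hyperbolic elements $g_1, g_2$ with disjoint fixed point pairs. Chabauty convergence means every finite subset of $H$ lies in $H_n$ for $n$ large, so $g_1, g_2 \in H_n$ eventually and $H_n$ is non-elementary. Patterson's construction then gives an $H_n$-invariant, $\omega_n$-conformal density $\nu^n$. Fix a base point $o$ and normalize so that $\nu^n_o$ is a probability measure on the compact space $\partial_h X$.

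\textbf{Passing to the limit.} Extract a subsequence such that $\nu^n_o$ converges weakly to a probability measure $\nu_o$. Define $\nu_x$ by $d\nu_x(c) = e^{-\omega c(x,o)}d\nu_o(c)$. The cocycles in $\bar X_h$ are $1$-Lipschitz in each variable, so $c \mapsto e^{-\omega_n c(x,o)}$ converges uniformly to $c \mapsto e^{-\omega c(x,o)}$ on $\bar X_h$. Hence $\nu^n_x \to \nu_x$ weakly for every $x$, and $\nu$ is $\omega$-conformal.

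\textbf{Invariance.} For $h\in H$, we have $h \in H_n$ for $n$ large, so $h_*\nu^n_x = \nu^n_{hx}$. Since $h$ acts continuously on $\bar X_h$, push-forward commutes with weak limits, and we get $h_*\nu_x = \nu_{hx}$.

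\textbf{Main obstacle.} The delicate point is making sure the limit measure lives on $\partial_h X$ and not partly on $\iota(X)$. If mass escapes to $\iota(X)$, the limit is not a density on the boundary, and the inequality $\omega \ge \omega_H$ cannot be applied directly. Two ways to handle this:

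\begin{enumerate}
	\item Since $\omega_n$ stays bounded away from $\omega_H$, one could try to choose the Patterson measures supported on $\partial_h X$ (for instance, by taking the limits in their construction with a uniform choice) and then argue weak compactness on the closed set $\partial_h X$. The Patterson density is supported on the boundary by construction, and $\partial_h X$ is closed in $\bar X_h$, so weak limits of measures supported there remain supported there. This is the first thing I would try, and I believe it resolves the issue.
	\item Alternatively, use the shadow lemma, or simply the fact that a finite $H$-invariant $\omega$-conformal measure on $\bar X_h$ restricts to one on the boundary. If a positive proportion sat inside $X$, $H$-invariance together with properness of the action would force infinite mass.
\end{enumerate}

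With the limit density on $\partial_h X$, the contradiction $\omega \ge \omega_H$ completes the proof.
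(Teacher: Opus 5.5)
Your proof is correct and follows the same route as the paper: take $\omega_k$-conformal, $H_k$-invariant Patterson densities normalized so that $\nu^k_o$ has mass one, extract a weak-* limit which is $\omega$-conformal with $\omega = \liminf_k \omega(H_k,X)$, and use Chabauty convergence ($h \in H_k$ for all large $k$) to get $H$-invariance, hence $\omega \geq \omega_H$. Your extra details fill in points the paper leaves implicit. One is that $H_k$ is eventually non-elementary, so Patterson's construction applies. The other is that the limit stays on the closed set $\partial_h X$, so your option (1) already settles the ``obstacle'' and option (2) is unnecessary.
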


\begin{proof}
	The argument follows the proof of \cite[thm~7.7]{McMullen:1999ha}.
	Consider a sequence $(H_k)$ of subgroups of $G$ converging to $H$.
	For every $k \in \N$, let $\nu^k = (\nu^k_x)$ be an $\omega_k$-conformal, $H_k$-invariant density, where $\omega_k = \omega(H_k, X)$.
	Without loss of generality, we can assume that $\nu^k$ is normalized to that the total mass of $\nu^k_o$ is $1$.
	Let $\omega = \liminf_{k \to \infty} \omega_k$.
	Up to passing to a subsequence, $\nu^k$ converges for the weak-* topology to an $\omega$-conformal density $\nu = (\nu_x)$.
	% \begin{com}DTW:
	% 	We should normalize the densities before taking a weak-* limit. Maybe require  $\nu^k_o(\partial_hX)=1$...?
	% 	Otherwise the limit could be zero, and the compactness statement is not immediate.\\
	% 	RC. Indeed, added a comment.
	% \end{com}
	For any $h \in H$, there exists $k_0$ such that $h \in H_k$ for all $k \geq k_0$, hence $h_* \nu_x = \nu_{hx}$.
	Thus $\nu$ is $H$-invariant, so $\omega \geq \omega_H$.
\end{proof}

\subsection{Application to growth spectra}

In this section, $X$ is a proper geodesic $\delta$-hyperbolic metric space and $G$ a group acting properly by isometries on $X$.
We write
\begin{equation*}
    \omega(\mathcal{H}, X) = \sup\set{\omega(H,X)}{H \in \mathcal{H}}.
\end{equation*}
As for growth rates, if there is no ambiguity, we simply denote this bound by $\omega_{\mathcal H}$.

\begin{defi}
\label{def: growth controlled}
A collection $\mathcal{H}$ of subgroups of $G$ is \emph{growth controlled} if for every $H, H' \in \mathcal{H}$ and every $\omega \in \R_+$ with $\omega > \max\{\omega(H,X), \omega(H',X)\}$, there exist $g \in G$ and a subgroup $M \in \mathcal{H}$ containing $\group{H, gH'g^{-1}}$ such that $\omega(M,X) < \omega$.
\end{defi}

\begin{rema}
\label{rem: growth control}
If $\mathcal{H}$ is invariant under conjugation, it is equivalent to ask for the existence of $M \in \mathcal{H}$ containing conjugates of $H$ and $H'$ with $\omega(M,X) < \omega$.
\end{rema}

We consider the following classes: $\mathcal{L}$ is the set of all subgroups of $G$ whose limit set is properly contained in $\Lambda(G)$, and $\mathcal{F}$ is the collection of all free, quasiconvex subgroups with infinite index in $G$.

\begin{prop}
\label{res: free product}
The classes $\mathcal{L}$ and $\mathcal{F}$ are growth controlled.
\end{prop}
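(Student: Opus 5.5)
The plan is to take $M = \group{H, gH'g^{-1}}$, where $g \in G$ is a ``far-away'' element chosen by ping-pong, so that $M$ is the free product $H * gH'g^{-1}$. I will then bound the Poincar\'e series of $M$ by a geometric series whose ratio can be made as small as we like.

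\emph{Choice of $g$.} Fix $H, H'$ in the class and $\omega > \max\{\omega_H, \omega_{H'}\}$, and pick $s$ with $\max\{\omega_H,\omega_{H'}\} < s < \omega$. Then $\mathcal P_H(s,x)$ and $\mathcal P_{H'}(s,x)$ are finite.
\begin{itemize}
\item Since $\Lambda(H)$ and $\Lambda(H')$ are proper closed subsets of $\Lambda(G)$, choose $\xi \in \Lambda(G)\setminus\Lambda(H)$ and $\eta \in \Lambda(G)\setminus \Lambda(H')$, together with small neighborhoods $U = V_\xi(x,\ell+13\delta)$ and $V = V_\eta(x,\ell+13\delta)$. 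For $\ell$ large, $U$ is disjoint from a neighborhood of $\Lambda(H)$ and $V$ is disjoint from a neighborhood of $\Lambda(H')$.
\item By \autoref{res: density fixed points} there is $g$, which we may take to be a large power of a hyperbolic element with attracting point near $\xi$, such that $g(\partial X\setminus V)\subset U$. In particular $g\Lambda(H') \subset U$.
\item I would also reserve a third nonempty open set $W$ meeting $\Lambda(G)$, disjoint from $U$ and from a neighborhood of $\Lambda(H)\cup g\Lambda(H')$. Such a $W$ exists because $\Lambda(G)$ is perfect when $G$ is non-elementary; the elementary case is handled directly.
\end{itemize}

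\emph{Ping-pong and the displacement estimate.} Since $H$ and $H'$ act properly, only finitely many elements of $H$ (resp.\ $H'$) fail to send the ``ping-pong region'' associated with $U$ (resp.\ $V$) into the complementary one. By enlarging $\ell$, and the power used for $g$, we arrange that every nontrivial $h\in H$ maps a neighborhood of $U$ into a neighborhood of $\Lambda(H)$ away from $U$. We arrange the analogous property for $gH'g^{-1}$ with respect to $U$. The ping-pong lemma then gives $M = H * gH'g^{-1}$.

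The key quantitative step is an analogue of \autoref{res: lower bound displacement} for alternating products $m = h_1 (g h'_1 g^{-1}) h_2 \cdots$ with $n$ syllables from the $gH'g^{-1}$ factor. Using the Morse-type concatenation estimate that underlies \autoref{res: lower bound displacement}, with Gromov products $\le 5\delta$ at each junction point, I expect
\[
\dist x{mx} \;\geq\; \sum_i \dist x{h_ix} + \sum_i \dist x{h'_ix} + 2n\ell' - Cn,
\]
where $\ell' \to \infty$ as $\ell \to\infty$ and $C$ depends only on $\delta$. Consequently
\[
\mathcal P_M(s,x) \le \sum_{n\ge 0} \Big(e^{-s(2\ell'-C)}\,\mathcal P_H(s,x)\,\mathcal P_{H'}(s,x)\Big)^n \cdot \mathcal P_H(s,x).
\]
Choosing $\ell$ so large that the ratio is $<1$ makes this series converge, so $\omega_M \le s < \omega$.

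\emph{Membership of $M$ in the class.} By ping-pong, $\Lambda(M)$ lies in the closure of the $M$-orbit of $\Lambda(H)\cup g\Lambda(H')$ together with the limit points of reduced words, all of which lie outside $W$. Hence $\Lambda(M) \subsetneq \Lambda(G)$, which gives $M\in\mathcal L$. For $\mathcal F$, the pieces $H,H'$ are free, so $M$ is free. Quasiconvexity of $M$ follows from the same displacement inequality: orbits of alternating words are uniform quasigeodesics built from quasigeodesic pieces, via the Morse lemma (a combination-theorem argument). Infinite index follows because $\Lambda(M)$ misses $W\cap\Lambda(G)\neq\emptyset$, whereas a finite index subgroup has full limit set.

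The main obstacle is making the displacement estimate uniform. We must control the finitely many short elements of $H$ and $H'$ (which may have small displacement, or even be elliptic when $H\in\mathcal L$), and ensure that every junction in an alternating word genuinely has a small Gromov product. To handle this, I would first pass from $U,V$ to the shadow sets $V_\xi(x,\ell)$ and use \autoref{res: proj vs gromov product} and \autoref{res: convex hull in neighborhood} to convert boundary separation into Gromov-product bounds at $x$. I would then choose $\ell$ only after fixing $s$, since the number of exceptional short elements depends on $\ell$ only through properness.
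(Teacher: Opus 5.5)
There is a genuine gap, and it lies in the step you call the main obstacle: torsion in the class $\mathcal L$. Your ping-pong requires every nontrivial $h\in H$ to move $U$ off itself, and every nontrivial $h'\in H'$ to move $V$ off itself. Enlarging $\ell$ or the power of $g$ only handles elements with $h\xi\neq\xi$. Properness shows that the stabiliser of $\xi$ in $H$ is finite, but not that it is trivial. It is never trivial when $H$ meets the maximal finite normal subgroup $F$ of $G$, because $F$ acts trivially on $\Lambda(G)$. So no choice of $\xi\in\Lambda(G)\setminus\Lambda(H)$ can help.

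For a concrete failure, let $G=\langle a,b\rangle\times\langle c\rangle$ with $\langle a,b\rangle$ free and $c$ of order two, acting on a Cayley graph. Let $H=H'=\langle a,c\rangle\in\mathcal L$. For every $g$, the alternating word $(gag^{-1})\,c\,(ga^{-1}g^{-1})$ has nontrivial syllables but represents $c$ itself, whose displacement is bounded independently of $g$. So $M$ is never $H*gH'g^{-1}$. Your displacement inequality would give this word displacement at least $4\ell'-O(1)$, so it is false, and the Poincar\'e series bound built on it collapses. Elliptic elements of $H$ outside $F$ that happen to fix your arbitrarily chosen $\xi$ cause the same problem.

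The missing idea is to amalgamate over $F$ instead of forming a free product, and to choose the ping-pong geometry so that $F$ is the only obstruction. The paper does this as follows:
\begin{enumerate}
\item It replaces $H,H'$ by $\langle H,F\rangle$ and $\langle H',F\rangle$, which have the same limit sets and growth rates.
\item It takes a hyperbolic $g$ with $E^+(g)\cong F\rtimes\langle g\rangle$, and fixes $\ell_0$ so that any element moving two points of the axis $\gamma$ at distance at least $\ell_0$ by at most $500\delta$ lies in $F$.
\item It conjugates the hulls of the limit sets into the opposite shadows $V_\xi(x,\ell_0)$ and $V_{\xi'}(x,\ell_0)$, then pushes them further by $g^{\mp n}$.
\end{enumerate}
This yields $L\cap L'=F$, $M\cong L*_FL'$, and displacement greater than $32\delta$ at $y=\gamma(-\ell_1)$ for every element of $L\setminus F$. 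Alternating words with syllables in $L\setminus F$ and $L'\setminus F$ then cover $M$ up to right multiplication by $F$, which costs only a factor $|F|$ in the Poincar\'e series.

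For $\mathcal F$ your route is a genuinely different but essentially sound Klein combination: you use precisely invariant neighbourhoods of points outside the two limit sets, rather than conjugating both subgroups into opposite shadows of a single axis. It works because nontrivial elements of a free convex-cocompact subgroup are hyperbolic with both fixed points in $\Lambda(H)$, so none of them fixes $\xi$. You should still say why infinite index forces $\Lambda(H)\neq\Lambda(G)$ for such $H$.

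Finally, your argument that $\Lambda(M)$ misses $W$ needs the choices in the right order. Your ping-pong region for $H$ must contain every translate $hU$ with $h\neq 1$. The finitely many such translates that are not close to $\Lambda(H)$ may meet $W$. To avoid this, fix $W$ first, around a point of $\Lambda(G)$ outside $\Lambda(H)\cup H\xi$, and shrink $U$ afterwards.
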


\begin{proof}
	% \begin{com}
	% 	RC. Restored the original proof. Hopefully it adresses Dani's remarks.
	% \end{com}
	The proof is based on a classical ping-pong argument following \cite{Gitik:1999cx,Martinez-Pedroza:2009aa}.
	We start by defining some auxiliary objects.
	Denote by $F$ the maximal elliptic normal subgroup of $G$.
	Such a subgroup exists and is finite since the action of $G$ on $X$ is proper.
	In addition one can find a hyperbolic element $g \in G$ with $\norm g > 1000\delta$, such that the maximal elementary subgroup $E(g)$ of $G$ containing $g$ contains a subgroup $E^+(g)$ of index at most two in $E(g)$ and isomorphic to $F \rtimes \group g$.
	Indeed, although the action of $G$ is not cocompact, the construction goes as in \cite[lem~8]{Arzhantseva:2006cb}, see also \cite[lem~2.8]{Fujiwara:2024ra}.
	For simplicity we let $(\xi, \xi') = (\xi_g, \xi_g')$.
	We fix a bi-infinite geodesic $\gamma \colon \R \to X$ from $\xi$ to $\xi'$ and set $x = \gamma(0)$.
	
	Since $G$ is non-elementary, there is $\ell_0 > 28\delta$ such that $\Lambda(G)$ is \emph{not} contained in the closure of $V_\xi(x, \ell_0) \sqcup V_{\xi'}(x,\ell_0)$.
	Up to increasing the value of $\ell_0$ we can also assume that the following holds: for every $t,t' \in \R$ with $\abs{t'-t} \geq \ell_0$, for every element $u \in G$, if $u$ moves $\gamma(t)$ and $\gamma(t')$ by at most $500\delta$, then $u$ belongs to $F$.
	Indeed, if $\ell_0$ is sufficiently large (compare to $\norm g$) then such an element $u \in G$ must belong to $E^+(g)$.
	However elements of $F$ moves $\gamma$ by at most $25\delta$  (\autoref{res: fixed point set elliptic}), while $\norm g > 1000\delta$, whence the claim.
	
	\medskip
	Let us first prove the result for the class $\mathcal L$.
	Consider $H, H' \in \mathcal L$.
	The union of all geodesics joining two distinct points of $\Lambda(H)$ is an $H$-invariant, $6\delta$-quasi-convex subset.
	We write $Y$ for the set of points which are $20\delta$-close to such a geodesic. 
	It is closed, $2\delta$-quasiconvex, and $H$-invariant.
	% \begin{com}
	% 	RC. Detailed the construction of $Y$ to answer Dani's comment below.
	% \end{com}
	The set $Y'$ is defined form $\Lambda(H')$ in the same way.
	We denote by $Y$ and $Y'$ the sets of points which are $10\delta$-close to a bi-infinite geodesic joining two distinct points of $\Lambda(H)$ and $\Lambda(H')$ respectively.
	They are closed, $2\delta$-quasiconvex, and invariant under the action of $H$ and $H'$ respectively.
	Observe that $\mathcal L$ is invariant by conjugation.
	Moreover the growth rate of a subgroup is invariant under conjugation.
	According to \autoref{res: density fixed points}, we can assume, up to replacing $H$ and $H'$ by conjugates, that 
	\begin{equation*}
		Y\subset V_\xi(x, \ell_0)
		\quad \text{and} \quad
		Y' \subset V_{\xi'}(x, \ell_0).
	\end{equation*}
	The points $q$ and $q'$ stand for projections of $x$ on $Y$ and $Y'$ respectively.
	Note that $H$ is contained as a finite index subgroup in $H_0 = \group{H,F}$.
	In particular, $H_0$ and $H$ have the same growth rate and the same limit set.
	Moreover $H_0$ preserves $Y$.
	Similarly we let $H'_0 = \group{H',F}$.

	\medskip
	Let $\ell \in [\ell_0, \infty)$.
	Fix $\ell_1 > \ell + 33\delta$ and $\ell_2 > \ell_1 + \ell_0 + 7\delta$.
	Set
	\begin{equation*}
		y = \gamma(-\ell_1) \quad
		\quad \text{and} \quad 
		y' = \gamma(\ell_1).
	\end{equation*}
	There is $n \in \N$ such that
	\begin{equation*}
		g^{-n}Y \subset V_\xi(x, \ell_2), \quad
		g^nY' \subset V_{\xi'}(x, \ell_2)
		\quad \text{and} \quad 
		n \snorm g > \ell _1.
	\end{equation*}
	For simplicity we let $L = g^{-n} H_0 g^n$ and $L' = g^n H'_0 g^{-n}$.
	We are going to prove (among other things) the following facts: 
	\begin{enumerate}
		\item $L \cap L'=F$;
		\item the subgroup $M = \group {L,L'}$ is isomorphic to $L \ast_F L'$;
		\item the growth rate of $M$ is bounded above by a function of $\ell$ that converges to $\max\{\omega_H, \omega_{H'}\}$ as $\ell$ tends to infinity.
	\end{enumerate}

	\begin{clai}
		$L \cap L' =F$.
	\end{clai}
% \begin{com}DTW:
% There may be a constant issue here.
% \autoref{res: fixed point set}\ref{enu: fixed point set - intersection} gives 
% $\fix{u,5\delta}$ intersects a bounded neighborhood of $g^{-n}Y$.
% it does not give a point in $g^{-n}Y\cap\fix{u,5\delta}$.
% Likewise in Claim~\ref{cla: disp}.
% We either need to carry the nearby point and enlarge the constants below, or quote a stronger statement. \\
% RC. Fixed. $Y$ is already the neighborhood of an invariant quasi-convex subset. Hence there is no need to thicken again.
% Stressed this fact when we define $Y$.
% \end{com}
	\begin{proof}
		For simplicity we set $K = L \cap L'$.
		The inclusion $F \subset K$ follows from our construction.
		By construction $\Lambda(L)$ and $\Lambda(L')$ have an empty intersection.
		Hence $\Lambda(K) = \emptyset$, that is $K$ is elliptic.
		Let $u \in K$.
		By construction $g^{-n}Y$ is a suitable neighborhood an $L$-invariant, quasi-convex subset.
		According to \autoref{res: fixed point set}\ref{enu: fixed point set - intersection} there is a point $z \in g^{-n}Y \cap \fix{u, 5\delta}$.
		We denote by $p = \gamma(t)$ the projection of $z$ on $\gamma$.
		The points $z' \in g^nY'$ and $p' = \gamma(t')$ are defined in a similar way.
		It follows for our choice of $n$ combined with \autoref{res: proj vs gromov product} that $t \leq - \ell_2 + 7\delta$ and $t' \geq \ell_2 - 7\delta$.
		Hence, $\dist {p}{p'} > \max\{ \ell_0, 7\delta\}$.
		Since geodesics are $3\delta$-quasiconvex, we get $\gro {z}{z'}{p} \leq 7\delta$ and $\gro {z}{z'}{p'} \leq 7\delta$ (\autoref{res: proj qc}), so that  $p$ and $p'$ belongs to the $15\delta$-neighborhood of $ \fix{u, 5\delta}$ thus to to $\fix{u, 35\delta}$.
		According to our choice of $\ell_0$, it implies that $u$ is contained in $F$.
	\end{proof}

	\begin{clai}
	\label{cla: disp}
		For every $h \in L \setminus F$, we have $\dist {hy}{y} > 32\delta$.
	\end{clai}

	\begin{proof}
		Let $h \in L$ such that $\dist {hy}{y} \leq 32\delta$.
		In particular $\norm h \leq 32\delta$.
		As previously we observe that there is a point $z$ in $g^{-n}Y \cap \fix{h, 32\delta}$.
		Moreover its projection $p = \gamma(t)$ on $\gamma$ is such that $t \leq - \ell_2 + 7\delta$.
		In particular, $\dist py \geq \ell_0$.
		Since geodesics are $3\delta$-quasiconvex, we know that $\gro zyp \leq 3\delta$ (\autoref{res: proj qc}), hence $p$ and $y$ belong to the $11\delta$-neighborhood of $ \fix{h, 32\delta}$, and thus to $\fix{h, 54\delta}$.
		It follows from our choice $\ell_0$, that $h \in F$.
	\end{proof}
	
	\begin{clai}
	\label{cla: proj}
		Any projection of $y$ onto $g^{-n}Y$ is $5\delta$-close to $g^{-n}q$.
	\end{clai}
	
	\begin{proof}
		After translation by $g^n$ it suffices to prove that any projection $p$ of $g^ny$ on $Y$ is $5\delta$-close to $q$.
		Suppose on the contrary that it is not the case.
		The point $p$ and $q$ are respective projections of $g^ny$ and $x$ on $Y$.
		Since $Y$ is $2\delta$-quasiconvex, we get $\gro x{g^ny}q \leq 5\delta$ (\autoref{res: proj qc}).
		According to (\ref{eqn: translating biinfinite geodesics}) the point $g^ny$ is $20\delta$-close to $z = \gamma(-\ell_1+ T)$ where $T = n\snorm g$.
		Consequently,
		\begin{equation*}
			\gro xzq \leq \gro x{g^ny}q + 20\delta \leq 25\delta.
		\end{equation*}
		However geodesics are $3\delta$-quasiconvex, hence $q$ is $28\delta$-close to $\gamma$ restricted to $\intval 0{T-\ell _1}$.
		Hence $\gro \xi qx \leq 28\delta$.
		This contradicts the fact that as an element of $Y$ the point $q$ belongs to $V_\xi(x, \ell_0)$.
	\end{proof}

	\begin{rema}
		Claims analogous to Claims~\ref{cla: proj} and \ref{cla: disp} also holds for $L'$.
	\end{rema}

	\begin{clai}
	\label{cla: free product}
		Let $k \geq 1$.
%		Consider an element $h \in L$ of the form
		Consider a reduced alternating product
		\begin{equation*}
			h = h_1h_2\cdots h_k
		\end{equation*}
		where $h_1$, $h_2$, \dots, $h_k$ alternately belong to $L \setminus F$ and $L' \setminus F$.
		Then the following holds:
		\begin{enumerate}
			\item \label{enu: free product - neighborhood}
			$hx$ belongs to $V_\xi(x,\ell) \sqcup V_{\xi'}(x, \ell)$,
			\item \label{enu: free product - distance}
			$\displaystyle \dist x{hx} \geq \sum_{j = 1}^k \dist {h_jy_j}{y_j}  + 2k\ell$, \\ where $y_j$ is either $y$ or $y'$ depending whether $h_j$ belongs to $L$ or $L'$.
		\end{enumerate}
	\end{clai}
	
	\begin{proof}
		We define a sequence of points $x_0, x_1, \dots, x_k$ by letting $x_j = h_1\cdots h_jx$, with the convention that $x_0 = x$.
		In particular, $x_k = hx$.
		According to \autoref{res: lower bound displacement} we have
		\begin{equation}
		\label{eqn: free product - dist}
			\dist {x_j}{x_{j-1}} \geq \dist{h_jx}x > \dist{h_jy_j}{y_j} + 2 \ell  + 4\delta, \quad \forall j \in \intvald 1k,
		\end{equation}
		where $y_j$ is either $y$ or $y'$ depending whether $h_j$ belongs to $L$ or $L'$.
		\autoref{res: lower bound displacement} combined with the triangle inequality also yields
		\begin{equation}
		\label{eqn: free product - aux}
			\max\left\{ \gro{h_jx}x{y_j} , \gro{h_j^{-1}x}x{y_j} \right\}\leq 15\delta, \quad \forall j \in \intvald 1k.
		\end{equation}
		Fix $j \in \intvald 1{k-1}$.
		Because the $h_j$'s alternate between $L$ and $L'$, we have
		\begin{equation*}
			\gro{y_j}{y_{j+1}}x = \gro y{y'}x = 0.
		\end{equation*}
		Applying twice the four point inequality gives
		\begin{equation*}
			\min \left\{ \gro{y_j}{h_j^{-1}x}x, \gro{h_j^{-1}x}{h_{j+1}x}x, \gro{h_{j+1}x}{y_{j+1}}x \right\} \leq \gro{y_j}{y_{j+1}}x + 2\delta \leq 2\delta.
		\end{equation*}
    However, the minimum cannot be achieved by $\gro{y_j} {h_j^{-1}x}x$.
		Indeed in view of (\ref{eqn: free product - aux}) we have
		\begin{equation*}
			\gro{y_j}{h_j^{-1}x}x \geq \dist x{y_j} - \gro x{h_j^{-1}x}{y_j} \geq \ell_1 - 15\delta > 2\delta.
		\end{equation*}
		Similarly it cannot be achieved by $ \gro{h_{j+1}x}{y_{j+1}}x$.
		Hence
		\begin{equation*}
			\gro{h_j^{-1}x}{h_{j+1}x}x \leq  2\delta.
		\end{equation*}
		After translation this means that 
		\begin{equation}
		\label{eqn: free product - gromov}
			\gro{x_{j-1}}{x_{j+1}}{x_j} \leq 2\delta, \quad \forall j \in \intvald 1{k-1}.
		\end{equation}
		Point~\ref{enu: free product - distance} now follows from a proof by induction using (\ref{eqn: free product - dist}) and (\ref{eqn: free product - gromov}).
		Note that the sequence $x_0$, \dots, $x_k$ satisfies the assumption of \cite[lem~1]{Arzhantseva:2006cb}.
		Therefore $x_1 = h_1x$ is $16\delta$-close to any geodesic from $x$ to $hx$.
		Suppose that $y_1 = y$ (the other case is symmetric).
		By the triangle inequality, we have
		\begin{equation}
		\label{eqn: free product - final}
			\gro y\xi x \leq \gro {hx}\xi x + \gro{hx}x{h_1x} + \gro {h_1x}xy.
		\end{equation}
		According to the previous discussion, $\gro{hx}x{h_1x} \leq 16\delta$.
		Moreover by (\ref{eqn: free product - aux}) we have $ \gro {h_1x}xy \leq 15\delta$.
  %       \begin{com}DTW: this better?
%
  %       		It follows from (\ref{eqn: estimate gromov product at infinity}) that $\gro y \xi x \geq \ell_1 - 2\delta$.
		% Hence
		% $$
		% 	\gro {hx}\xi x
		% 	\geq \ell_1 - 2\delta - 16\delta - 15\delta
		% 	= \ell_1 - 33\delta
		% 	> \ell .
		% $$
		% Thus $hx$ belongs to $V_\xi(x,\ell)$, which completes the proof of \ref{enu: free product - neighborhood}. \\
%
  %       RC. Done.
  %       \end{com}
		It follows from (\ref{eqn: estimate gromov product at infinity}) that $\gro y \xi x \geq \ell_1 - 2\delta$.
        Plugin in these estimates in (\ref{eqn: free product - final}) we get 
        \begin{equation*}
            \gro {hx}\xi x
			\geq \ell_1 - 33\delta
			> \ell.
        \end{equation*}
		Thus $hx$ belongs to $ V_\xi(x, \ell)$ which completes the proof of \ref{enu: free product - neighborhood}.
	\end{proof}

	Recall that $F$ moves $x$ by at most $25\delta$ (\autoref{res: fixed point set elliptic}).
	\autoref{cla: free product}\ref{enu: free product - neighborhood} shows in particular that the element $h$ we considered does not belong to $F$.
	Since $F$ is normal in $G$, any element in $M$ can be written $hu$ with $h$ as in \autoref{cla: free product} and $u \in F$.
	It follows that $M$ is isomorphic to $L \ast_F L'$.
	Moreover the limit set of $M$ is contained in the closure of $V_\xi(x,\ell) \cup V_{\xi'}(x, \ell)$.
	It follows from our choice of $\ell_0$ that $\Lambda(M)$ is properly contained in $\Lambda(G)$, that is $M\in\mathcal L$.
	
	Let us now estimate the growth rate of $M$.
	As we observed, any element in $M$ can be written $hu$ with $h = h_1\cdots h_k$ as in \autoref{cla: free product} and $u \in F$.
	Note that this decomposition is not unique. 
	Consequently we will be over counting some elements in $M$.
	However this will not affect our final result.
	Following \autoref{cla: free product}\ref{enu: free product - distance} we get
	\begin{equation*}
		\dist {hux}x \geq \dist {hx}x - 25\delta \geq \sum_{j = 1}^k \dist{h_jy_j}{y_j} + 2k\ell - 25\delta
	\end{equation*}
	where $y_j$ is either $y$ or $y'$ depending whether $h_j$ belongs to $L$ or $L'$.
	Grouping the element of $L$ according to their syllable length, we get (with some over counting) the following estimate of the Poincaré series of $M$:
	\begin{equation}
	\label{eqn: free product}
		\mathcal P_M(s,x)
		\leq \card F e^{s(2\ell + 25\delta)} \sum_{k = 0}^\infty \left[ \mathcal P_L(s,y) \mathcal P_{L'}(s,y') e^{-4s\ell} \right]^k,
	\end{equation}
	compare for instance with \cite[Chapter~VI.A, prop~4]{Harpe:2000ab}.
%
% \begin{com}DTW: $L$ preserves $g^{-n}Y$, not $Y$... right? Is
%     this better?
%
%     	Let $p$ be a projection of $y$ onto $g^{-n}Y$, which is $2\delta$-quasiconvex and $L$-invariant.
% 	Since projection onto $g^{-n}Y$ is large scale $1$-Lipschitz (\autoref{res: proj qc}) we get
% 	\begin{equation*}
% 		\mathcal P_L(s,y)  \leq e^{10s\delta} \mathcal P_L(s,p).
% 	\end{equation*}
% 	However $p$ is $5\delta$-close to $g^{-n}q$ (\autoref{cla: proj}).
% 	The triangle inequality now yields
% 	\begin{equation*}
% 		\mathcal P_L(s,y)  
% 		\leq e^{20s\delta} \mathcal P_L(s,g^{-n}q) 
% 		=  e^{20s\delta} \mathcal P_{H_0}(s,q).
% 	\end{equation*}
%     RC. Fixed.
% \end{com}
    %
	Let $p$ be a projection of $y$ onto $g^{-n}Y$, which is $2\delta$-quasi-convex and $L$-invariant.
	Since projection onto $g^{-n}Y$ is large scale $1$-Lipschitz (\autoref{res: proj qc}) we get
	\begin{equation*}
		\mathcal P_L(s,y)  \leq e^{10s\delta} \mathcal P_L(s,p) 
	\end{equation*}
	However $p$ is $5\delta$-close to $g^{-n}q$ (\autoref{cla: proj}).
	The triangle inequality now yields
	\begin{equation*}
		\mathcal P_L(s,y)  
		\leq e^{20s\delta} \mathcal P_{g^{-n}H_0g^n}(s,g^{-n}q) 
		\leq  e^{20s\delta} \mathcal P_{H_0}(s,q).
	\end{equation*}
	The same argument gives
	\begin{equation*}
		\mathcal P_{L'}(s,y')
		\leq  e^{20s\delta} \mathcal P_{H'_0}(s,q').
	\end{equation*}
	In particular (\ref{eqn: free product}) becomes
	\begin{equation*}
		\mathcal P_M(s,x)
		\leq \card F e^{s(2\ell + 25\delta)} \sum_{k = 0}^\infty \left[ \mathcal P_{H_0}(s,q) \mathcal P_{H'_0}(s,q') e^{-4s(\ell- 10\delta)} \right]^k.
	\end{equation*}
	
	Consider now  $s, \omega \in \R_+$ with 
	\begin{equation*}
		\max \left\{ \omega(H, X), \omega(H', X)\right\} < s < \omega.
	\end{equation*}
	In particular the Poincaré series $\mathcal P_{H_0}(s,q)$ and $ \mathcal P_{H'_0}(s,q')$ are finite.
	Note also that the values of these series do not depend on $\ell$.
	Consequently if $\ell$ is sufficiently large then 
	\begin{equation*}
		\mathcal P_{H_0}(s,q) \mathcal P_{H'_0}(s,q') e^{-4s(\ell- 10\delta)} < 1
	\end{equation*}
	so that $\mathcal P_M(s,x)$ converges.
	Consequently $\omega(M,X) \leq s < \omega$.
 	We already observed that $M$ belongs to $\mathcal L$.
	By construction it contains conjugates of $H$ and $H'$.
	However $\mathcal L$ is invariant under conjugation. 
	Hence $\mathcal L$ is growth controlled (see \autoref{rem: growth control}).
	
	Let us now focus on the class $\mathcal F$.
	Suppose that $H,H' \in \mathcal F$, i.e.\ $H$ and $H'$ are free, quasiconvex with infinite index in $G$.
	In particular $H$ and $H'$ belong to $\mathcal L$.
	Thus we can follow the above construction.
	Consider the subgroup $M_0$ of $M$ generated by $g^{-n}Hg^n$ and $g^nH'g^{-n}$.
	By construction $M$ is generated by $M_0$ and $F$, while $F$ is normal in $M$.
	Consequently $M_0$ is a finite-index subgroup of $M$, hence with infinite index in $G$.
	Moreover $\omega(M_0, X) = \omega(M, X)$.
	Thus it suffices to prove that $M_0$ is free and quasiconvex.
	Since $H$ and $H'$ (and their conjugates) intersect $F$ trivially, \autoref{cla: free product}\ref{enu: free product - neighborhood}, tells us that $M_0$ is isomorphic to $H \ast H'$, whence a free group.
	Moreover, \autoref{cla: free product}\ref{enu: free product - distance} yields that the orbit map induces a quasi-isometric embedding of $M_0$ in $X$. 
	Therefore $M_0$ is quasiconvex.
\end{proof}

\begin{rema}
The same argument proves that the class of all quasiconvex (\resp quasiconvex and virtually free) subgroups of $G$ is growth controlled.
\end{rema}

We now highlight some properties of any growth controlled class of subgroups.

\begin{prop}
\label{res: sup realized}
	Let $\mathcal{H}$ be a growth controlled class of subgroups of $G$ containing a non-elementary subgroup and $\overline{\mathcal{H}}$ its closure in $\mathrm{Sub}(G)$.
	Then $\omega(\mathcal{H},X)$ belongs to $\Spec(\overline{\mathcal{H}},X)$.
\end{prop}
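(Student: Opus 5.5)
We build an increasing chain of subgroups in $\mathcal H$ whose growth rates approach $\omega_{\mathcal H} = \omega(\mathcal H, X)$, and arrange it to converge in $\mathrm{Sub}(G)$. The limit lies in $\overline{\mathcal H}$, and \autoref{res: semicontinuity} will force its growth rate to equal $\omega_{\mathcal H}$.

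Write $\omega_{\mathcal H}$ for the supremum, which is finite since it is at most $\omega_G$. Choose a sequence $(K_n)$ in $\mathcal H$ with $\omega(K_n,X) \to \omega_{\mathcal H}$. We may take $K_0$ non-elementary, since $\mathcal H$ contains a non-elementary subgroup. Fix a sequence $\varepsilon_n \downarrow 0$.

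We now build $M_n \in \mathcal H$ inductively. Set $M_0 = K_0$. Given $M_n$, apply the growth control property (\autoref{def: growth controlled}) to the pair $H = M_n$, $H' = K_{n+1}$ and the threshold
\begin{equation*}
	\omega = \max\{\omega(M_n,X), \omega(K_{n+1},X)\} + \varepsilon_{n+1}.
\end{equation*}
This produces $g_{n+1} \in G$ and $M_{n+1} \in \mathcal H$ with $M_{n+1} \supset \langle M_n, g_{n+1}K_{n+1}g_{n+1}^{-1}\rangle$ and $\omega(M_{n+1},X) < \omega$.

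Two properties of this chain matter. First, it is increasing: $M_0 \subset M_1 \subset \cdots$. Second, conjugation does not change growth rates, so $\omega(M_{n+1}) \geq \omega(K_{n+1})$. Also $\omega(M_{n+1}) \leq \omega_{\mathcal H}$ because $M_{n+1} \in \mathcal H$. Since $\omega(K_n) \to \omega_{\mathcal H}$, we get $\omega(M_n) \to \omega_{\mathcal H}$.

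An increasing union of subgroups converges in the Chabauty topology to $M_\infty = \bigcup_n M_n$, so $M_\infty \in \overline{\mathcal H}$. It contains the non-elementary group $K_0$, so it is non-elementary. Then \autoref{res: semicontinuity} gives
\begin{equation*}
	\omega(M_\infty,X) \leq \liminf_n \omega(M_n,X) = \omega_{\mathcal H}.
\end{equation*}

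For the reverse inequality, use $M_n \subset M_\infty$: the orbit counts of $M_n$ are bounded by those of $M_\infty$, so $\omega(M_\infty) \geq \omega(M_n)$ for every $n$. Hence $\omega(M_\infty) \geq \omega_{\mathcal H}$, and therefore $\omega(M_\infty,X) = \omega_{\mathcal H} \in \Spec(\overline{\mathcal H},X)$.

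The only delicate point is making sure the nested sequence actually converges in $\mathrm{Sub}(G)$. This is automatic for monotone unions, which is why growth control is used to nest the subgroups rather than merely to compare them. Lower semicontinuity then needs only the non-elementary hypothesis, which is supplied by $K_0$.
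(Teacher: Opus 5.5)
Your proof is correct and follows essentially the same argument as the paper. You nest the subgroups via growth control, pass to the increasing union (the Chabauty limit, which lies in $\overline{\mathcal H}$ and is non-elementary because it contains the first group), and squeeze its growth rate between $\sup_n \omega(M_n,X)$ and the lower semicontinuity bound; the only cosmetic difference is your threshold $\max\{\omega(M_n,X),\omega(K_{n+1},X)\}+\varepsilon_{n+1}$, which lets you skip the paper's preliminary reduction to the case where the supremum is not attained (the paper uses the supremum itself as the threshold).
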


\begin{proof}
Write $\omega = \omega(\mathcal{H},X)$.
Without loss of generality, assume $\omega$ is not realized by any subgroup in $\mathcal{H}$.
By definition there is a sequence $(H_n)$ of elements in $\mathcal H$ such that $\omega(H_n, X)$ converges to $\omega$ from below as $n$ tends to infinity.
Without loss of generality, we can assume that $H_0$ is non-elementary.
We build by induction a non-decreasing sequence $(M_n)$ in $\mathcal{H}$.
First set $M_0 = H_0$.
Suppose now that $M_n$ has been defined.
Since $\max\{\omega(M_n,X), \omega(H_{n+1},X)\} < \omega$, growth control provides a group $M_{n+1} \in \mathcal{H}$ containing $\group{M_n, gH_{n+1}g^{-1}}$ for some $g \in G$.

Let $M$ be the union of all the groups $M_n$.
Then $\omega(H_n, X) \leq \omega(M_n, X) \leq \omega(M, X)$, so $\omega \leq \omega(M, X)$.
Observe that $(M_n)$ converges to $M$ in the Chabauty topology, hence $M$ belongs to $\overline{\mathcal{H}}$.
Since $M$ contains $H_0$, it is non-elementary.
Therefore \autoref{res: semicontinuity} gives $\omega(M, X) \leq \omega$.
\end{proof}
% \begin{com}DTW:
% \autoref{res: semicontinuity} is stated for non-elementary subgroups.
% Should  we justify that the limiting subgroup $M=\bigcup_n M_n$ is non-elementary, or should we split off the case $\omega=0$.
% As written, the application of \autoref{res: semicontinuity} is missing this hypothesis. \\
% RC. Good point. I added the assumption.
% \end{com}

Let $\omega \in \R_+^*$ and $\mathcal{H}$ a growth controlled class.
The subclass $\mathcal{H}_\omega = \{H \in \mathcal{H} : \omega(H,X) < \omega\}$ is also growth controlled, yielding:

\begin{coro}
\label{res: closure spectrum growth controlled class}
Let $\mathcal{H}$ be a growth controlled class of subgroups of $G$ consisting of non-elementary subgroups and $\overline{\mathcal{H}}$ its closure in $\mathrm{Sub}(G)$.
Denote by $\Omega$ the closure of $\Spec(\mathcal{H},X)$.
The interior of $\Omega$ is contained in $\Spec(\overline{\mathcal{H}},X)$.
\end{coro}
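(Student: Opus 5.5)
Fix $\omega$ in the interior of $\Omega$; we show $\omega \in \Spec(\overline{\mathcal H},X)$. The plan is to apply \autoref{res: sup realized} to the subclass $\mathcal H_\omega = \{H \in \mathcal H : \omega(H,X) < \omega\}$ and then pass to closures.

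\textbf{Step 1: $\mathcal H_\omega$ is growth controlled.} Let $H,H' \in \mathcal H_\omega$ and let $\omega' > \max\{\omega(H,X),\omega(H',X)\}$. Set $\omega'' = \min\{\omega',\omega\}$. Then $\omega'' > \max\{\omega(H,X),\omega(H',X)\}$ still holds, since both growth rates are $<\omega$. Growth control of $\mathcal H$ applied with $\omega''$ yields $g \in G$ and $M \in \mathcal H$ containing $\group{H, gH'g^{-1}}$ with $\omega(M,X) < \omega'' \leq \omega$. Hence $M \in \mathcal H_\omega$ and $\omega(M,X) < \omega'$, as required.

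\textbf{Step 2: $\mathcal H_\omega$ is nonempty with supremum exactly $\omega$.} Because $\omega$ is interior to $\Omega$, there is $\eta>0$ with $(\omega-\eta,\omega+\eta) \subset \Omega$. Since $\Omega$ is the closure of $\Spec(\mathcal H,X)$, every interval $(\omega-\epsilon,\omega)$ with $0<\epsilon<\eta$ meets $\Spec(\mathcal H,X)$; otherwise that open interval would miss the closure. So there are $H \in \mathcal H$ with $\omega(H,X) \in (\omega-\epsilon,\omega)$. This gives $\mathcal H_\omega \neq \emptyset$ and $\omega(\mathcal H_\omega,X) = \omega$. Every member of $\mathcal H$ is non-elementary by hypothesis, so $\mathcal H_\omega$ contains a non-elementary subgroup.

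\textbf{Step 3: conclude.} By \autoref{res: sup realized} applied to $\mathcal H_\omega$, there is $M$ in the closure $\overline{\mathcal H_\omega}$ with $\omega(M,X) = \omega$. Since $\mathcal H_\omega \subset \mathcal H$, we have $\overline{\mathcal H_\omega} \subset \overline{\mathcal H}$, so $\omega \in \Spec(\overline{\mathcal H},X)$.

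The only delicate point is Step 2: one must check that $\omega$ is approximated from \emph{below} by elements of $\Spec(\mathcal H,X)$, not merely that $\omega \in \Omega$. This is where the interior hypothesis is needed, as opposed to $\omega$ being a closure point. If $\omega = 0$ were interior, the argument would be vacuous, but $\Omega \subset \R_+$, so $0$ is never interior. Steps 1 and 3 are direct.
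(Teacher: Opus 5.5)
Your proof is correct and is essentially the paper's argument: the paper observes that $\mathcal H_\omega = \{H \in \mathcal H : \omega(H,X) < \omega\}$ is again growth controlled and applies \autoref{res: sup realized} to it. You additionally supply the details the paper leaves implicit, namely the $\min\{\omega',\omega\}$ step showing growth control passes to $\mathcal H_\omega$, and the use of interiority to guarantee $\omega(\mathcal H_\omega,X) = \omega$.
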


We complete this section with the proof of \autoref{res: free subgroup spectrum}.

% \begin{com}DTW:
% Maybe this proof should not cite \autoref{res: main new} as a black box, since the surface case below uses
% \autoref{res: free subgroup spectrum}.
% For Point~(1), we should instead apply the free-group density theorem to each convex-cocompact free subgroup and take the union over such subgroups.
% For Point~(2), we should apply \autoref{res: closure spectrum growth controlled class} to the non-elementary part of $\mathcal F$, and then handle the endpoint $0$ separately by cyclic/trivial subgroups. \\
% RC. Done.
% \end{com}
\begin{proof}[Proof of \autoref{res: free subgroup spectrum}]
Let $\epsilon \in \R_+^*$.
By definition, there is a non-abelian free quasi-convex subgroup $F \subset G$ such that $\omega_F \geq \omega_{\mathcal F} - \epsilon$.
Denote by $\mathcal F_0$ the collection of all non-abelian, finitely generated, infinite index subgroups of $F$.
It follows from the density part of  \autoref{res: main new} applied to the free group $F$ acting on $X$ that $\Spec(\mathcal F_0, X)$ is dense in $[0, \omega_F]$.
This fact holds for every $\epsilon \in \R_+^*$.
Hence $\Spec(\mathcal F, X)$ is dense in $[0, \omega_{\mathcal F}]$, which completes the proof of \ref{enu: free subgroup spectrum - density}.

Recall that  $\mathcal F$ is growth controlled (\autoref{res: free product}).
According to \autoref{res: closure spectrum growth controlled class} $(0, \omega_{\mathcal F})$ is contained in $\Spec(G,X)$.
Note that $0$ and $\omega_{\mathcal F}$ also belong to $\Spec(G,X)$.
Indeed $0$ is the growth rate of any cyclic subgroup of $G$.
The case of $\omega_{\mathcal F}$ is covered by \autoref{res: sup realized}.
Hence $[0, \omega_{\mathcal F}]$ is contained in $\Spec(G,X)$ as announced in \ref{enu: free subgroup spectrum - realized}.

We are left to prove \ref{enu: free subgroup spectrum - 1/2}, i.e.\ the inequality $\omega_{\mathcal{F}} \geq \omega_G / 2$.
Let $g \in G$ be a hyperbolic element and $N$ the normal closure of $g^n$ for a suitable large $n$.
By the theory of rotation families \cite{Dahmani:2017ef}, $N$ is a free subgroup, written as an ascending union of finitely generated quasiconvex subgroups $(H_k)$.
Each $H_k \in \mathcal{F}$, so $\omega(H_k, X) \leq \omega_{\mathcal{F}}$.
\autoref{res: semicontinuity} gives $\omega_N \leq \omega_{\mathcal{F}}$.
Since $N$ is an infinite normal subgroup of $G$, it is known that $\omega_N \geq \omega_G / 2$, with strict inequality when $G$ is divergent.
\end{proof}

\subsection{The surface case}

\begin{proof}[Proof of \autoref{res: main new} for surface groups]
Let $G$ be the fundamental group of a closed surface with a proper convex-cocompact action on a geodesic, hyperbolic, metric space $X$.
% \begin{com}
% 	DTW: difference between cocompact and convex-cocompact? \\
% 	RC. Fixed
% \end{com}
In view of \autoref{res: free subgroup spectrum}, it suffices to show that $\omega_{\mathcal{F}} = \omega_G$.
Let $N = [G,G]$ be the derived subgroup of $G$.
Since $G/N$ is abelian, $\omega_G = \omega_N$, see for instance \cite[thm~1.2]{Coulon:2024pa}.
Write $N$ as the union of an increasing sequence $(N_k)$ of finitely generated non-elementary subgroups.
By \autoref{res: semicontinuity}, $\omega(N_k, X)$ converges to $\omega_N$.
Since each $N_k$ is finitely generated with infinite index, it is a free quasiconvex subgroup, hence $\omega_{\mathcal{F}} = \omega_G$.
\end{proof}

\makebiblio{biblio}


@article{timar2026density,
  title={Density of growth rates of subgroups of a free group--an alternative proof},
  author={Tim{\'a}r, {\'A}d{\'a}m},
  journal={arXiv preprint arXiv:2601.12620},
  year={2026}
}

@article{louvaris2024density,
  title={Density of growth-rates of subgroups of a free group and the non-backtracking spectrum of the configuration model},
  author={Louvaris, Michail and Wise, Daniel T and Yehuda, Gal},
  journal={arXiv preprint arXiv:2404.07321},
  year={2024}
}

@book{Konig:1990th,
  title = {Theory of Finite and Infinite Graphs},
  author = {K{\"o}nig, D{\'e}nes},
  year = 1990,
  publisher = {Birkh\"auser Boston, Inc., Boston, MA},
  doi = {10.1007/978-1-4684-8971-2},
  isbn = {978-0-8176-3389-9},
  mrnumber = {1035708}
}

@misc{Coulon:2025cr,
  title = {{Croissance des sous-groupes du groupe libre, d'apr\`es Louvaris, Wise et Yehuda}},
  author = {Coulon, R{\'e}mi},
  year = 2025,
  month = nov,
  journal = {S\'eminaire virtuel francophone Groupes et G\'eom\'etrie},
  urldate = {2026-06-11},
  howpublished = {S\'eminaire virtuel francophone Groupes et G\'eom\'etrie, {https://plmbox.math.cnrs.fr/f/c31c0e8e10c548328a48}},
  langid = {french}
}

@misc{coulonTwistedPattersonSullivanMeasures2025,
  title = {Twisted {{Patterson-Sullivan Measures}} and {{Applications}} to {{Amenability}} and {{Coverings}}},
  author = {Coulon, R{\'e}mi and Dougall, Rhiannon and Schapira, Barbara and Tapie, Samuel},
  year = 2025,
  month = jan,
  journal = {American Mathematical Society},
  series = {Memoirs of the {{American Mathematical Society}}},
  volume = {305},
  number = {1539},
  publisher = {American Mathematical Society},
  issn = {0065-9266, 1947-6221},
  doi = {10.1090/memo/1539},
  urldate = {2025-01-30},
  howpublished = {https://www.ams.org/memo/1539/},
  isbn = {9781470480417 9781470470548},
  langid = {english}
}

@article{Arzhantseva:2006cb,
  title = {A Lower Bound on the Growth of Word Hyperbolic Groups},
  author = {Arzhantseva, Goulnara and Lysenok, Igor G.},
  year = 2006,
  month = feb,
  journal = {Journal of the London Mathematical Society},
  volume = {73},
  number = {1},
  pages = {109--125},
  publisher = {Oxford University Press (OUP)},
  doi = {10.1112/S002461070502257X},
  langid = {english},
  uri = {{$<$}a href="papers3://publication/doi/10.1112/S002461070502257X"{$>$}papers3://publication/doi/10.1112/S002461070502257X{$<$}/a{$>$}}
}

@book{Harpe:2000ab,
  title = {Topics in Geometric Group Theory},
  author = {{de la Harpe}, Pierre},
  year = 2000,
  series = {Chicago Lectures in Mathematics},
  publisher = {University of Chicago Press, Chicago, IL},
  isbn = {0-226-31719-6 0-226-31721-8},
  uri = {{$<$}a href="papers3://publication/uuid/B4E52818-7B9C-4B0C-B9CC-8734339F9F1F"{$>$}papers3://publication/uuid/B4E52818-7B9C-4B0C-B9CC-8734339F9F1F{$<$}/a{$>$}}
}

@article{Fujiwara:2024ra,
  title = {The Rates of Growth in an Acylindrically Hyperbolic Group},
  author = {Fujiwara, Koji},
  year = 2024,
  month = sep,
  journal = {Groups, Geometry, and Dynamics},
  volume = {19},
  number = {1},
  pages = {109--167},
  issn = {1661-7207},
  doi = {10.4171/ggd/820},
  urldate = {2025-02-28},
  langid = {english}
}

@misc{louvarisSubgroupsFreeGroup2025,
  title = {Subgroups of a Free Group with Every Growth Rate},
  author = {Louvaris, Michail and Wise, Daniel T. and Yehuda, Gal},
  year = 2025,
  month = may,
  number = {arXiv:2505.10650},
  eprint = {2505.10650},
  howpublished = {arXiv:2505.10650},
  primaryclass = {math},
  publisher = {arXiv},
  doi = {10.48550/arXiv.2505.10650},
  urldate = {2025-05-19},
  archiveprefix = {arXiv}
}

@book{Serre:1977wy,
  title = {Arbres, Amalgames, {{SL2}}},
  author = {Serre, Jean-Pierre},
  year = 1977,
  volume = {46},
  publisher = {Soci\'et\'e Math\'ematique de France, Paris},
  uri = {{$<$}a href="papers3://publication/uuid/EED907B0-C4BE-49BC-82FD-030AF1F45530"{$>$}papers3://publication/uuid/EED907B0-C4BE-49BC-82FD-030AF1F45530{$<$}/a{$>$}}
}

@article{McMullen:1999ha,
  title = {Hausdorff Dimension and Conformal Dynamics. {{I}}. {{Strong}} Convergence of {{Kleinian}} Groups},
  author = {McMullen, Curtis T.},
  year = 1999,
  month = jan,
  journal = {Journal of Differential Geometry},
  volume = {51},
  number = {3},
  publisher = {International Press of Boston},
  issn = {0022-040X},
  doi = {10.4310/jdg/1214425139},
  urldate = {2025-07-18}
}

@article{Coulon:2018aa,
  title = {Growth Gap in Hyperbolic Groups and Amenability},
  author = {Coulon, R{\'e}mi and Dal'bo, Fran{\c c}oise and Sambusetti, Andrea},
  year = 2018,
  journal = {Geometric and Functional Analysis},
  volume = {28},
  number = {5},
  pages = {1260--1320},
  publisher = {Springer International Publishing},
  doi = {10.1007/s00039-018-0459-6},
  langid = {english},
  uri = {{$<$}a href="papers3://publication/doi/10.1007/s00039-018-0459-6"{$>$}papers3://publication/doi/10.1007/s00039-018-0459-6{$<$}/a{$>$}}
}

@article{Corlette:1990br,
  title = {Hausdorff Dimensions of Limit Sets {{I}}},
  author = {Corlette, Kevin},
  year = 1990,
  month = dec,
  journal = {Inventiones Mathematicae},
  volume = {102},
  number = {1},
  pages = {521--541},
  doi = {10.1007/BF01233439},
  langid = {english},
  uri = {{$<$}a href="papers3://publication/doi/10.1007/BF01233439"{$>$}papers3://publication/doi/10.1007/BF01233439{$<$}/a{$>$}}
}

@article{Coornaert:1993uv,
  title = {Mesures de {{Patterson-Sullivan}} Sur Le Bord d'un Espace Hyperbolique Au Sens de {{Gromov}}},
  author = {Coornaert, Michel},
  year = 1993,
  journal = {Pacific Journal of Mathematics},
  volume = {159},
  number = {2},
  pages = {241--270},
  uri = {{$<$}a href="papers3://publication/uuid/3DCD4242-340D-48D8-89A7-51EF0B8F1B61"{$>$}papers3://publication/uuid/3DCD4242-340D-48D8-89A7-51EF0B8F1B61{$<$}/a{$>$}}
}

@article{Coulon:2024pa,
  title = {Patterson--{{Sullivan}} Theory for Groups with a Strongly Contracting Element},
  author = {Coulon, R{\'e}mi},
  year = 2024,
  month = mar,
  journal = {Ergodic Theory and Dynamical Systems},
  pages = {1--56},
  issn = {0143-3857, 1469-4417},
  doi = {10.1017/etds.2024.10},
  urldate = {2024-03-05},
  langid = {english}
}

@article{Dahmani:2017ef,
  title = {Hyperbolically Embedded Subgroups and Rotating Families in Groups Acting on Hyperbolic Spaces},
  author = {Dahmani, Fran{\c c}ois and Guirardel, Vincent and Osin, Denis V},
  year = 2017,
  month = jan,
  journal = {Memoirs of the American Mathematical Society},
  volume = {245},
  number = {1156},
  publisher = {American Mathematical Society},
  issn = {0065-9266},
  doi = {10.1090/memo/1156},
  isbn = {978-1-4704-2194-6},
  langid = {english},
  uri = {{$<$}a href="papers3://publication/doi/10.1090/memo/1156"{$>$}papers3://publication/doi/10.1090/memo/1156{$<$}/a{$>$}}
}

@article{Tukia:1994uk,
  title = {Convergence Groups and {{Gromov}}'s Metric Hyperbolic Spaces},
  author = {Tukia, Pekka},
  year = 1994,
  journal = {New Zealand Journal of Mathematics},
  volume = {23},
  number = {2},
  pages = {157--187},
  uri = {{$<$}a href="papers3://publication/uuid/4BD455F0-0F45-4AB5-9272-BCDB075E7030"{$>$}papers3://publication/uuid/4BD455F0-0F45-4AB5-9272-BCDB075E7030{$<$}/a{$>$}}
}

@book{Coornaert:1990tj,
  title = {G\'eom\'etrie et Th\'eorie Des Groupes},
  author = {Coornaert, Michel and Delzant, Thomas and Papadopoulos, Athanase},
  year = 1990,
  series = {Lecture Notes in Mathematics},
  volume = {1441},
  publisher = {Springer-Verlag, Berlin},
  isbn = {3-540-52977-2},
  uri = {{$<$}a href="papers3://publication/uuid/D1FF5E83-11D6-4D1C-ABE3-02F3D2C30E0F"{$>$}papers3://publication/uuid/D1FF5E83-11D6-4D1C-ABE3-02F3D2C30E0F{$<$}/a{$>$}}
}

@book{Ghys:1990ki,
  title = {Sur Les Groupes Hyperboliques d'apr\`es Mikhael Gromov},
  author = {Ghys, {\'E}tienne and {de la Harpe}, Pierre},
  editor = {{de la Harpe}, Pierre and Ghys, {\'E}tienne},
  year = 1990,
  series = {Progress in Mathematics},
  volume = {83},
  publisher = {Birkh\"auser Boston, Inc., Boston, MA},
  address = {Boston, MA},
  issn = {0743-1643},
  doi = {10.1007/978-1-4684-9167-8},
  isbn = {0-8176-3508-4},
  uri = {{$<$}a href="papers3://publication/doi/10.1007/978-1-4684-9167-8"{$>$}papers3://publication/doi/10.1007/978-1-4684-9167-8{$<$}/a{$>$}}
}

@book{Bridson:1999ky,
  title = {Metric Spaces of Non-Positive Curvature},
  author = {Bridson, Martin R. and Haefliger, Andr{\'e}},
  year = 1999,
  series = {Grundlehren Der Mathematischen Wissenschaften [{{Fundamental}} Principles of Mathematical Sciences]},
  volume = {319},
  publisher = {Springer-Verlag},
  address = {Berlin},
  issn = {3-540-64324-9},
  doi = {10.1007/978-3-662-12494-9},
  isbn = {3-540-64324-9},
  uri = {{$<$}a href="papers3://publication/doi/10.1007/978-3-662-12494-9"{$>$}papers3://publication/doi/10.1007/978-3-662-12494-9{$<$}/a{$>$}}
}

@incollection{Gromov:1987tk,
  title = {Hyperbolic Groups},
  booktitle = {Essays in Group Theory},
  author = {Gromov, Misha},
  year = 1987,
  pages = {75--263},
  publisher = {Springer, New York},
  address = {New York},
  uri = {{$<$}a href="papers3://publication/uuid/88DBB898-8049-4D49-AC8D-661BC612CA6D"{$>$}papers3://publication/uuid/88DBB898-8049-4D49-AC8D-661BC612CA6D{$<$}/a{$>$}}
}

@article{Coulon:2014fr,
  title = {On the Geometry of {{Burnside}} Quotients of Torsion Free Hyperbolic Groups},
  author = {Coulon, R{\'e}mi},
  year = 2014,
  journal = {International Journal of Algebra and Computation},
  volume = {24},
  number = {3},
  pages = {251--345},
  doi = {10.1142/S0218196714500143},
  langid = {english},
  uri = {{$<$}a href="papers3://publication/doi/10.1142/S0218196714500143"{$>$}papers3://publication/doi/10.1142/S0218196714500143{$<$}/a{$>$}}
}

@misc{Coulon:2018vp,
  title = {Infinite Periodic Groups of Even Exponents},
  author = {Coulon, R{\'e}mi},
  year = 2018,
  month = oct,
  eprint = {1810.08372v2},
  primaryclass = {math.GR},
  affiliation = {IRMAR},
  archiveprefix = {arXiv},
  howpublished = {arXiv 1810.08372},
  uri = {{$<$}a href="papers3://publication/uuid/42F68499-5BD2-4753-8DFA-A3685C512551"{$>$}papers3://publication/uuid/42F68499-5BD2-4753-8DFA-A3685C512551{$<$}/a{$>$}}
}

@article{Gitik:1999cx,
  title = {Ping-Pong on Negatively Curved Groups},
  author = {Gitik, Rita},
  year = 1999,
  journal = {Journal of Algebra},
  volume = {217},
  number = {1},
  pages = {65--72},
  doi = {10.1006/jabr.1998.7789},
  langid = {english},
  uri = {{$<$}a href="papers3://publication/doi/10.1006/jabr.1998.7789"{$>$}papers3://publication/doi/10.1006/jabr.1998.7789{$<$}/a{$>$}}
}

@article{Martinez-Pedroza:2009aa,
  title = {Combination of Quasiconvex Subgroups of Relatively Hyperbolic Groups},
  author = {{Mart{\'i}nez-Pedroza}, Eduardo},
  year = 2009,
  journal = {Groups, Geometry, and Dynamics},
  volume = {3},
  number = {2},
  pages = {317--342},
  issn = {1661-7207},
  doi = {10.4171/GGD/59},
  fjournal = {Groups, Geometry, and Dynamics},
  mrclass = {20F67 (20F65)},
  mrnumber = {2486802},
  mrreviewer = {Goulnara N. Arzhantseva}
}

@article{angel2015non,
  title={The non-backtracking spectrum of the universal cover of a graph},
  author={Angel, Omer and Friedman, Joel and Hoory, Shlomo},
  journal={Transactions of the American Mathematical Society},
  volume={367},
  number={6},
  pages={4287--4318},
  year={2015}
}

@article{bordenave2016lecture,
  title={Lecture notes on random graphs and probabilistic combinatorial optimization!! draft in construction!!},
  journal={in preparation},
  author={Bordenave, Charles},
  year={2016}
}

@article{LiWise2020,
    AUTHOR = {Li, Jiakai and Wise, Daniel T.},
     TITLE = {No growth-gaps for special cube complexes},
   JOURNAL = {Groups Geom. Dyn.},
  FJOURNAL = {Groups, Geometry, and Dynamics},
    VOLUME = {14},
      YEAR = {2020},
    NUMBER = {1},
     PAGES = {117--135},
      ISSN = {1661-7207},
   MRCLASS = {20F65 (20F69 57M05)},
  MRNUMBER = {4077657},
MRREVIEWER = {Michael Hull},
       DOI = {10.4171/ggd/537},
       URL = {https://doi-org.proxy3.library.mcgill.ca/10.4171/ggd/537},
}

@article{DahmaniFuterWise2019,
    AUTHOR = {Dahmani, Fran\c{c}ois and Futer, David and Wise, Daniel T.},
     TITLE = {Growth of quasiconvex subgroups},
   JOURNAL = {Math. Proc. Cambridge Philos. Soc.},
  FJOURNAL = {Mathematical Proceedings of the Cambridge Philosophical
              Society},
    VOLUME = {167},
      YEAR = {2019},
    NUMBER = {3},
     PAGES = {505--530},
      ISSN = {0305-0041},
   MRCLASS = {20F67 (20E08 20F69)},
  MRNUMBER = {4015648},
MRREVIEWER = {Enric Ventura Capell},
       DOI = {10.1017/s0305004118000440},
       URL = {https://doi-org.proxy3.library.mcgill.ca/10.1017/s0305004118000440},
}

@article {AngluinGardiner1981,
    AUTHOR = {Angluin, Dana and Gardiner, A.},
     TITLE = {Finite common coverings of pairs of regular graphs},
   JOURNAL = {J. Combin. Theory Ser. B},
  FJOURNAL = {Journal of Combinatorial Theory. Series B},
    VOLUME = {30},
      YEAR = {1981},
    NUMBER = {2},
     PAGES = {184--187},
      ISSN = {0095-8956},
   MRCLASS = {05C70},
  MRNUMBER = {615312},
MRREVIEWER = {W. D\"{o}rfler},
       DOI = {10.1016/0095-8956(81)90062-9},
       URL = {https://doi-org.proxy3.library.mcgill.ca/10.1016/0095-8956(81)90062-9},
}

@ARTICLE{Stallings83,
   author = {Stallings, John R.},
   title = {Topology of finite graphs},
   journal = {Invent. Math.},
   year = {1983},
   volume = {71},
   number = {3},
   pages = {551--565},
}

@article{dougall2016amenability,
  title={Amenability, critical exponents of subgroups and growth of closed geodesics},
  author={Dougall, Rhiannon and Sharp, Richard},
  journal={Mathematische Annalen},
  volume={365},
  number={3-4},
  pages={1359--1377},
  year={2016},
  publisher={Springer}
}
\end{document}